\def\vp{\varphi}
\def\vs{\vspace{10pt}}
\def\eps{{\varepsilon}}
\def\Prob{{\mathbb{P}}}
\def\Var{{\rm Var}}
\def\EXP{{\mathbb{E}}}
\def\complex{\mathbb{C}}
\def\bbT{\mathbb{T}}
\def\naturals{\mathbb{N}}
\def\reals{\mathbb{R}}
\def\integers{\mathbb{Z}}
\def\bP{\mathbf{P}}
\def\cA{\mathcal{A}}
\def\cB{\mathcal{B}}
\def\cC{\mathcal{C}}
\def\cD{\mathcal{D}}
\def\cF{\mathcal{F}}
\def\cG{\mathcal{G}}
\def\cE{\mathcal{E}}
\def\cL{\mathcal{L}}
\def\cM{\mathcal{M}}
\def\cO{\mathcal{O}}
\def\cV{\mathcal{V}}
\def\cX{\mathcal{X}}
\def\fN{\mathfrak{N}}
\def\fb{\mathfrak{b}}
\def\fn{\mathfrak{n}}
\def\hsigma{{\hat\sigma}}
\def\beq{\begin{equation}}
\def\eeq{\end{equation}}
\pgfplotsset{compat=1.16}
\begin{document}
\title[The Bootstrap for Dynamical Systems]{The Bootstrap for Dynamical Systems}
\author{Kasun Fernando}
\address{Kasun Fernando\\
  Department of Mathematics\\
  University of Toronto\\
  40 St George St. Toronto, ON, Canada M5S 2E4}
\email{{\tt kasun.akurugodage@utoronto.ca}}
\author{Nan Zou}
\address{Nan Zou\\ 
Department of Mathematics and Statistics\\
Macquarie University \\
Room 706, 12 Wally's Walk, Macquarie Park, NSW, Australia 2113
}
\email{{\tt nan.zou@mq.edu.au}}

\begin{abstract}
    Despite their deterministic nature, dynamical systems often exhibit seemingly random behaviour. Consequently, a dynamical system is usually represented by a probabilistic model
    of which the unknown parameters must be estimated
    using statistical methods. When measuring the uncertainty of such parameter estimation, the bootstrap stands out as a simple but powerful technique. In this paper, we develop the bootstrap for dynamical systems and establish not only its consistency but also its second-order efficiency via a novel \textit{continuous} Edgeworth expansion for dynamical systems. This is the first time such continuous Edgeworth expansions have been studied. Moreover, we verify the theoretical results about the bootstrap using computer simulations.
\end{abstract}
\keywords{dynamical systems, the bootstrap, continuous Edgeworth expansions, expanding maps, Markov chains}
\subjclass[2010]{62F40, 37A50, 62M05}
\maketitle
\tableofcontents

\section{Introduction}

 After its establishment in late 19th century through the efforts of \textcite{poincare1879proprietes} and \textcite{lyapunov1892general}, the theory of dynamical systems was applied to study the qualitative behaviour of dynamical processes in the real  world. For example, the theory of dynamical systems has proved useful in, among many others, astronomy \cite{contopoulos2004order}, chemical engineering \cite{barkai2003aging}, biology \cite{mcgoff2016local}, ecology \cite{turchin2013complex}, demography \cite{zincenko2021turing}, economics \cite{varian1981dynamical}, language processing \cite{port1995mind}, neural activity \cite{shenoy2013cortical}, and machine learning \cite{brunton2019data, duriez2017machine}. In particular, deep neural networks can be considered as a special class of discrete-time dynamical systems \cite{weinan2017proposal}. 
    
While a continuous-time dynamical system is often expressed as a system of differential equations, a discrete-time dynamical system can be written as a process $\{X_{i}, i=0,1,2,\dots\}$ iteratively generated by a 
deterministic transformation function $g$ which, in practice, is often unknown:
    \begin{equation}\label{eq:DataGenerating1}
     X_{i} = g(X_{i-1}), i = 1, 2, \dots.
    \end{equation}
    Since the transformation $g$ is deterministic, conditional on a fixed $X_{0}$, the process $\{X_{i}\}$ 
    will also be deterministic. However, in the long run, this deterministic process 
    can still exhibit incomprehensibly complex behaviors and possess seemingly random patterns 
    \cite{Gora, LasotaMackey}. For example, 
    has \textit{chaotic} sample paths $\{X_{i}\}$; see Figure \ref{fig:logistic}. For another example, note that given the initial velocity and acceleration of a coin toss, the orbit of the coin should be fully determined by the laws of physics, and hence, should be deterministic \cite{keller1986probability}; however, the landing of the coin may still appear to be random. In light of this, instead of trying to figure out the exact value of $X_{i}$ with a given initial state $X_{0}$, one 
    makes the compromise of analyzing the probabilistic features of the process $\{X_{i}\}$ 
    assuming that the initial state $X_{0}$ is randomly generated from an unknown initial distribution $\mu$, i.e.,
    \begin{equation}\label{eq:DataGenerating2}
    X_{0} \sim \mu.\vspace{-10pt}
    \end{equation}
    
    \begin{figure}[h!]
    \centering
    \includegraphics[scale=0.5]{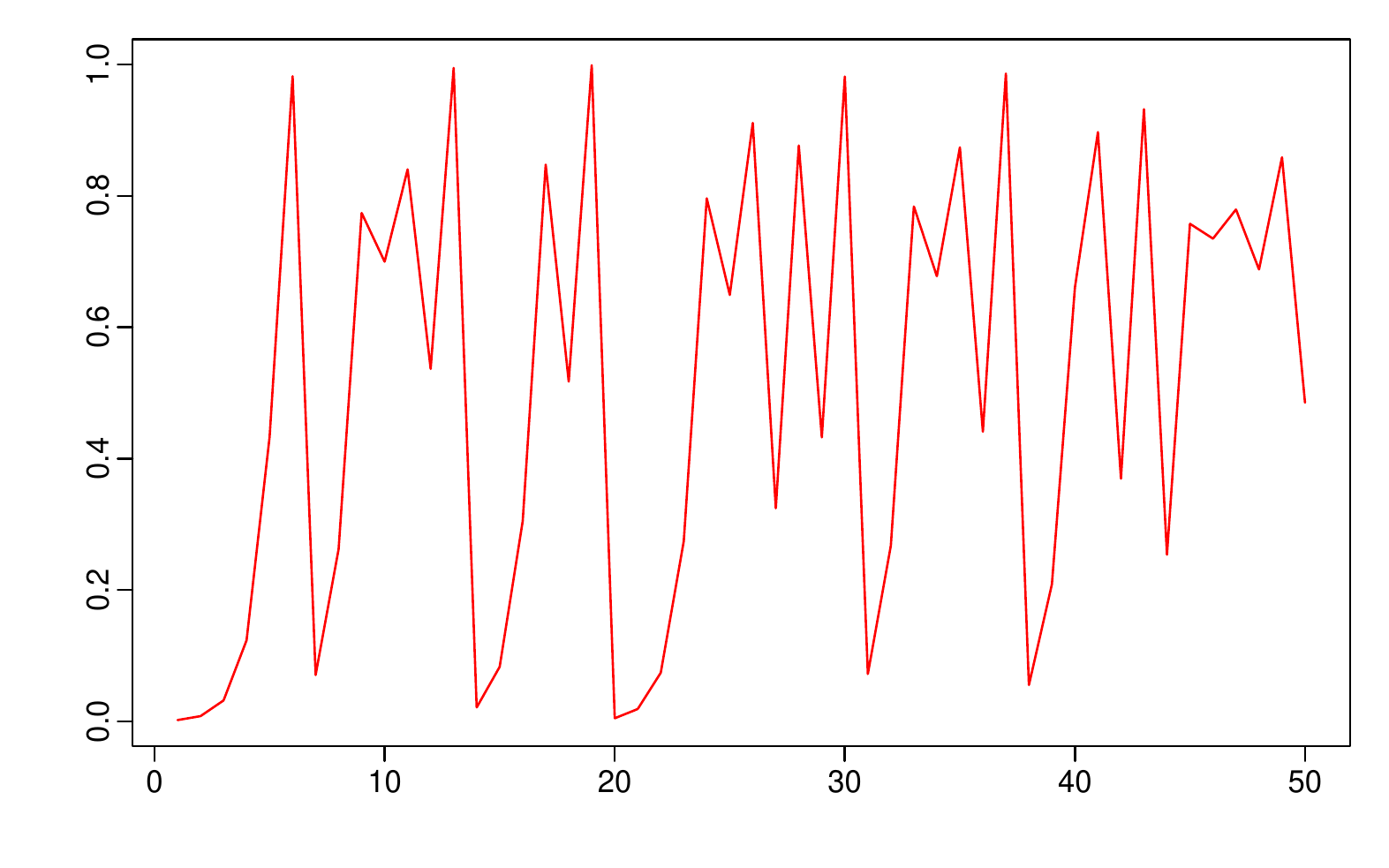}
    \vspace{-15pt}
    \caption{A sample path of the dynamical system with $g(x)=4x(1-x)$.}
    \label{fig:logistic}
    \end{figure}
    
 Usually, parameters of the model \eqref{eq:DataGenerating1} and \eqref{eq:DataGenerating2}  
    must be estimated from the available data. 
    This is carried out in a variety of fields and has wide ranging applications; for earlier surveys, see \cite{berliner1992statistics, chatterjee1992chaos, Isham1993, Jensen1993}; for a recent review with numerous references, see \cite{KM}. There is also 
    an increasing trend to study the estimation and prediction
    in dynamical systems theoretically, and several recent works in this vein include \cite{steinwart2009consistency,mcgoff2015consistency, hang2017bernstein, navarrete2018prediction, mcgoff2020empirical, mcgoff2021empirical}.     
    They present the consistency and/or the rate of convergence in various point estimation or prediction settings. However, as far as we understand, 
    the limiting distributions of these 
    estimators or predictors have not been studied yet, although some, e.g., \cite{mcgoff2015consistency, mcgoff2021empirical},
    indicated determining limiting distributions as a future direction.
    
    When approximating the limiting distribution of the estimators or predictors, the bootstrap \cite{efron1979} stands out as a 
    simple but powerful data-driven technique. In fact, the bootstrap is deceptively simple to state. Initially, by mimicking the generating process of the original dataset, the bootstrap creates a number of pseudo-datasets, each of which has the same size as the original dataset. Afterwards, the bootstrap uses the variation among these pseudo-datasets to approximate the randomness of the original dataset and the distribution of the estimators or predictors calculated from the pseudo-datasets to approximate the distribution of the original estimator or predictor; see \Cref{iidBoot} for an illustration of the classical bootstrap for the mean with iid sampling. 
    
\begin{figure}[ht]
\centering
\begin{tikzpicture}[node distance = 15mm and 12mm,  block/.style = {draw, rectangle, rounded corners, align=center}]
% Place nodes
\node [block] (init) {\small  Data: 3, 6, 9};
\node [block, above right = of init.east] (sample1) {\small  Pseudo-data: 3, 3, 9};
\node [block, below = of sample1.north] (sample2) {\small  Pseudo-data: 3, 6, 3};
\node [block, below = of sample2] (sampleB)
{\small  Pseudo-data: 9, 6, 3};
\node [below = of sample1] () {\vdots};
\node [right = of sample1] (stat1) {\small  Mean: 5};
\node [right = of sample2] (stat2) {\small  Mean: 4};
\node [right = of sampleB] (statB) {\small  Mean: 6};
\node [below right = of stat1.east] (EDF) {{\includegraphics[width=3cm]{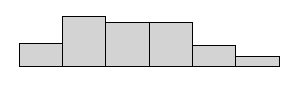}}};
\node [right = of stat2.east, xshift = 5mm] (caption){\small  Histogram};

% Draw edges
\draw [->] (init) edge node [sloped, anchor=center, above] {} (sample1.west);
\draw [->] (init) edge (sample2.west);
\draw [->] (init) edge (sampleB.west);
\draw [->] (sample1) edge (stat1.west);
\draw [->] (sample2) edge (stat2.west);
\draw [->] (sampleB) edge (statB.west);
\draw [-] (stat1.east) edge (EDF.north west);
\draw [-] (stat2.east) edge (EDF.west);
\draw [-] (statB.east) edge (EDF.south west);
\end{tikzpicture}
\caption{Approximating the distribution of the sample mean by the classical bootstrap.}
\label{iidBoot}
\end{figure}

    Admittedly, the application of the bootstrap to dynamical systems is not brand new: in \cite{haraki2007bootstrap}, local-bootstrap predictors
    are averaged out to improve a derivative-based prediction of $X_{i}$; in \cite{lodhi2011bootstrapping}, averaged block-bootstrap estimators 
    are used with the intention of 
    refining an adaptive least square estimator; in \cite{frohlich2014uncertainty}, the empirical distribution obtained from a parametric bootstrap approach is used to
    estimate some non-identifiable parameters. However, as far as we know, there is 
    neither a theoretical study of a bootstrap method for dynamical systems nor any bootstrap methods developed for the generic application in the dynamical setting. 
    
    The absence of the related literature may be due to the fact that developing a bootstrap method for the generic dynamical system setting is not a straightforward task.
    For example, since the transformation function $g$ is deterministic, dynamical systems generated by \eqref{eq:DataGenerating1} and \eqref{eq:DataGenerating2} is in general not 
    $\alpha$-mixing as in \cite{rosenblatt1956central}; 
    see \cite[p.~709]{hang2017bernstein}. Hence, it is not completely clear if the block bootstrap in \cite{kunsch1989jackknife, liu1992moving} could be applied to dynamical systems. In this paper, instead of analyzing the block bootstrap, we design a novel bootstrap method specifically for dynamical systems (see \Cref{sec:BootstrapAlgo}) in which
    (1) generate pseudo initial state $X_{0}^{*}$ from $\mu^{*}$, an initial distribution that 
    may depend on $\{X_{i}\}$, (2) obtain $\wh g$, an estimate of the transformation function $g$, and finally, (3) generate pseudo data $\{X_{i}^{*}\}$ by 
    \begin{equation}\label{dynamicalbootstrap}
    X_{i}^{*} = \wh g(X_{i-1}^{*}),\,\, i = 1, 2, \dots.
    \end{equation}
    
    The consistency of the dynamical system bootstrap in \eqref{dynamicalbootstrap} is not something we immediately expect, let alone its second-order efficiency. 
    Indeed, the bootstrap in \eqref{dynamicalbootstrap} tries to mimic the original dynamical system by mimicking the transformation function $g$ and initial distribution $\mu$, but a small perturbation in $g$ and $\mu$ may cause 
    major changes in the dynamical system.
    First, the orbit of a nonlinear dynamical systems is generally very sensitive to small changes in its initial state $X_{0}$.
    Hence, it is not clear whether the dynamical system with an initial state generated from  the bootstrap initial distribution $\mu^{*}$ is a good approximation of the dynamical system with the true initial distribution $\mu$. Second, statistical properties like ergodicity may not be preserved under small changes in the transformation function $g$. As a result, it is not clear if the dynamical system 
    based on the estimated transformation, $\wh g$, is a good 
    approximation of the actual one. For a further discussion 
    about robustness of dynamical systems, or lack thereof, we refer the reader to \cite{Devaney, ott_2002, PughShub, takens} and references therein. 
    
    In this paper, under verifiable assumptions on $\mu^{*}$ and $\wh g$, we establish the consistency and the second-order efficiency of the dynamical system bootstrap in \eqref{dynamicalbootstrap} for the statistic
    \begin{equation}\label{eq:estimator}
    \frac{1}{n}\sum_{i=0}^{n-1}h(X_{i}),
    \end{equation}
    where $h$ is a known deterministic function which is commonly referred to as an \textit{observable}. In particular, when the variance of the asymptotic distribution of \eqref{eq:estimator}, $\sigma^2$, is known or can be easily estimated, we develop a pivoted bootstrap and prove its second-order efficiency when approximating the limiting distribution of \eqref{eq:estimator}; see \Cref{sec:AsympAccuPivot}. Moreover, when $\sigma$ is unknown and cannot be readily estimated, we develop a non-pivoted bootstrap that attains the first-order efficiency; see \Cref{sec:AsympAccuNonpivot}. As far as we know, there is no known consistent estimator for $\sigma$ in the dynamical systems setting. Therefore, as of now, the non-pivoted bootstrap is the only valid way to approximate the limiting distribution. 
    
    The second-order efficiency of the bootstrap is established using the \textit{continuous} first-order Edgeworth expansion for dynamical systems.
    It is an Edgeworth expansion that holds uniformly 
    with respect to the transformation function $g$, initial distribution $\mu$,
    and the observable $h$. This is the first time such expansions are established for dynamical systems, and our results are a significant extension of the 
    Edgeworth expansion results in the dynamical system literature, e.g., \cite{FernandoPene, jirak2021sharp}.
    
    The dynamical system in $\eqref{eq:DataGenerating1}$ may, as in Remark 2.4 of \cite{mcgoff2015consistency}, be viewed as a \textit{degenerate} Markov chain with one step transition probability  
    \begin{equation}\label{eq:Markov}
        p(x,A)={\bf 1}{\{g(x) \in A\}}
    \end{equation}
    whose Markov operator is, indeed, the Koopman operator of the dynamical system in $\eqref{eq:DataGenerating1}$. Nevertheless, each one of the Markovian settings in 
    \cite{rajarshi1990bootstrap, DM, HJ, PP, MM, paparoditis2001markovian, franke2002bootstrap, bertail2006regenerative}  
    is significantly different from the dynamical system setting, and hence, their results do not apply to our setting. 
    
    First, we recall, from \cite[p.~1]{franke2002bootstrap}, \cite[pp.~254-255]{rajarshi1990bootstrap}, \cite[Assumption 2 (iv)]{HJ}, \cite[Assumption A1 (i) ]{paparoditis2001markovian}, \cite[Assumption A2 (i)]{PP}, and \cite[Assumption A2]{MM}, the assumption of the Markov transition probability 
    $p(x,A)$ being absolutely continuous. However, since the transformation function $g$ in our setting \eqref{eq:DataGenerating1} is deterministic, by \eqref{eq:Markov}, this absolute continuity assumption in the Markov process literature is violated. In a word, it is exactly the determinism in dynamical systems that differentiate them from the setting in these Markov process literature.
    
    Second, on \cite[p.~86]{DM}, the existence of a Markov transition-probability estimator, $p_n(x,A)$, such that  
    \begin{equation}\label{eq:DattaAssump}
       \lim_{n \to \infty}\sup_{x,A} |p_n(x,A)-p(x,A)| = 0,\,\, \text{a.s.}
    \end{equation}
    is assumed. Due to \eqref{eq:Markov}, \eqref{eq:DattaAssump} is equivalent to the existence of a transformation-function estimator $g_{n}$ such that, almost surely, $g_n=g$ when $n$ is large enough. However, this assumption is much stronger than the uniform convergence or even the $C^1-$convergence and is unrealistic for applications. Moreover, \eqref{eq:DattaAssump} implies strong convergence of the Markov operators of which the dynamical equivalent, strong convergence of transfer operators, is too restrictive, see \cite{Liverani} for details. 
    
    Finally, it is assumed on \cite[p.~692]{bertail2006regenerative} that there exists an accessible atom $B$, which is a positive-measure set satisfying
    $$p(x, \cdot) = p(y, \cdot), \ \text{for all} \ x, y \in B.$$ 
    In the dynamical system setting \eqref{eq:DataGenerating1}, this assumption is equivalent to $g$ being a constant function over a set with positive measure and is too restrictive in practice. 
    
    We also remark that, unlike some of the above Markovian settings, we do not assume the initial measure $\mu$ to be a invariant measure; hence, we do not need $\{X_{i}\}$ to be stationary. 

    This article is divided into two parts. In Part~\ref{part:EdgeExp}, we establish the continuous first-order Edgeworth expansions for dynamical systems whose twisted transfer operators have a spectral gap. This is implemented using the Nagaev-Guivarc'h perturbation method \cite{Nagaev,Guivarch,HennionHerve} via the Keller-Liverani approach \cite{KellerLiverani} combined with recent developments from \cite{FL, FernandoPene}. The required spectral assumption and their implications are discussed in \Cref{sec:SpectralAssump}. In \Cref{sec:Expansions}, we use these assumptions to prove the key theorem in this paper, \Cref{thm:CtsEdgeExp}, which establishes the first order continuous Edgeworth expansion. Moreover, in \Cref{sec:Exmp}, we illustrate our general results by considering several classes of examples: smooth expanding maps of the circle, piece-wise uniformly expanding maps of an interval, and Markov models including $V-$geometrically ergodic Markov chains. In Part~\ref{part:Bootstrap}, we discuss the bootstrap algorithm in detail and establish its consistency and second-order efficiency 
    using the Edgeworth expansions. This is done in \Cref{sec:BootstrapAlgo} and \Cref{sec:Accu}. Finally, we discuss the simulation results for the doubling map, 
    the drill map and the logistic map in \Cref{sec:simulationresults}.
    
\section{Notation and Preliminaries}\label{sec:Notation}

Let $X$ be a metric space with a reference Borel probability measure $m$, 
$J \subset \reals $ be a neighbourhood of $0$, and $g_\theta:X\to X,\, \theta \in J$ 
be a family of dynamical systems, as in \eqref{eq:DataGenerating1}. We assume these systems are non-singular, i.e., for all $\theta$, for all $U \subseteq X$ Borel subsets such that $m(U)=0$, we have that $m({g}^{-1}_\theta U)=0$.   
Denote by $\cM_1(X)$ the set of Borel probability measures on $X$. Let $\nu \in \cM_1(X)$.
For $p\geq 1$, by $L^p(\nu)$, we denote the standard Lebesgue spaces with respect to $\nu$, i.e.,
$$L^p(\nu)\colonequals \{ h: X \to X\,|\, h\, \text{is Borel measurable},\, \nu(|h|^p)<\infty \}$$
where the notation $\nu(h)$ refers to the integral of a function $h$ with respect to a measure $\nu$ and the corresponding norm is denoted by $\|\cdot\|_{L^p(\nu)}$. When $\nu=m$, we often write, $L^p$ instead of $L^p(m)$. 

For us, an observable is a function $h \in L^{3}$, as in \eqref{eq:estimator}. Given a family of observables $\{h_\theta\}_{\theta \in J}$, we consider the family of Birkhoff sums (also commonly referred to as ergodic sums),
\begin{equation}\label{eq:BirkhoffSum}
    S_{\theta,n}(h_\theta) = \sum_{k=0}^{n-1} h_\theta \circ 
    {g}^{k}_\theta.
\end{equation}
We denote by $H_\theta$ the operator corresponding to multiplication by $h_\theta$, i.e., 
\begin{equation}\label{MultiOper}
H_\theta(\psi) = h_\theta \psi.
\end{equation}
Recall that $\mu$ is the initial distribution as in \eqref{eq:DataGenerating2}. Write 
\begin{align}\label{eq:Mean}
    &A_{\theta} = \lim_{n \to \infty} \EXP_\mu\left(\frac{S_{\theta,n}(h_\theta)}{n}\right),\\ \label{eq:Variance}
    &\sigma^2_{\theta} =\lim_{n\to\infty} \EXP_\mu\left(\frac{S_{\theta,n}(h_\theta)-nA_{\theta}}{\sqrt{n}}\right)^2,\,\,\text{and} \\ \label{eq:ThirdMoment}
    &M_{\mu,\theta}=\EXP_\mu\left(\frac{S_{\theta,n}(h_\theta)-nA_{\theta}}{n^{1/3}}\right)^3
\end{align}
for the asymptotic mean, the asymptotic variance and the asymptotic third moment of Birkhoff sums,  $S_{\theta,n}(h_\theta)$, respectively. We will see later in \Cref{ContVar} that the first two are independent of the choice of $\mu$. 

We say $\cL_\theta:L^1\to L^1$ is the transfer operator of $g_\theta$ with respect to $m$, if for all $\vp\in L^1$ and $\psi \in L^\infty$,
\begin{equation}\label{eq:Duality}
    m(\cL_\theta(\vp)\cdot \psi) = m(\vp\cdot \psi \circ g_\theta )
\end{equation}
Let $\mu \in \cM_1(X)$ be absolutely continuous with respect to $m$ with density $\rho_\mu$.
Then, from \eqref{eq:Duality}, it follows that
\begin{equation}\label{eq:CharFunc}
    \EXP_\mu(e^{isS_{\theta, n}(h_\theta)})=m\left( \cL^n_{\theta, is}(\rho_\mu)\right)
\end{equation}
where 
\begin{equation}\label{eq:cL_theta_is}
    \cL_{\theta, is}(\cdot)=\cL_{\theta}(e^{is h_\theta} \cdot ),\ s \in \reals \ (\text{with} \ \cL_\theta \colonequals \cL_{\theta,0})\, ;
\end{equation}
see \cite[Chapter XI]{HennionHerve}.

Let the standard Gaussian density and the corresponding distribution function be denoted by, respectively,
\begin{equation}\label{eq:standardGauss}
\fn(x)=\frac{1}{\sqrt{2\pi}}e^{-x^2/2}\,\,\, \text{and}\,\,\, \fN(x)=\int_{-\infty}^x \fn(y)\, dy\, .
\end{equation}
We recall the definition of Edgeworth expansions for dynamical systems which were studied extensively in \cite{FL, FernandoPene}. 
\begin{defin}[Edgeworth expansions for the dynamical system $g_0$]
The family of Birkhoff sums $\{S_{0, n}(h)\}_{n}$ satisfies the order $r$ continuous Edgeworth expansion if there exist $A_{0},\sigma_{0}$, and for $k=1,\dots,r$ polynomials $P_{k}$ such that 
  $$ \sup_{x\in \reals} \left|\Prob_{\mu}\left(\frac{S_{0,n}(h)-nA_{0}}{\sigma_0\sqrt{n}} \leq x\right) - \fN(x) -\sum_{k=1}^r \frac{P_k(x)}{n^{k/2}}\fn(x)\right| = o(n^{-r/2})$$
as $n \to \infty$.
\end{defin}
\noindent
Note that, in the above definition, the dynamical system is fixed and the polynomials $P_k$ depend on the choice of the dynamical system. 

In the setting we work on, we can consider the continuous Edgeworth expansions for the family of dynamical systems $\{g_\theta\}$. 
\begin{defin}[Continuous Edgeworth expansions for the family $\{(g_\theta, h_\theta)\}_{\theta\in J}$]
The family of Birkhoff sums $\{S_{\theta,n}(h_\theta)\}_{\theta,n}$ satisfies the order $r$ continuous Edgeworth expansion if there exist $A_\theta, \sigma_\theta$, and for $k=1,\dots,r$ polynomials $P_k$ (independent of $\theta$) such that 
  $$ \sup_{x\in \reals} \left|\Prob_{\mu}\left(\frac{S_{\theta,n}(h_\theta)-nA_\theta}{\sigma_\theta\sqrt{n}} \leq x\right) - \fN(x) -\sum_{k=1}^r \frac{P_k(x)}{n^{k/2}}\fn(x)\right| = o(n^{-r/2})$$
as $n \to \infty$ and $\theta \to 0$.
\end{defin}
\noindent
Note that the polynomials $P_k$ above, unlike the previous case, do not depend on $\theta$. It is also worth mentioning that these polynomials can be explicitly computed, and that their coefficients depend only on the asymptotic moments of $S_{0,n}(h_0)$.  

To illustrate the importance of continuous Edgeworth expansions, suppose the statistic of interest $T$ is asymptotically normally distributed and its distribution $G$ and the family of bootstrap estimates, say $\{\wh{G}\}$, which depend on the sample size $n$, admit a continuous first-order Edgeworth expansion. Then 
$$G(x)=\fN(x)+\frac{1}{\sqrt{n}}P_1(x)\fn(x)+o(n^{-1/2}),\,\,\, \wh G(x)=\fN(x)+\frac{1}{\sqrt{n}}P_1(x)\fn(x)+o_{\text{a.s.}}(n^{-1/2}).$$
Therefore, we have $G-\wh G = o_{\text{a.s.}}(n^{-1/2})$. This is the core of the proof of asymptotic accuracy of our bootstrap algorithms. With extra information, we may improve the error of approximation to $\cO_{\text{a.s.}}(n^{-1})$ 
which is significantly better than that of the Gaussian approximation. 

In what follows,  $\cL(\cB_1,\cB_2)$ denotes the space of bounded linear operators from a Banach space $(\cB_1,\|\cdot\|_{\cB_1})$ to a Banach space $(\cB_2, \|\cdot\|_{\cB_2})$, 
and $\cB_1^{\,\prime} = \cL(\cB_1,\complex)$, i.e., the space of continuous linear functionals on $\cB_1$. Recall that $\cL(\cB_1,\cB_2)$ has a standard topology generated by the operator norm which we denote by $\|\cdot\|_{\cB_1,\cB_2}$. Given two operators $\cL_1$ and $\cL_2$, we write $\cL_1 \cL_2$ to denote their composition $\cL_1 \circ \cL_2$ when their composition is well-defined, and when the $n-$fold composition of $\cL_1$ with itself is well-defined, we denote it by $\cL^n_1$. In addition, $\cB_1 \hookrightarrow \cB_2$ denotes continuous embedding of spaces, i.e., $\cB_1 \subset \cB_2$ and there exists $ c >0$ such that $\|\cdot\|_{\cB_2} \leq  c \|\cdot\|_{\cB_1}$. Whenever we mention regularity of an operator-valued function from $\reals^n$ to $\cL(\cB_1,\cB_2)$, the regularity is with respect to the standard topologies on the two spaces. Moreover, we say an operator $\cL$ has a \textit{spectral gap} of $(1-\kappa)$ on $\cB_1$ if $1$ is a dominating simple eigenvalue of $\cL:\cB_1\to \cB_1$ and rest of its spectrum is strictly inside a disk in $\complex$ centred at the origin and having radius $\kappa$.

Throughout the article, we will use $\delta$ and $\kappa$ to denote \textit{small} positive constants with $\kappa<1$, while $c$ and $C$ are used for possibly \textit{large} positive constants. Occasionally, we simply use $\lesssim$ to denote that the estimates hold up to a constant. 
However, the uniformity of $\delta$ and $\kappa$ with respect to the parameter $\theta$ is crucial. In fact, coming up with verifiable assumptions that lead to this uniformity is one of the main contributions of this article. When several values of $\delta$'s arise from different assumptions, we will always pick the minimum of these $\delta$'s without explicitly stating that we do. When different $\kappa$'s are present, we will always pick their maximum. Finally, note that the values of constants can change from one line to the other. Even though it is an interesting problem to figure out optimal constants, we choose to focus entirely on the asymptotic accuracy (for which the value of the constants are irrelevant), and in turn, keep the exposition simpler.  

\part{Continuous Edgeworth Expansions} \label{part:EdgeExp}

Our goal in this part of the paper is to establish the first-order continuous Edgeworth expansions for the Birkhoff sums, $S_{\theta,n}(h_\theta)$. 
To this end, we employ the spectral approach. It has been used to establish limit theorems for dynamical systems and Markov chains with much success (see \cite{HennionHerve, FL, FernandoPene, HervePene, Gora} and references therein for a discussion). The operator that takes the centre stage in this approach is the transfer operators, $\cL_\theta$, defined by the duality relation \Cref{eq:Duality}. If the reader is familiar with Markov operators in Markov chains, then they may see some correspondence. However, the transfer operator is, in general, not a Markov operator.

The coding of the characteristic function using the transfer operator in \eqref{eq:CharFunc} allows us to translate the spectral properties of $\cL_{\theta, is}$ to properties of $S_{\theta,n}(h_\theta)$.
Standard facts about transfer operators which we state without proof are discussed in detail in \cite{HennionHerve, HervePene, FernandoPene, Gora}. We will refer the reader to those references whenever we omit proofs.

Now, we state a key example to keep in mind throughout the discussion. 

\begin{exmp}[$C^1$ perturbations of $C^2$ expanding maps of $\bbT$]
Let $X=\bbT$ be the one dimensional torus (the interval $[0,1]$ with $0$ and $1$ identified) along with the standard Lebesgue measure as the reference measure $m$. Let $g \in C^2(\bbT, \bbT)$ uniformly expanding map with $\|g'\|_{L^\infty} > 2$.  Let $g_\theta,\,\, \theta \in [0,1]$ with $g_0\colonequals g$ be
such that $d_{C^1}(g_\theta,g)=\|g_\theta-g\|_{L^\infty}+\|g'_\theta-g'\|_{L^\infty}\leq \theta$. We recall that the transfer operator $\cL_\theta$ takes the following form: 
$$\cL_\theta (\vp) (x)= \sum_{y \in g^{-1}_\theta\{ x\}} \frac{\vp(y)}{|g'_\theta(y)|},\,\,\, \forall x \in \bbT,\, \forall \vp \in L^1.$$
From the Nagaev-Guivarc'h spectral approach in \cite{Nagaev,Guivarch}, it follows that for all $\theta$, $\cL_\theta$ as an operator acting on $C^1(\bbT,\reals)$ has simple maximal eigenvalue $1$, and that $g_\theta$ has a unique absolutely continuous invariant probability measure (acip) $\nu_\theta$, i.e., $\nu_\theta(g_\theta^{-1}(U))=\nu_\theta(U)$ for all Borel measurable $U \subseteq X$ and $\nu_\theta$ has a density with respect to $m$. Also, $\nu_\theta$ is exponentially mixing, i.e., there exists $\kappa_\theta \in (0,1)$ such that for all $\vp, \psi \in C^1(\bbT,\reals)$,
$$\left|\nu_\theta(\vp \cdot \psi\circ g^n_\theta) - \nu_\theta(\vp)\nu_\theta(\psi)\right| \leq C_{\vp,\psi, \theta} \kappa_\theta^n.$$
Later, we shall see that $\kappa_\theta$ and $C_{\vp,\psi,\theta}$ can be chosen to be independent of $\theta$ (at least for $\theta$ close to $0$). In addition, under some non-degeneracy assumption on the observable $h_\theta$, the Birkhoff sums $S_{n,\theta}(h_\theta)$ satisfy the central limit theorem (CLT), the large deviation principle (LDP) and the first-order Edgeworth expansion; see \cite{FL, FH}. 

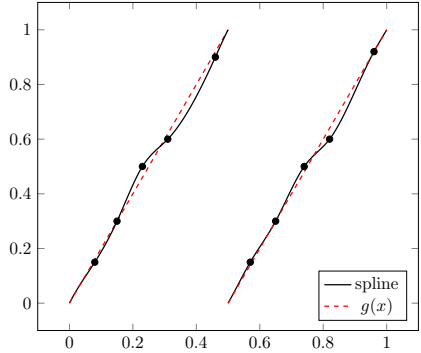
\begin{figure}[h!]
    \centering
    \begin{tikzpicture}[scale=0.6]
	\pgfplotsset{scale only axis,
		}
		\begin{axis}[
			%xlabel=$x$,
			%ylabel=$g(x)$,
			samples=100,
			legend pos=south east,
			]
	\addplot[thick][domain=0:0.08]{+54.15459505378155*x^3+-9.391031776992637*x^2+2.279693133815209*x^1+0*x^0};
	\addplot[red, thick, dashed][domain=0:0.5] {2*x};
    \addplot [only marks] table {
                            
                            0.08 0.15
                            0.15 0.3
                            0.23 0.5
                            0.31 0.6
                            0.46 0.9

                            0.57 0.15
                            0.65 0.3
                            0.74 0.5
                            0.82 0.6
                            0.96 0.92
                            
                            };
	\addplot[thick][domain=0.08:0.15]{+15.007325676347904*x^3+0.004312873591436501*x^2+1.528065561768483*x^1+0.020043401921246026*x^0};
	\addplot[thick][domain=0.15:0.23]{+-91.0879158180058*x^3+47.7471715460506*x^2+-5.633363239100392*x^1+0.3781148419646898*x^0};
	\addplot[thick][domain=0.23:0.31]{+91.18684339132918*x^3+-78.02241230839053*x^2+23.29364104742107*x^1+-1.839622153335289*x^0};
	\addplot[thick][domain=0.31:0.46]{+-10.049563234265875*x^3+16.12744585341287*x^2+-5.892814982737985*x^1+1.1763116364478134*x^0};
	\addplot[thick][domain=0.46:0.5]{+-97.08400305932165*x^3+136.23497281198985*x^2+-61.14227738368339*x^1+9.647895871259442*x^0};		
	\addplot[thick][domain=0.5:0.57]{+-53.88693934684116*x^3+86.08930962336797*x^2+-43.63532500979787*x^1+7.031202517412088*x^0};
	\addplot[thick][domain=0.57:0.65]{+58.859430203577794*x^3+-106.70698230784845*x^2+66.25856139099548*x^1+-13.84863589873865*x^0};
	\addplot[thick][domain=0.65:0.74]{+-79.97426476872631*x^3+164.01872288814457*x^2+-109.71314698639998*x^1+24.278567583030366*x^0};
	\addplot[thick][domain=0.74:0.82]{+106.10794779434832*x^3+-249.08378900188112*x^2+195.98271181221904*x^1+-51.12641092062899*x^0};
	\addplot[thick][domain=0.82:0.96]{+-46.55013338210965*x^3+126.45509069220549*x^2+-111.95916953693198*x^1+33.044369981472286*x^0};
	\addplot[thick][domain=0.96:1]{+107.2349504281378*x^3+-316.4459506813071*x^2+313.22583018164016*x^1+-103.01482992847079*x^0};
	\addplot[red, thick, dashed][domain=0.5:1] {2*x-1};
	\addlegendentry{spline}
	\addlegendentry{$g(x)$}
	\end{axis}
	\end{tikzpicture}
	\vspace{-10pt}
    \caption{The doubling map and a periodic cubic spline approximation.}
    \label{fig:doubling}
\end{figure}

More concretely, consider the doubling map, $g:\bbT\to \bbT$, given by
$$g(x)=2x\hspace{-6pt} \mod 1$$
and its  
periodic cubic spline approximations. We recall that this is one of the simplest examples of $1-$dimensional smooth maps that gives rise to chaos. As we shall see later, period cubic spline approximation can be considered as a $C^1-$perturbation of $g$. In \Cref{fig:doubling}, we depict a periodic cubic spline approximation of the doubling map sketched using some noisy data. In simulations, since we do not have the \textit{a priori} knowledge of whether the data is generated by a map which is continuous on $\bbT$, we consider $\bbT$ to be $[0,1]$ with $0$ and $1$ identified, and use natural spline approximations. 
\end{exmp}

\section{Spectral Assumptions}\label{sec:SpectralAssump}
In this section, we state assumptions similar to those in \cite[Section 1.2]{FernandoPene}  (which are based on ideas from \citep{KellerLiverani, HervePene}), in order to establish continuous Edgeworth expansions for (possibly unbounded) observables. 
\vs

\begin{asm}\label{asm:alpha}
There exist $\delta>0$, 
$p_0 \geq 1$ 
and Banach spaces $\cB$ and $\wt \cB$ such that
\begin{equation}\label{TwoSpaceChain}
    \cB \hookrightarrow\wt \cB \hookrightarrow L^{p_0}(m)
\end{equation}
each containing ${\bf 1}_X$, and satisfying
\begin{enumerate}[leftmargin=*]
\item For all $\theta \in (-\delta,\delta) 
$ and $s\in \reals$, $\cL_{\theta, is} \in \cL(\cB,\cB) \cap \cL(\wt \cB,\wt \cB)$ and $(\theta, s) \mapsto \cL_{\theta,is}\in\mathcal L(\cB,\wt\cB)$ is continuous,
\item Either $\cB = \wt \cB$, or
there exist $c,C>0$ and 
$\kappa\in (0,1)$ such that
\begin{equation}\label{eq:UniformDF0}
    \sup_{
    |\theta|, |s| \in [0,\delta]}\Vert \cL_{\theta,is}^n \psi\Vert_{\cB}\le c\cdot  \kappa^n\Vert  \psi\Vert_{\cB}+ C^n\Vert  \psi\Vert_{\wt \cB}\,
\end{equation}
for all $\psi\in\cB$ and $n$.
\end{enumerate}
\end{asm}

\begin{asm}\label{asm:beta}
There exist  
$\delta>0$ and
a sequence of Banach spaces
\begin{equation}\label{SpaceChain}
     {\cB} = \cX_0\hookrightarrow\cX^{(+)}_0\hookrightarrow\cX_1\hookrightarrow\cX^{(+)}_1\hookrightarrow \cX_{2}\hookrightarrow \cX^{(+)}_{2}\hookrightarrow\mathcal X_{3}\hookrightarrow \cX_{3}^{(+)}\hookrightarrow {\cX_4=\wt \cB} 
\end{equation}
each containing ${\bf 1}_X$, and satisfying
\begin{enumerate}[leftmargin=*]
\item For all $a=0,1,2,3$, for all $\theta \in (-\delta,\delta)$ and $s \in \reals$, $\cL_{\theta, is} \in \cL(\cX_a, \cX_a) \cap \cL(\cX_a^{(+)},\cX_a^{(+)})$,
\item For all $\theta \in
(-\delta,\delta)$,  
for all $a= 0, 1,2$ and $j=1,\dots,3-a$, 
the map $s \mapsto \cL_{\theta, is}\in\mathcal L(\cX_a^{(+)},\cX_{a+j})$ is $C^{j}$ on 
$(-\delta,\delta)$
with the $j$-th derivative: 
$$(\cL^n_{\theta,is})^{(j)}{(\cdot)}\colonequals \cL^n_{\theta,is}((i S_{\theta,n}(h_\theta))^j\,\cdot\, )\in\mathcal L(\cX^{(+)}_a,\cX_{a+j}),$$
\item  Either 
\begin{itemize}
    \item $\cX_0 = \cX^{(+)}_3,$ or
    \item $(\theta,s) \mapsto \cL_{\theta,is}\in\mathcal L(\cX_a,\cX_{a}^{(+)})$ is continuous on  $(-\delta,\delta)\times \reals$ and there exist $c,C>0$ and $\kappa\in (0,1)$ such that
\begin{equation}\label{eq:UniformDF}
    \sup_{
    |\theta|, |s| \in [0,\delta]}\Vert \cL_{\theta,is}^n \psi\Vert_{\cC}\le c\cdot  \kappa^n\Vert  \psi\Vert_{\cC}+ C^n\Vert  \psi\Vert_{\wt \cB}\,
\end{equation}
for all $\psi\in\cC$ and $n$, and whenever $\cC = \cX_a$ or $\cX^{(+)}_a$ for $a=0,1,2,3$.
\end{itemize}
\end{enumerate}
\end{asm}

\begin{asm}\label{asm:gamma}
There exist $\kappa \in (0,1)$, $\delta >0$ such that
\begin{enumerate}[leftmargin=*]
    \item 
    $\cL_0$ has a spectral gap of 
    $(1-\kappa)$ on $\cB$,
    \item For all $\theta \in 
    (-\delta, \delta)$, $1$ is an eigenvalue of $\cL_\theta:\cB \to \cB$, 
    \item {$\cL_0$ has a spectral gap of 
    $(1-\kappa)$ in $\cX_a$ and $\cX_a^{(+)}$ for all $a=0,1,2,3$,}
    \item For all $\theta \in (-\delta, \delta)$ and for all $s \neq 0$, 
    The spectrum of the operators $\cL_{\theta,is}$ acting on either $\cX_a$ or $\cX_a^{(+)}$ for some $a=0,1,2,3$ is contained in $\{z\in \complex\ |\ |z|<1\}$.  
\end{enumerate}
\end{asm}

These assumptions are natural in the context of dynamical systems. Consider \Cref{asm:alpha}, and for a fixed $\theta$, \Cref{asm:beta} and assume that {$\cB=\wt \cB$}. Then the former allows us to apply classical perturbation theory in \cite{Kato} and the latter is the standard assumption to implement the Nagaev-Guivarc'h perturbation method and prove limit theorems for dynamical systems as in \cite{HennionHerve,Gora}. Note that, in this case, the regularity \Cref{asm:beta}(2) combined with \eqref{eq:CharFunc} imply that $s\mapsto \EXP_\mu(e^{isS_{n,\theta}(h_\theta)})$ is three times continuously differentiable. This means that $S_{n,\theta}(h_\theta)$ has three finite moments. Later, we shall see that this also implies the existence of the first three asymptotic moments, $A_\theta, \sigma^2_\theta$ and $M_{\mu,\theta}$. 

In the general case, \Cref{asm:beta}, 
in particular, the uniform Doeblin-Fortet inequality given by \eqref{eq:UniformDF} allows us to apply the Keller-Liverani perturbation result \cite{KellerLiverani} in our context. This is the approach used in \cite{HervePene} in a Markovian context, and more recently, in \cite{FernandoPene} for some dynamical systems. The advantage of the method is that it allows to establish various limit theorems for unbounded observables, and hence, 
under moment conditions very close to the optimal assumptions in the iid setting, and in particular, more general observables than the observables considered in \cite{DM, HJ}.  

\Cref{asm:gamma}(1) is equivalent (see \cite[Section 1]{HervePene}) to $$\lim_{n \to \infty}\|\cL^n_0-\Pi_0\|_{\cB,\cB} \leq C \kappa^n$$ where $\Pi_0$ is the rank one eigenprojection to the eigenspace of the eigenvalue $1$ having the form $\Pi_0 (\vp) = m(\vp)\rho_{\nu_0}$ where $\rho_{\nu_0}$ is the density of the unique exponentially mixing acip $\nu_0$. In Markovian settings, this is referred to as geometric ergodicity. In \Cref{asm:gamma}(2), we require that $1$ is an eigenvalue of $\cL_\theta$, which means that $g_\theta$ has an absolutely continuous invariant measure. As we shall see later, along with \Cref{asm:alpha}, this implies that $g_\theta$ (at least, for $\theta$ close to $0$) has a unique and exponentially mixing acip. \Cref{asm:gamma}(4) implies that $g$ is also an exact dynamical system \cite[Chapter 4.3]{LasotaMackey}. So, our exposition is limited to dynamical systems that exhibit strong pseudo-stochastic behaviour. 

\begin{rem}\label{WeakerAlpha}
Since, in proofs, we only need $\theta-$continuity of spectral data at $0$ and $s-$continuity everywhere (for $\theta$ close to $0$), we can replace \Cref{asm:alpha}(1) with
 \begin{equation}\label{eq:AltAssump}
     \lim_{(\theta,s)\to (0,0)}\|\cL_{\theta,is}-\cL_{0}\|_{\cB,\wt \cB} = 0\,\,\,\text{and}\,\,\lim_{s\to \bar{s}}\|\cL_{\theta,is}-\cL_{\theta,i\bar s}\|_{\cB,\wt \cB} = 0,\,\,\text{for }\theta\,\,\text{small, for all}\,\,\bar s \in \reals .\,
 \end{equation}
However, the $(\theta,s)-$continuity on $[-\delta,\delta]\times \reals$ is true in all the examples of dynamical systems we consider. 
\end{rem}

\begin{rem}\label{rem:Lebesgue}
In \cite{FernandoPene, HervePene}, assumptions equivalent to \eqref{TwoSpaceChain} and \eqref{SpaceChain} are stated for $L^{p}$ spaces with respect to invariant measures. Here, we use the reference measure because in applications the invariant measure is not known \textit{a priori}, and hence, the assumptions with respect to the invariant measure cannot be easily verified. We note that the results in \cite[Appendix A]{HervePene} are abstract (in the sense that they are results about operators in general) and do not depend on the choice of the measure. In fact, see comments on Condition ($\wt K$) in \cite[Section 4]{HervePene}. It is sufficient that $m \in \wt \cB\, {}^\prime$. This, we will assume throughout the discussion, and in all our examples, it is true because we let $\wt \cB=L^1(m)$. 
\end{rem}

In order to establish the $\theta-$continuity of $s-$derivatives of $\cL_{\theta,is}$ (whose existence we will show using previous assumptions), we need an extra assumption on $\{H_\theta\}$, and hence, on $\{h_\theta\}$.

\begin{asm}\label{asm:delta}
Let $\cC_1 \hookrightarrow \cC_2 \neq \wt \cB$ be two spaces appearing next to each other in \eqref{SpaceChain}. For all $\theta$,  for all $\cC_1,\cC_2$, $H_\theta(\cdot) \in \cL(\cC_1,\cC_2)$ and
\begin{equation}\label{eq:CtsMult}
    \lim_{\theta \to 0}\|H_\theta - H_0\|_{\cC_1,\cC_2}=0.
\end{equation}
\end{asm}
\vs

\begin{rem}\label{rem:Mult}
In the key application we discuss in \Cref{part:Bootstrap}, $h_\theta = h$ is independent of $\theta$. Therefore, \eqref{eq:CtsMult} is satisfied automatically. Also, in the special case of all $\cB=\cX_0=\cX^{(+)}_3$, 
we have that $H_\theta(\cdot) \in \cL(\cB,\cB)$ if $\cB$ is a Banach Algebra and $h_\theta \in \cB$. Indeed, this will be the case in most of our examples.
\end{rem}
Finally, we state an assumption that is required for the CLT to be non-degenerate. In particular, the following assumption guarantees that the asymptotic variance is non-zero.
\begin{asm}\label{asm:ve} \,
\begin{enumerate}[leftmargin=*]
    \item There does not exist $\ell\in L^2(m)
$ and a constant $c$ such that $h_0 = \ell \circ g_0 - \ell + c$,
    \item the sequence $$\left\{\sum_{k=0}^{n-1} h_0 \circ g^k_0\right\}_{n \in \naturals}$$ has an $L^2(m)-$weakly convergent subsequence.
\end{enumerate}
\end{asm}\vs

\begin{rem}\label{rem:cohomology} 
The \Cref{asm:ve}(1) is commonly written as
\begin{equation}\label{eq:CoBd} 
    \text{$h_0$ is not $g_0-$cohomologous to a constant in $L^2(m)$},
\end{equation}
Suppose this is true. Then,
$$\frac{1}{\sqrt{n}}\left(\sum_{k=0}^{n} h_0(g_0^k(x_0)) - nc\right) = \frac{h_0(f^{n}(x_0)) - h_0(x_0)}{\sqrt{n}}$$
So, the left hand side goes to $0$, and hence, $\sigma_0=0$. For the converse, see \citep[Lemma A.16]{DeSimoiLiverani}. In this case, the CLT is degenerate.

Along a periodic orbit $\{g^k_0(x_0)|k=0,\dots,n-1\}$ of length $n$, 
$$\sum_{k=0}^{n} h_0(g_0^k(x_0))= \sum_{k=0}^{n}  h(g^k(x_0))= \sum_{k=0}^{n}\left( \ell (g^{k+1}_0(x_0) - \ell(g^k_0(x_0)) + c \right) = nc$$
because $g^{n}(x_0)=x_0$. So, if there are two periodic orbits along which the ergodic averages are different, then we have \eqref{eq:CoBd}. Moreover, if $\cB \hookrightarrow L^2$ then the second condition is satisfied; see \Cref{rem:0avg}. 
\end{rem}

Under the spectral assumptions, we prove three Lemmas in the next section. We use ideas in  \cite{KellerLiverani, HervePene, FernandoPene} as well as standard functional analytic tools to prove these results. The significance here is the uniformity of certain conclusions in $\theta$. This is the crucial first step to obtain asymptotic expansions uniform in $\theta$. 

\subsection{Spectral decomposition of transfer operators}
The gist of the perturbation theorems is the following:  small regular perturbations of a bounded linear operator 
\begin{itemize}[leftmargin=25pt, topsep=0pt, itemsep=0pt]
    \item[--] result in perturbations of its spectrum that are as regular as the original perturbations, 
    \item[--] do not change the structure of the spectrum.
\end{itemize}
In our case, $\cL_0$ has a spectral gap, and hence, a leading simple eigenvalue $1$. $\cL_{\theta,is}$ is a continuous perturbation of $\cL_0$. Therefore, for small enough $\theta$ and $s$, $\cL_{\theta,is}$ has a leading simple eigenvalue which remains closer to $1$ (in fact, when $s=0$ it is equal to $1$), and depends continuously on the perturbation $(\theta,s)$. This is part of the conclusion of the lemma below.

\begin{lem} \label{WeakPert}
Suppose the Assumptions \ref{asm:alpha} and \ref{asm:gamma}(1,2)
hold and let $\bar \kappa \in (\kappa,1)$.  
Then there exist 
$\delta>0$  and a family $(\lambda_\theta(is), \Pi_{\theta, is}, \Lambda_{\theta,is})_{(\theta,s) \in [-\delta,\delta]^2 }$ which is continuous 
as a function from $ [-\delta,\delta]^2$ to $\complex \times (\cL(\cB,\wt \cB))^2 $, $$\Pi_{\theta,is} \cL_{\theta,is} = \cL_{\theta,is} \Pi_{\theta,is} = \lambda_\theta(is)\Pi_{\theta,is},$$
and for any $n$,
\begin{equation}\label{OpDecom1}
    \cL_{\theta,is}^n = \lambda_{\theta}(is)^n\Pi_{\theta,is}+\Lambda_{\theta,is}^n \in \cL(\cB,\cB)\cap \cL(\wt \cB,\wt \cB)
\end{equation}
where 
\begin{equation}\label{ConvRate}
    \sup_{|\theta|, |s| \in [0,\delta]} \|\Lambda^n_{\theta,is}\|_{\cB,\cB} =\cO(\bar\kappa^n).
\end{equation}
In addition, for all $\theta \in 
[-\delta,\delta]$,  we have that 
\begin{equation}\label{LambdaContinuous}
\lambda_\theta\colonequals \lambda_\theta(0)=1 = \lim_{s \to 0} \lambda_{\theta} (is),
\end{equation}
$g_\theta$ admits a unique invariant measure $\nu_\theta$ with density $\rho_{\nu_\theta}$, $\Pi_{\theta}{(\cdot)}\colonequals \Pi_{\theta, 0}{(\cdot)} = m(\,\cdot\,)\rho_{\nu_\theta}$, and
\begin{equation}\label{eq:ReslvBnd}
    K \colonequals  \sup_{|\theta|, |s| \in [0,\delta]}  \sup \Big\{\|(z-\cL_{\theta,is})^{-1}\|_{\cB,\cB}\,\Big|\, |z|\geq \bar\kappa, |z-1|\geq (1-\bar\kappa)/2\Big\} < \infty .
\end{equation}
\end{lem}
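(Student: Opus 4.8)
The plan is to deduce \Cref{WeakPert} from the Keller--Liverani perturbation theorem \cite{KellerLiverani} applied to the two-space chain $\cB \hookrightarrow \wt\cB$, together with standard facts about spectral gaps. The key input is \Cref{asm:alpha}, which provides the uniform Doeblin--Fortet inequality \eqref{eq:UniformDF0} (if $\cB\neq\wt\cB$) and the continuity $(\theta,s)\mapsto \cL_{\theta,is}\in\cL(\cB,\wt\cB)$; in the degenerate case $\cB=\wt\cB$ the same conclusions follow from classical perturbation theory \cite{Kato}. First I would record that by \Cref{asm:gamma}(1), $\cL_0$ has a spectral gap of $(1-\kappa)$ on $\cB$, so the rank-one eigenprojection $\Pi_0$ onto the eigenvalue $1$ is well-defined and $\|\cL_0^n-\Pi_0\|_{\cB,\cB}=\cO(\kappa^n)$; moreover, by \eqref{eq:Duality} applied with $\psi={\bf 1}_X$, one gets $m(\cL_0\vp)=m(\vp)$, so $\Pi_0$ has the stated form $\Pi_0(\vp)=m(\vp)\rho_{\nu_0}$ with $\rho_{\nu_0}\colonequals\Pi_0({\bf 1}_X)$ the invariant density.

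Next I would apply the Keller--Liverani theorem: given the uniform Lasota--Yorke/Doeblin--Fortet bound and the continuity of $(\theta,s)\mapsto\cL_{\theta,is}$ as operators from $\cB$ to $\wt\cB$, there is a $\delta>0$ such that for $(\theta,s)\in[-\delta,\delta]^2$ the operator $\cL_{\theta,is}$ has a simple eigenvalue $\lambda_\theta(is)$ near $1$, with corresponding rank-one spectral projection $\Pi_{\theta,is}$ and complementary part $\Lambda_{\theta,is}=\cL_{\theta,is}(\mathrm{Id}-\Pi_{\theta,is})$, yielding the decomposition \eqref{OpDecom1}, the geometric decay \eqref{ConvRate} with rate $\bar\kappa\in(\kappa,1)$ (shrinking $\delta$ if necessary), and the continuity of $(\lambda_\theta(is),\Pi_{\theta,is},\Lambda_{\theta,is})$ as a map into $\complex\times(\cL(\cB,\wt\cB))^2$. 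The uniform resolvent bound \eqref{eq:ReslvBnd} is also part of the Keller--Liverani output: the resolvents $(z-\cL_{\theta,is})^{-1}$ are uniformly bounded in $\cL(\cB,\cB)$ on the compact region $\{|z|\geq\bar\kappa,\ |z-1|\geq(1-\bar\kappa)/2\}$, which excludes a neighbourhood of the leading eigenvalue and lies outside the essential spectral radius.

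For the statements at $s=0$: \Cref{asm:gamma}(2) says $1$ is an eigenvalue of $\cL_\theta=\cL_{\theta,0}$ on $\cB$, and by uniqueness/simplicity of the leading eigenvalue near $1$ (Keller--Liverani, for $\theta$ small) this forces $\lambda_\theta(0)=1$; continuity in $s$ then gives $\lim_{s\to0}\lambda_\theta(is)=1$, which is \eqref{LambdaContinuous}. The same argument as for $\theta=0$ — using $m(\cL_\theta\vp)=m(\vp)$, which holds since $g_\theta$ is non-singular and $\cL_\theta$ is its transfer operator — shows $\Pi_{\theta,0}(\vp)=m(\vp)\rho_{\nu_\theta}$ with $\rho_{\nu_\theta}\colonequals\Pi_{\theta,0}({\bf 1}_X)\geq 0$, $m(\rho_{\nu_\theta})=1$; that $\rho_{\nu_\theta}\,dm$ is $g_\theta$-invariant follows from $\cL_\theta\rho_{\nu_\theta}=\rho_{\nu_\theta}$ via \eqref{eq:Duality}, and its uniqueness among a.c. invariant measures follows from the decomposition \eqref{OpDecom1} at $s=0$ together with simplicity of the eigenvalue $1$.

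The main obstacle will be the \emph{uniformity in $\theta$} of $\delta$ and $\bar\kappa$ — i.e., checking that the single Doeblin--Fortet constant and the single $\kappa$ supplied by \Cref{asm:alpha}(2) and \Cref{asm:gamma} feed into Keller--Liverani in a way that produces a $\delta$ independent of $\theta$ and a decay rate $\bar\kappa$ uniform over the whole parameter square $[-\delta,\delta]^2$, rather than one that degenerates as $\theta\to0$. This is precisely where the hypotheses have been engineered (the sup over $|\theta|,|s|\in[0,\delta]$ in \eqref{eq:UniformDF0}) to be uniform, so the argument is really a careful bookkeeping exercise: one treats $(\theta,s)$ jointly as the perturbation parameter and invokes the Keller--Liverani stability estimates once, on the compact region described in \eqref{eq:ReslvBnd}. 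The remaining points ($L^\infty$ positivity of $\rho_{\nu_\theta}$, exact form of $\Pi_{\theta,0}$, non-singularity implying $m\circ\cL_\theta=m$) are routine given the standard transfer-operator facts recalled in the paper.
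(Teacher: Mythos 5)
Your proposal is correct and follows essentially the same route as the paper: the case split between $\cB=\wt\cB$ (classical Kato/Hennion--Herv\'e perturbation theory) and $\cB\neq\wt\cB$ (Keller--Liverani applied jointly in $(\theta,s)$ using the uniform Doeblin--Fortet bound), the identification $\lambda_\theta(0)=1$ by combining \Cref{asm:gamma}(2) with the fact that the only spectrum of $\cL_\theta$ outside the $\bar\kappa$-disk is the simple perturbed eigenvalue, and the standard duality argument $m\circ\cL_\theta=m$ giving $\Pi_{\theta,0}(\cdot)=m(\cdot)\rho_{\nu_\theta}$ and the unique acip. The resolvent bound and the uniformity of $\delta$ and $\bar\kappa$ are handled exactly as in the paper, as direct outputs of the Keller--Liverani stability estimates under the $\theta$-uniform hypotheses.
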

\begin{rem}   From \eqref{ConvRate}, it follows that, for small $\theta$ and $s$, the spectral radius of $\Lambda_{\theta,is}:\cB\to\cB$ is at most $\bar\kappa<1$. 
In fact, $\lambda_{\theta}(is)$ is the dominating eigenvalue of $\cL_{\theta,is}$ with the corresponding rank-one eigenprojection  $\Pi_{\theta,is}$. Also, $$\lim_{(\theta,s)\to (0,0)} \|\Pi_{\theta,is}-\Pi_{0}\|_{\cB, \wt\cB}=0$$ by continuity at $(0,0)$. The uniform boundedness of the resolvent \eqref{eq:ReslvBnd} is used in \Cref{lem:ProjBds} to show that the first three $s-$derivatives of $\Pi_{\theta,is}$ are continuous at $\theta=0$. 
\end{rem}

\begin{proof}
There are two cases. 

First, assume that $\cB=\wt \cB$. Then, by the classical perturbation theory of linear operators, (\cite[Chapter XI]{HennionHerve} and \cite[Chapters 7 and 8]{Kato}),
there exists $\delta>0$ such that for all $|s|, |\theta|\leq \delta$, $\cL_{\theta, is}$ as an operator in $\cL(\cB,\cB)$ has the decomposition 
\begin{equation}\label{OpDecom}
    \cL_{\theta,is} = \lambda_\theta(is) \Pi_{\theta, is} + \Lambda_{\theta, is}
\end{equation}
where $\Pi_{{\theta,}is}$ is the eigenprojection to the top eigenspace of $\cL_{{\theta,}is}$, the essential spectral radius of $\Lambda_{\theta, is}$ is strictly less than $|\lambda_\theta(is)|$, and $\Lambda_{\theta, is}\Pi_{\theta, is} = \Pi_{\theta, is}\Lambda_{\theta, is} = 0$. Also, $(\theta, s) \mapsto (\lambda_\theta(is),\Pi_{\theta,is}, \Lambda_{\theta,is})$ is continuous from $[-\delta,\delta]^2$ to $\mathbb C\times(\cL(\cB,\cB))^2$. 
Iterating \eqref{OpDecom}, it follows that for all $|s|, |\theta|\leq \delta$,
\begin{equation}\label{IterOpDecom}
 \cL^n_{\theta, is} = \lambda_\theta(is)^n\Pi_{\theta, is}+\Lambda^n_{\theta, is} \, ,
\end{equation}
with $\sup_{|s|,|\theta| \in [0, \delta]}\Vert \Lambda_{is}^n \Vert_{\cB,\cB}=\mathcal O(\bar\kappa^n)$. 

Second, if the spaces are different, we apply the Keller-Liverani perturbation theorem, \cite[Theorem 1]{KellerLiverani} as formulated in \cite[Theorem (K-L)]{HervePene}. 
This gives the decomposition $\eqref{OpDecom}$ as operators in $\cL(\cB,\wt \cB)$, the continuity of $(\theta, s) \mapsto (\lambda_\theta(is),\Pi_{\theta,is}, \Lambda_{\theta,is})$ as a map from $[-\delta,\delta]^2$ to $\mathbb C\times(\cL(\cB, \wt \cB))^2$, and the estimate $$\sup_{|\theta|,|s|\in [0, \delta]}\Vert \Lambda_{\theta, is}^n \Vert_{\cB, \cB}=\mathcal O(\bar\kappa^n).$$

In both cases, $\lambda_\theta\colonequals \lambda_\theta(0)$ is the leading eigenvalue of $\cL_\theta $ and that the rest of the spectrum of $\cL_\theta $ is inside a disk of radius $\bar\kappa$ centered at the origin. So, $\lambda_\theta=1$. Also, $1$ is an isolated simple eigenvalue of $\cL_\theta$ because the rank of eigenprojections (multiplicity of eigenvalues) are preserved  
(see \cite[Chapter 8.3]{Kato} and \cite[Lemma 1]{KellerLiverani}). 
So, there exists $\rho_{\nu_\theta} \geq 0$ such that $m(\rho_{\nu_\theta})=1$ 
and $\cL_\theta(\rho_{\nu_\theta})=\rho_{\nu_\theta}$ . Since $\cL_\theta$ is a transfer operator of $g_\theta$, this is equivalent to the existence of a unique ergodic absolutely continuous invariant measure $\nu_\theta$ with density $\rho_{\nu_\theta}$ (See \cite[Proposition 4.2.7]{Gora}). 
Both $\Pi_{\theta}(\cdot)=m(\,\cdot\,)\rho_{\nu_\theta}$ 
and \eqref{eq:ReslvBnd} follow from 
\cite[Chapter XI]{HennionHerve} and \cite{KellerLiverani}.  
\end{proof}
\begin{rem}
Note that, for each $\theta$, we could have applied the 
corresponding perturbation theorems to $s \mapsto \cL_{\theta, is}$ with the additional assumption that $\cL_\theta$ has a spectral gap.
From this, we do not obtain results uniform in $\theta$. 
\end{rem}
\begin{rem}\label{rem:AltAssump1}
Under the weaker assumption \eqref{eq:AltAssump}, we would still have the spectral decomposition \eqref{OpDecom} and the rest of the conclusion except the $(\theta,s)-$continuity of  $(\lambda_\theta(is), \Pi_{\theta, is}, \Lambda_{\theta,is})$. Instead, it would be continuous at $(0,0)$, and also, there is an $s-$neighbourhood 
$(-\delta,\delta)$ on which for each $\theta$ near $0$, $s \mapsto  (\lambda_\theta(is), \Pi_{\theta, is}, \Lambda_{\theta,is})$ is continuous. 
\end{rem}

To establish a uniform first-order Edgeworth expansion, the continuity of $s \mapsto \cL_{\theta,is}$, established in \Cref{WeakPert} is not sufficient. 
Moreover, $S_{\theta,n}(h_\theta)$ should have at least three moments. 
The regularity assumption in \Cref{asm:beta} of $s \mapsto \cL_{\theta,is}$ being $C^3$ is sufficient for the existence of three moments (see \Cref{sec:Moments}) and yields the asymptotic expansions for the characteristic functions (see \Cref{sec:AsympCharFn}).  

\begin{lem} \label{StrongPert}
Suppose the Assumptions \ref{asm:alpha}, \ref{asm:beta}, and \ref{asm:gamma}(1--3) hold. 
Let $\bar \kappa \in (\kappa,1)$. Then, there exists $\delta>0$ and a family $(\lambda_\theta(is),\Pi_{\theta,is},\Lambda_{\theta,is})_{(\theta, s)\in [-\delta,\delta]^2}$ which is continuous as a function in $(\theta,s)$ from $[-\delta,\delta]^2$ to $\mathbb C\times (\cL(\cB,\wt\cB))^2$ and for each $\theta \in [-\delta,\delta]$, $C^3-$smooth as a function in $s$ from $[-\delta,\delta]$ to $\mathbb C\times (\cL(\cX_0,\cX_{3}^{(+)}))^2$, 
$$\Pi_{\theta,is} \cL_{\theta,is} = \cL_{\theta,is} \Pi_{\theta,is} = \lambda_\theta(is)\Pi_{\theta,is},$$ and for all $n$,
\begin{equation}\label{DecompOp3'}
\mathcal L_{\theta, is}^n=\lambda_\theta(is)^n\Pi_{\theta,is}+\Lambda_{\theta,is}^n\,\,\,\text{in}\,\,\,\bigcap_{a=0}^{3}\left(\mathcal L(\cX_a,\cX_a)\cap\cL(\cX_a^{(+)},\cX_a^{(+)})\right),
\end{equation}
where for each $\theta$,
\begin{equation}\label{DecompOp3'Compl}
\max_{j=0, \dots ,3}\sup_{|s| \in [0,\delta]} \Vert (\Lambda_{\theta,is}^n)^{(j)}\Vert_{\cX_0,\cX^{(+)}_{3}}=\cO(\bar\kappa^n).
\end{equation}
\end{lem}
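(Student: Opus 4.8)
The plan is to upgrade \Cref{WeakPert} by adding the $C^3$-regularity in $s$, using \Cref{asm:beta}. First I would apply \Cref{WeakPert} itself (its hypotheses are implied by Assumptions \ref{asm:alpha}, \ref{asm:gamma}(1,2)) to obtain $\delta_0>0$ and the continuous family $(\lambda_\theta(is),\Pi_{\theta,is},\Lambda_{\theta,is})_{(\theta,s)\in[-\delta_0,\delta_0]^2}$ satisfying the spectral decomposition \eqref{OpDecom1}, the intertwining relation, the rate bound \eqref{ConvRate} in $\cL(\cB,\cB)$, and the uniform resolvent bound \eqref{eq:ReslvBnd}. Next I would observe that the decomposition in \eqref{OpDecom1}, obtained there in $\cL(\cB,\cB)\cap\cL(\wt\cB,\wt\cB)$, actually holds on each intermediate space $\cX_a$ and $\cX_a^{(+)}$: this is where \Cref{asm:gamma}(3) enters, giving a spectral gap of $\cL_0$ on every space in the chain \eqref{SpaceChain}, and \Cref{asm:beta}(1) keeps $\cL_{\theta,is}$ acting boundedly on each of them; the eigenprojection and eigenvalue are intrinsic (defined by a Riesz integral of the resolvent over a fixed small circle around $1$), hence agree on all spaces, yielding \eqref{DecompOp3'}.

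For the regularity statement, I would use the standard fact (as in \cite[Chapter XI]{HennionHerve}, \cite{FernandoPene}) that $s\mapsto\cL_{\theta,is}$ is $C^j$ from $\cX_a^{(+)}$ to $\cX_{a+j}$ with derivatives given by the formula in \Cref{asm:beta}(2), namely $(\cL_{\theta,is}^n)^{(j)}(\cdot)=\cL_{\theta,is}^n((iS_{\theta,n}(h_\theta))^j\,\cdot\,)$. Telescoping through the chain $\cX_0\hookrightarrow\cX_0^{(+)}\hookrightarrow\cX_1\hookrightarrow\cdots\hookrightarrow\cX_3^{(+)}$, the three successive $s$-derivatives of $\cL_{\theta,is}$ compose to a $C^3$ map from $\cX_0$ to $\cX_3^{(+)}$. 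Then, differentiating the Riesz-projection formula $\Pi_{\theta,is}=\frac{1}{2\pi i}\oint_{|z-1|=(1-\bar\kappa)/2}(z-\cL_{\theta,is})^{-1}\,dz$ under the integral sign, and using the uniform resolvent bound \eqref{eq:ReslvBnd} together with the $C^3$-dependence of $\cL_{\theta,is}$ on $s$, gives that $s\mapsto\Pi_{\theta,is}$ and $s\mapsto\lambda_\theta(is)=\Tr(\cL_{\theta,is}\Pi_{\theta,is})/\Tr(\Pi_{\theta,is})$ (equivalently, via the action on ${\bf 1}_X$) are $C^3$ from $[-\delta,\delta]$ to $\mathbb C\times\cL(\cX_0,\cX_3^{(+)})$, and hence so is $\Lambda_{\theta,is}^n=\cL_{\theta,is}^n-\lambda_\theta(is)^n\Pi_{\theta,is}$.

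The main obstacle is the uniform-in-$n$ bound \eqref{DecompOp3'Compl} on the derivatives of $\Lambda_{\theta,is}^n$. The naive $j$-th derivative of $\Lambda_{\theta,is}^n$ produces, via the Leibniz rule, a sum of $O(n^j)$ terms each a product of derivatives of $\Lambda_{\theta,is}$ (bounded uniformly by \eqref{ConvRate}, possibly only as maps between adjacent spaces in the chain), so a crude estimate only gives $O(n^3\bar\kappa^n)$, which is still $\cO(\tilde\kappa^n)$ for any $\tilde\kappa\in(\bar\kappa,1)$; I would therefore fix at the outset $\bar\kappa<\kappa'<1$ and absorb the polynomial factor, or — to keep the clean $\cO(\bar\kappa^n)$ form — resum using the identity $\Lambda_{\theta,is}^n=\frac{1}{2\pi i}\oint_{|z|=\bar\kappa}z^n(z-\cL_{\theta,is})^{-1}(I-\Pi_{\theta,is})\,dz$ and differentiate under this integral, where the $z^n$ factor carries the decay and the resolvent derivatives are controlled on the chain of spaces by repeatedly applying \eqref{eq:ReslvBnd} and the $C^3$-bounds on $\cL_{\theta,is}$; the key point throughout is that \eqref{eq:UniformDF} in \Cref{asm:beta}(3) is what makes the constants in these resolvent estimates uniform in $\theta$ (and $s$) on $[-\delta,\delta]$, as in the Keller–Liverani analysis of \cite{KellerLiverani,HervePene}. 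The $(\theta,s)$-continuity of the family as a map into $\cL(\cB,\wt\cB)$ is inherited directly from \Cref{WeakPert}.
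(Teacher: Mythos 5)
Your proposal is correct and follows essentially the same route as the paper: Keller--Liverani applied on each space of the chain (using Assumptions \ref{asm:beta}(3) and \ref{asm:gamma}(3)) for uniform resolvent bounds, differentiation of the Riesz integrals for $\Pi_{\theta,is}$ and $\lambda_\theta(is)$ through the loss-of-regularity chain $\cX_0\hookrightarrow\cdots\hookrightarrow\cX_3^{(+)}$, and the contour representation $\Lambda^n_{\theta,is}=\frac{1}{2\pi i}\oint z^n(z-\cL_{\theta,is})^{-1}dz$ to obtain \eqref{DecompOp3'Compl}. The only cosmetic difference is that for the general chain the paper packages the ``differentiate the resolvent integral across the chain'' step by verifying Condition $\cD(3)$ of \cite[Proposition A.1]{HervePene} rather than carrying it out by hand as you do (and as the paper itself does in the special case $\cX_0=\cX_3^{(+)}$).
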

\begin{rem}
Note that this Lemma is similar to \citep[Proposition 1.9]{FernandoPene}. However, we consider a two-dimensional perturbation: a $C^3$ $s-$perturbation and a continuous $\theta-$perturbation. If we were to apply the two dimensional generalization of \citep[Proposition 1.9]{FernandoPene}, we would have required the $\theta-$perturbation to be $C^3$ as well. Instead, we opt for weaker assumptions which are easier to verify. As a result, our conclusions are weaker but sufficient for applications we have in mind.
\end{rem}

\begin{proof} If $\cB = \wt \cB$, then the theorem follows directly from the classical perturbation theory of linear operators. 

In the case of $\cX_0=\cX^{(+)}_3$ and $\cB \neq \wt \cB$, we consider the chain of spaces:
$$\cB \hookrightarrow \wt \cB \hookrightarrow L^{p_0}(m)$$
Write $\cV_{1}=(-\delta,\delta)$, with $\delta$ as in \Cref{WeakPert}.
The Assumptions \ref{asm:alpha} and \ref{asm:gamma}(1--2) yield the spectral decomposition \eqref{DecompOp3'} of $\cL_{\theta,is}$ as operators in $\cL(\cB,\cB) \cap \cL(\wt \cB,\wt \cB)$, and the continuity of the spectral data as functions of $(\theta,s) \in \cV_1 \times \cV_1$. Also, for $\theta \in \cV_1$, $\cL_\theta$ has a spectral gap of $(1-\bar\kappa)$ on $\cB$ and we have the uniform boundedness of the resolvent given in \eqref{eq:ReslvBnd}. \Cref{asm:beta} says that for all $\theta \in \cV_1$, $s\mapsto \cL_{\theta,is}$ is $C^3$ as a function from $\cV_1$ to $\cL(\cB,\cB)$.  

Recall from \cite[Section 7.2]{HervePene} that $$\Pi_{\theta,is} = \frac{1}{2\pi i } \oint_{\Gamma}(z-\cL_{\theta,is})^{-1} \, dz$$
where $\Gamma$ is a circle centred at $1 \in \complex$ with radius $\eps+(1-\bar\kappa)/2$ for any $\eps$ sufficiently small. For a function $f$ of two variables $x$ and $y$, let $\Delta_h[f(x,y)]\colonequals  f(x,y+h) - f(x,y)$, and let the superscript $(j)$ denote the $j$th derivative with respect to $s$. Then,
\begin{align}\label{eq:Proj1}
    \Pi^{(1)}_{\theta,is} &=\lim_{h \to 0} \frac{1}{h}(\Pi_{\theta,i(s+h)} - \Pi_{\theta,i(s+h)}) \nonumber\\ &= \frac{1}{2\pi i } \oint_{\Gamma} \lim_{h\to 0}\frac{1}{h}[(z-\cL_{\theta,i(s+h)})^{-1}- (z-\cL_{\theta,is})^{-1}] \, dz \nonumber \\ &=\frac{1}{2\pi i } \oint_{\Gamma} \lim_{h\to 0}\frac{1}{h}[(z-\cL_{\theta,is})^{-1}(\cL_{\theta,i(s+h)} - \cL_{\theta,is})(z-\cL_{\theta,i(s+h)})^{-1}] \, dz \nonumber
    \\ &=\frac{1}{2\pi i } \oint_{\Gamma} (z-\cL_{\theta,is})^{-1}\cL^{(1)}_{\theta,is}(z-\cL_{\theta,is})^{-1} \, dz\, 
\end{align}
\begin{align}\label{eq:Proj2}
    \Pi^{(2)}_{\theta,is} &= \frac{1}{2\pi i } \oint_{\Gamma} \lim_{h\to 0}\frac{1}{h}\Delta_{h}[(z-\cL_{\theta,is})^{-1}\cL^{(1)}_{\theta,is}(z-\cL_{\theta,is})^{-1}] \, dz \nonumber \\ &=\frac{1}{2\pi i } \oint_{\Gamma} \lim_{h\to 0}\frac{1}{h}\Delta_{h}[(z-\cL_{\theta,is})^{-1}]\cL^{(1)}_{\theta,is}(z-\cL_{\theta,is})^{-1} \, dz  \nonumber \\ &\phantom{aaaaaaaaa}+ \frac{1}{2\pi i } \oint_{\Gamma} \lim_{h\to 0}\frac{1}{h}(z-\cL_{\theta,i(s+h)})^{-1}\Delta_{h}[\cL^{(1)}_{\theta,is}](z-\cL_{\theta,is})^{-1}\, dz \nonumber \\ &\phantom{aaaaaaaaaaaaaaa}-\frac{1}{2\pi i } \oint_{\Gamma} \lim_{h\to 0}\frac{1}{h} \nonumber (z-\cL_{\theta,i(s+h)})^{-1}\cL^{(1)}_{\theta,i(s+h)}\Delta_{h}[(z-\cL_{\theta,is})^{-1}] \, dz \nonumber
    \\ &= \frac{1}{2\pi i } \oint_{\Gamma} (z-\cL_{\theta,is})^{-1}\cL^{(2)}_{\theta,is}(z-\cL_{\theta,is})^{-1}\, dz\, ,
\end{align}
and similarly, 
\begin{align}\label{eq:Proj3}
    \Pi^{(3)}_{\theta,is} = \frac{1}{2\pi i } \oint_{\Gamma} (z-\cL_{\theta,is})^{-1}\cL^{(3)}_{\theta,is}(z-\cL_{\theta,is})^{-1}\, dz 
\end{align}
whenever the integrals make sense. Note that $(z - \cL_{\theta,is})^{-1}, \cL^{(3)}_{\theta,is} \in \cL(\cB,\cB)$ for $(\theta,s)\in \cV_1 \times \cV_1$. Therefore, for $\theta \in \cV_1$, $s\mapsto \Pi_{\theta,is}$ is $C^3$ as a function from $\cV_1$ to $\cL(\cB,\cB)$.

A similar argument gives, for $j=1,2,3$,
$$\Lambda^{(j)}_{\theta,is} = \frac{1}{2\pi i } \oint_{\wt\Gamma}(z-\cL_{\theta,is})^{-1}\cL^{(j)}_{\theta,is}(z-\cL_{\theta,is})^{-1}\, dz$$
where $\wt\Gamma$ is a circle centred at $0 \in \complex$ with radius $\eps+\kappa$ for any $\eps$ sufficiently small because
$$\Lambda_{\theta,is} = \frac{1}{2\pi i } \oint_{\wt\Gamma}(z-\cL_{\theta,is})^{-1} \, dz.$$
Therefore, for $\theta \in \cV_1$, $s\mapsto \Lambda_{\theta,is}$ is $C^3$ on $\cV_1$. Moreover, since 
$$\Lambda^{n}_{\theta,is} = \frac{1}{2\pi i } \oint_{\wt\Gamma}z^n(z-\cL_{\theta,is})^{-1} \, dz\, ,$$
we have that
$$[\Lambda^n_{\theta,is}]^{(j)} = \frac{1}{2\pi i } \oint_{\wt\Gamma}z^n(z-\cL_{\theta,is})^{-1}\cL^{(j)}_{\theta,is}(z-\cL_{\theta,is})^{-1}\, dz\, ,$$
and as a result, $$\|[\Lambda^n_{\theta,is}]^{(j)} \|_{\cB,\cB} \leq (2\pi)^{-1}K^2\|\cL^{(j)}_{\theta,is}\|_{\cB,\cB} \kappa^n = \cO_\theta(\bar \kappa^n).$$

Note that for $(\theta,s)\in \cV_1 \times \cV_1$, due to \eqref{OpDecom},
$$\cL_{\theta,is}\Pi_{\theta,is} = \lambda_\theta(is)\Pi_{\theta,is}$$
and hence, $$\mu(\cL_{\theta,is}\Pi_{\theta,is}{\bf 1}_{X}) = \lambda_\theta(is)\mu(\Pi_{\theta,is}{\bf 1}_{X}).$$
In particular, $$\lim_{(\theta,s)\to (0,0)}m(\Pi_{\theta,is}{\bf 1}_{X})=m(\Pi_0{\bf 1}_{X})=m(m({\bf 1}_{X})\rho_{\nu_0})=1 \neq 0$$
So, reducing $\delta$ if necessary, for $(\theta,s) \in \cV_1 \times \cV_1$, 
\begin{equation}\label{eq:EigenVal}
    \lambda_\theta(is) =\frac{m(\cL_{\theta,is}\Pi_{\theta,is}{\bf 1}_{X})}{ m(\Pi_{\theta,is}{\bf 1}_{X})}.
\end{equation}
Note that 
\begin{align*}
    [\mu(\cL_{\theta,is} \Pi_{\theta,is}{\bf 1}_X)]^{(j)}  &=\sum_{k=0}^j \binom{j}{k} \mu(\cL_{\theta,is}^{(k)}\Pi^{(j-k)}_{\theta,is}{\bf 1}_X),\,\, j = 1,2,3,\,\,\, \text{and}\\
    [\mu(\Pi_{\theta,is}{\bf 1}_X)]^{(j)} &=\mu(\Pi^{(j)}_{\theta,is}{\bf 1}_X),\,\, j = 1,2,3.
\end{align*}
for any $\mu$. So, for each $\theta \in \cV_1$, for each $j=1,2,3$, $\lambda^{(j)}_\theta(is)$ can be written as a linear combination of $s-$continuous functions on $\cV_1$ using the quotient rule. So, $\lambda_\theta(\cdot) \in C^3(\cV_1)$ as required.

{For the general case, consider chains of length 3:
$$\cC \hookrightarrow \wt \cB \hookrightarrow L^{p_0}(m)$$
where $\cC$ is either $\cX_a$ or $\cX^{(+)}_a$ for $a=0,1,2,3$. Then conditions in the \Cref{asm:alpha} are satisfied with $\cB$ replaced by $\cC$\,: $\cL_{\theta,is}$ are bounded linear operators on $\cC$ and $\wt \cB$ and $(\theta,s) \mapsto \cL_{\theta,is} \in \cL(\cC,\wt \cB)$ is continuous. The uniform Doeblin-Fortet inequality follows from the \Cref{asm:beta}(3). Also, the \Cref{asm:gamma}(3) tells that $\cL_{0}$ has a spectral gap of $(1-\kappa)$ on $\cC$. So, we have that there exists $\cV_{\cC}$, a neighbourhood of $0$, such that for all $\theta \in \cV_\cC$, $\cL_\theta$ has a spectral gap of $(1-\bar \kappa)$ on $\cC$. Also, we have the uniform boundedness of the resolvent 
$$\sup_{|\theta|, |s| \in \cV_{\cC}}  \sup \Big\{\|(z-\cL_{\theta,is})^{-1}\|_{\cC,\cC}\,\Big|\, |z|\geq \bar\kappa, |z-1|\geq (1-\bar\kappa)/2\Big\} < \infty .$$}

{For a fixed $\theta$, we need to show the $C^3$ $s-$regularity of spectral data. To this end, we adapt the proof of \citep[Proposition 1.9]{FernandoPene}. So, we check conditions of \cite[Proposition A.1]{HervePene}, and in particular, the Condition $\cD(3)$ there. Consider the neighbourhood $\cV_2=(-\delta,\delta)$ of $0$, and write $$I=\{\cX_a, a=0,1,2,3\} \cup \{\cX_a^{(+)}, a =0,1,2,3\},$$ and let $j=1,2,3$. Define 
$T_0, T_1$ on $I$ by 
$$T_0(\cX_a)=\cX_a^{(+)},\,\, T_1(\cX^{(+)}_a)=\cX_{a+1},$$
and map to $\{0\}$ otherwise. Note that we have omitted $\cX_4$ from $I$. Then, we have condition $\cD(3)(0)$. Also, for $a=0,1,2,3$, $s \mapsto \cL_{\theta,is}$ is continuous as a function from $\cV_2$ to $\cL(\cX_a,\cX^{(+)}_a)$ due to the \Cref{asm:beta}(1). This is $\cD(3)(1)$. Note that $$\cX_{a+j}=T_1(T_0T_1)^{j-1}(\cX^{(+)}_a)$$ for $a=0,1,2$ and $j=1,\dots,3-a$. Due to the \Cref{asm:beta}(2), we have $\cD(3)(2)$ because $s \mapsto \cL_{\theta,is}$ is $C^j$ as a function from $\cV_2$ to $\cL(\cX^{(+)}_a,\cX_{a+j})$. We already have $\cD(3)(3')$ because of the uniform boundedness of $R_z(\theta,s)$ at the beginning. }

Define $\cV = \cap_{\cC} \cV_\cC$. Then, reducing $\delta$ if necessary, $[-\delta, \delta] \subseteq \cV$, and for all $\theta \in [-\delta, \delta]$, we have the $C^3$ smoothness of the spectral data as a function of $s \in [-\delta, \delta]$. This follows from the conclusion \cite[Proposition A.1]{HervePene} and the discussion preceding it. \eqref{DecompOp3'Compl} follows from \cite[Corollary 7.2]{HervePene}. 
\end{proof}

\begin{rem}\label{rem:CompProjBnd}
{It follows that if $\theta \mapsto \cL^{(j)}_{\theta,is}$ is continuous at $0$ (for which we give a sufficient condition in \Cref{lem:ProjBds}), then the implied  constant in \eqref{DecompOp3'Compl}, $$(2\pi)^{-1}K^2\|\cL^{(j)}_{\theta,is}\|_{\cB,\cB}\, ,$$ is continuous at $\theta=0$, and hence, can be bounded by a constant independent of $\theta$.}
\end{rem}

\begin{rem}
We can say more. The same argument applied to sub-chains of spaces in \eqref{SpaceChain}, we can conclude the following. 
\begin{itemize}[leftmargin=25pt]
    \item The family $(\lambda_\theta(is),\Pi_{\theta,is},\Lambda_{\theta,is})_{(\theta, s)\in [-\delta,\delta]^2}$ is continuous as a function in $(\theta,s)$ from $[-\delta,\delta]^2$ to $\mathbb C\times (\cL(\cX_j,\cX_{j+m}^{(+)}))^2$ and for each $\theta \in [-\delta,\delta]$, $C^{m}$-smooth as a function in $s$ from $[-\delta,\delta]$ to $\mathbb C\times (\cL(\cX_j,\cX_{j+m}^{(+)}))^2$ for any $0\le j\le j+m\le 3$, 
    \item 
    \begin{equation}
        \max_{a=0,\dots,3}\max_{j=0, \dots ,3-a}\sup_{|\theta|,|s| \in [0,\delta]} \Vert (\Lambda_{\theta,is}^n)^{(j)}\Vert_{\cX_a,\cX^{(+)}_{a+j}}=\cO(\bar\kappa^n).
    \end{equation}
\end{itemize} 
\end{rem}

\begin{rem}\label{rem:AltAssump2}
Under the weaker assumption \eqref{eq:AltAssump}, as before, we have the spectral decomposition \eqref{OpDecom} and the rest of the conclusion except the $(\theta,s)-$continuity of  $(\lambda_\theta(is), \Pi_{\theta, is}, \Lambda_{\theta,is})$. Instead, we have the same conclusion of \Cref{rem:AltAssump1}. See also the remark appearing after \cite[Theorem  K-L]{HervePene}. 
\end{rem}

Next, under the extra assumption, the \Cref{asm:delta}, we establish that $$\Pi^{(j)}_{\theta,is},\, j=0,1,2,3, $$ as functions from $[-\delta,\delta]^2$ to $\cL(\cX_0,\cX_{3}^{(+)})$, are continuous at $(0,0)$, and in particular, they are bounded. This fact will be used to control the error in the asymptotic expansions of characteristic functions in \Cref{sec:AsympCharFn}.

\begin{lem}\label{lem:ProjBds}

Suppose the Assumption \ref{asm:alpha}, \ref{asm:beta}, \ref{asm:gamma}(1--3), and \ref{asm:delta} hold. 
Then, for all $j=0,1,2,3$,
\begin{align}
   \lim_{(\theta,s) \to (0,0)}\|\Pi^{(j)}_{\theta, is}-\Pi^{(j)}_0\|_{\cC, \cX^{(+)}_a}=0.
\end{align}
where $\cC$ is a Banach space in \eqref{SpaceChain} such that $\cC \hookrightarrow \dots \hookrightarrow \cX^{(+)}_a$ is any sub-chain of $j+2$ spaces in \eqref{SpaceChain} and $a\in \{0,1,2,3\}$ is such that such a space $\cC$ exists.
\end{lem}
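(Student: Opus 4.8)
The plan is to work from the Cauchy (Riesz) integral representations of the spectral projections derived in the proof of \Cref{StrongPert}: with $\Gamma$ the circle centred at $1\in\complex$ of radius $\eps+(1-\bar\kappa)/2$ used there, one has $\Pi_{\theta,is}=\frac{1}{2\pi i}\oint_\Gamma(z-\cL_{\theta,is})^{-1}\,dz$, while $\Pi^{(j)}_{\theta,is}$, for $j=1,2,3$, is a finite sum of integrals over $\Gamma$ of products of the resolvent $(z-\cL_{\theta,is})^{-1}$ and the $s$-derivatives $\cL^{(k)}_{\theta,is}$, $k\le j$ (written, in the proof of \Cref{StrongPert}, as the single integral $\frac{1}{2\pi i}\oint_\Gamma(z-\cL_{\theta,is})^{-1}\cL^{(j)}_{\theta,is}(z-\cL_{\theta,is})^{-1}\,dz$; see \eqref{eq:Proj1}). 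Since $\Gamma$ is compact and all the bounds below are uniform in $z\in\Gamma$, it is enough to show that each such integrand converges, in the operator norm $\cL(\cC,\cX^{(+)}_a)$, to its value at $(\theta,s)=(0,0)$, uniformly in $z\in\Gamma$; integrating over $\Gamma$ then yields the lemma.

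Two facts drive the estimate. First, the resolvents $(z-\cL_{\theta,is})^{-1}$ are \emph{uniformly bounded} on every space occurring in \eqref{SpaceChain}, for $z\in\Gamma$ and $(\theta,s)$ near $(0,0)$: this is precisely the family of uniform resolvent bounds (one per space $\cX_a$, $\cX^{(+)}_a$) obtained in the proof of \Cref{StrongPert}, compare \eqref{eq:ReslvBnd}. Second, the perturbations converge, but — as is typical of the Keller--Liverani setting — only in the \emph{coarser} norms of the chain: by \Cref{asm:alpha}(1) together with \Cref{asm:beta}(3), $\|\cL_{\theta,is}-\cL_0\|\to0$ as $(\theta,s)\to(0,0)$ when the operator is read between suitably neighbouring spaces of \eqref{SpaceChain}, and by \Cref{asm:delta}, $\|H_\theta-H_0\|\to0$ between neighbouring spaces. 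Since $\cL^{(k)}_{\theta,is}=\cL_{\theta,is}\,(iH_\theta)^{k}$ (differentiate $\cL_\theta(e^{ish_\theta}\,\cdot\,)$ in $s$), a short telescoping in $H$ and $\cL$ upgrades this to $\|\cL^{(k)}_{\theta,is}-\cL^{(k)}_0\|\to0$, again read in an appropriate operator norm between spaces of \eqref{SpaceChain}.

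The argument is then a telescoping of the difference of the two integrands. Writing $R=(z-\cL_{\theta,is})^{-1}$, $R_0=(z-\cL_0)^{-1}$ and $A_\bullet=\cL^{(k_\bullet)}_{\theta,is}$, $A^0_\bullet=\cL^{(k_\bullet)}_0$, one expands $RA_1R\cdots A_mR-R_0A^0_1R_0\cdots A^0_mR_0$ as a sum of terms, each containing exactly one ``difference'' factor — either $R-R_0=R(\cL_{\theta,is}-\cL_0)R_0$ or some $A_\bullet-A^0_\bullet$ — with all remaining factors uniformly bounded. Each difference factor has norm $\to0$ in a coarse norm, i.e.\ between spaces a bounded number of levels apart in \eqref{SpaceChain}; reading every factor between the appropriate consecutive spaces, the whole product is seen to act $\cC\to\cX^{(+)}_a$ with norm tending to $0$ exactly when the sub-chain $\cC\hookrightarrow\cdots\hookrightarrow\cX^{(+)}_a$ is long enough, and the number $j+2$ in the statement is what this level-counting affords (which also forces $a$ to grow with $j$). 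The case $j=0$ is the same with no middle factor: $\Pi_{\theta,is}-\Pi_0=\frac{1}{2\pi i}\oint_\Gamma R(\cL_{\theta,is}-\cL_0)R_0\,dz\to0$ in $\cL(\cC,\cX^{(+)}_a)$. In the degenerate cases $\cB=\wt\cB$ or $\cX_0=\cX^{(+)}_3$ all the spaces collapse and the statement is plain norm-continuity of $\Pi^{(j)}_{\theta,is}$ in $s$ and $\theta$, which follows directly from classical perturbation theory as used in \Cref{StrongPert}.

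I expect the main obstacle to be exactly this level-bookkeeping. Because $\cL_{\theta,is}-\cL_0$ and $H_\theta-H_0$ are small only in the coarser norms, one cannot invoke operator-norm continuity of the resolvent on a fixed space; every small factor must be ``paid for'' by descending \eqref{SpaceChain}, and one must (i) check that a sub-chain of the required length $j+2$ terminating at some $\cX^{(+)}_a$ exists, (ii) confirm that the uniform resolvent bounds from \Cref{StrongPert} are available on each intermediate space, and (iii) match, term by term in the telescoping, domains and codomains so that the product genuinely maps $\cC$ into $\cX^{(+)}_a$. Individually these are routine; keeping the accounting coherent across all the terms and all $j\le3$ is the delicate part.
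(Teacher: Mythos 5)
Your proposal is correct and follows essentially the same route as the paper: the Cauchy integral representation $\Pi^{(j)}_{\theta,is}=\frac{1}{2\pi i}\oint_\Gamma(z-\cL_{\theta,is})^{-1}\cL^{(j)}_{\theta,is}(z-\cL_{\theta,is})^{-1}\,dz$, a telescoping of the difference with one ``small'' factor per term (using the resolvent identity for $R-R_0$), the uniform resolvent bounds from \eqref{eq:ReslvBnd}, and the reduction of the $\theta$-continuity of $\cL^{(j)}_{\theta,is}=i^j\cL_{\theta,is}\circ H^j_\theta$ to Assumptions \ref{asm:alpha}(1) and \ref{asm:delta}. Your remarks on the level-bookkeeping across the chain \eqref{SpaceChain} match the paper's requirement that $\cC\hookrightarrow\dots\hookrightarrow\cX^{(+)}_a$ be a sub-chain of $j+2$ spaces.
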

\begin{proof}$j=0$ is already a conclusion of \Cref{StrongPert}. So, we focus on $j>0$.
From, \eqref{eq:ReslvBnd}, \eqref{eq:Proj1}, \eqref{eq:Proj2} and \eqref{eq:Proj3}, we have that 
the $\theta-$continuity of $\Pi^{(j)}_{\theta,is}$ is equivalent to $\theta-$continuity of $\cL^{(j)}_{\theta,is}$. To see this, note that
\begin{align*}
    \Pi^{(j)}_{\theta,is} - \Pi^{(j)}_{\bar \theta,i\bar s} &=  \frac{1}{2\pi i }\oint_{\Gamma} (z-\cL_{\theta,is})^{-1}\cL^{(j)}_{\theta,is}(z-\cL_{\theta,is})^{-1}\, dz - \frac{1}{2\pi i } \oint_{\Gamma} (z-\cL_{0})^{-1}\cL^{(j)}_{\bar \theta,i\bar s}(z-\cL_{\bar \theta,i\bar s})^{-1}\, dz \\ &=\frac{1}{2\pi i }\oint_{\Gamma} (z-\cL_{\theta,is})^{-1}(\cL^{(j)}_{\theta,is}-\cL^{(j)}_{\bar \theta,i\bar s})(z-\cL_{\theta,is})^{-1}\, dz\\ &\phantom{aaaaaaa}+\frac{1}{2\pi i }\oint_{\Gamma} ((z-\cL_{\theta,is})^{-1}-(z-\cL_{\bar \theta,i\bar s})^{-1})\cL^{(j)}_{\bar \theta,i\bar s}(z-\cL_{\theta,is})^{-1}\, dz\\ &\phantom{aaaaaaaaaaaa}+\frac{1}{2\pi i }\oint_{\Gamma} (z-\cL_{\bar \theta,i\bar s})^{-1}\cL^{(j)}_{\bar \theta,i\bar s}((z-\cL_{\theta,is})^{-1}-(z-\cL_{\bar \theta,i\bar s})^{-1})\, dz\, ,
\end{align*}
and hence, for $\cC_1 \hookrightarrow \cC_2 \hookrightarrow \wt\cB$,
\begin{align*}
    &\|\Pi^{(j)}_{\theta,is}- \Pi^{(j)}_{\bar \theta,i\bar s}\|_{\cC_1,\cC_2} \\ &\lesssim \|(z-\cL_{\theta,is})^{-1}\|_{\cC_2,\cC_2}\|\cL^{(j)}_{\theta,is}-\cL^{(j)}_{\bar \theta,i\bar s}\|_{\cC_1,\cC_2}\|(z-\cL_{\theta,is})^{-1}\|_{ \cC_1,\cC_1} \\ &\phantom{aaa}+\|(z-\cL_{\bar \theta,i\bar s})^{-1})\|_{\cC_2,\cC_2}\|\cL_{\theta,is}-\cL_{\bar \theta,i\bar s}\|_{\cC_1,\cC_2}\|(z-\cL_{\theta,is})^{-1}\|_{ \cC_1,\cC_1} \|\cL^{(j)}_{\bar \theta,i\bar s}\|_{ \cC_1,\cC_1}\|(z-\cL_{\theta,is})^{-1}\|_{ \cC_1,\cC_1}\\ &\phantom{aaaaaa}+\|(z-\cL_{\bar \theta,i\bar s})^{-1}\|_{\cC_2,\cC_2}\|\cL^{(j)}_{\bar \theta,i\bar s}\|_{\cC_2,\cC_2}\|(z-\cL_{\bar \theta,i\bar s})^{-1})\|_{\cC_2,\cC_2}\|\cL_{\theta,is}-\cL_{\bar \theta,i\bar s}\|_{\cC_1,\cC_2}\|(z-\cL_{\theta,is})^{-1}\|_{ \cC_1,\cC_1}.
\end{align*}
whenever each of the norms are finite. Here, the implied constants are independent of $s$ and $\theta$. 

Now, to prove the $\theta-$continuity of $\cL^{(j)}_{\theta,is}$, recall that $\cL^{(j)}_{\theta,is} = \cL_{\theta, is}((ih_\theta)^j\cdot)$ and $\cL_{\theta,is} \in \cL(\cX_a,\cX_a^{(+)})$. 
So, for $j=0,1,2,3$,
$$\cL^{(j)}_{\theta,is}=i^j\cL_{\theta, is} \circ H^j_\theta \in \cL(\cC,\cX^{(+)}_a) $$
for any $\cC$ in \eqref{SpaceChain} such that $\cC \hookrightarrow \dots \hookrightarrow \cX^{(+)}_a$ is any sub-chain of $j+2$ spaces in \eqref{SpaceChain}.
Our assumptions on the continuity of $\cL_{\theta,is}$ and $H_\theta$ in $(\theta,s)$ at $(0,0)$ gives the required result.
\end{proof}

From this, it follows that $\rho_{\nu_\theta}$ and $\lambda^{(j)}_\theta(is), j=0,1,2,3$ are continuous in $\theta$ at $\theta=0$.  
\begin{cor}\label{cor:CtsEigenV}
Under the Assumptions \ref{asm:alpha}, \ref{asm:beta}, \ref{asm:gamma}(1--3), and \ref{asm:delta}, for all $j=0,1,2,3$,
$$\lim_{(\theta,s) \to (0,0)}\lambda^{(j)}_\theta(is) = \lambda^{(j)}(0).$$
\end{cor}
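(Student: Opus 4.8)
The plan is to differentiate the quotient representation of the leading eigenvalue furnished by \eqref{eq:EigenVal} and then read off the continuity of each factor from the two preceding lemmas. Recall that, after shrinking $\delta$, \eqref{eq:EigenVal} gives
$$\lambda_\theta(is)=\frac{m(\cL_{\theta,is}\Pi_{\theta,is}\mathbf 1_X)}{m(\Pi_{\theta,is}\mathbf 1_X)}$$
for $(\theta,s)$ in a neighbourhood of $(0,0)$, with the denominator tending to $m(\Pi_0\mathbf 1_X)=1$. Applying the Leibniz and quotient rules in $s$ exactly as in the proof of \Cref{StrongPert}, for each $j=0,1,2,3$ the derivative $\lambda^{(j)}_\theta(is)$ is a fixed algebraic expression built out of: (i) the numerator derivatives $m\bigl(\sum_{k=0}^{j}\binom{j}{k}\cL^{(k)}_{\theta,is}\Pi^{(j-k)}_{\theta,is}\mathbf 1_X\bigr)$; (ii) the denominator derivatives $m(\Pi^{(l)}_{\theta,is}\mathbf 1_X)$ for $l\le j$; and (iii) powers of $\bigl(m(\Pi_{\theta,is}\mathbf 1_X)\bigr)^{-1}$.

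Next I would verify that each ingredient is continuous at $(0,0)$. The constant $\mathbf 1_X$ lies in every space of the chain \eqref{SpaceChain}. By \Cref{lem:ProjBds}, each $\Pi^{(l)}_{\theta,is}$ with $l\le 3$ converges as $(\theta,s)\to(0,0)$ to $\Pi^{(l)}_0$ in the operator norm between the relevant consecutive spaces of \eqref{SpaceChain}; and $\cL^{(k)}_{\theta,is}=i^k\cL_{\theta,is}\circ H^k_\theta$ is continuous at $(0,0)$ as a map between the appropriate spaces, by \Cref{asm:alpha}, \Cref{asm:delta}, and the computation carried out inside the proof of \Cref{lem:ProjBds}. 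Composing these and applying them to $\mathbf 1_X$, the vectors $\cL^{(k)}_{\theta,is}\Pi^{(j-k)}_{\theta,is}\mathbf 1_X$ and $\Pi^{(l)}_{\theta,is}\mathbf 1_X$ converge in $\wt\cB$ to their values at $(0,0)$. Since $m\in\wt\cB^{\,\prime}$ (see \Cref{rem:Lebesgue}), this makes items (i) and (ii) continuous at $(0,0)$; since the denominator does not vanish near $(0,0)$, item (iii) is continuous there as well. Therefore $\lambda^{(j)}_\theta(is)\to\lambda^{(j)}(0)$ as $(\theta,s)\to(0,0)$, which is the assertion; specializing to $s=0$ yields the $\theta$-continuity of $\lambda^{(j)}_\theta(0)$, and since $\rho_{\nu_\theta}=\Pi_\theta\mathbf 1_X$ the same argument gives the $\theta$-continuity of $\rho_{\nu_\theta}$ claimed in the sentence preceding the corollary.

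The only point requiring care is the bookkeeping that all the compositions make sense inside \eqref{SpaceChain}: one must check that for $j\le 3$ and $k\le j$ the $(j-k)+1$ "steps" along the chain consumed by $\Pi^{(j-k)}_{\theta,is}$ together with the $k+1$ "steps" consumed by $\cL_{\theta,is}\circ H^k_\theta$ do not exceed the length of the chain \eqref{SpaceChain} (which runs from $\cX_0$ up to $\cX_4=\wt\cB$), so that $\cL^{(k)}_{\theta,is}\Pi^{(j-k)}_{\theta,is}$ is genuinely realized as a continuous map from $\cX_0$ into $\wt\cB$, on which $m$ acts. This accounting is precisely the one already performed in \Cref{StrongPert} and \Cref{lem:ProjBds}, so it introduces no new difficulty; everything else is the quotient rule together with the continuity statements just cited.
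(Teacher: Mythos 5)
Your proposal is correct and follows essentially the same route as the paper: both differentiate the quotient representation \eqref{eq:EigenVal} via the Leibniz/quotient rules and then invoke \Cref{lem:ProjBds} (together with the continuity of $\cL^{(k)}_{\theta,is}=i^k\cL_{\theta,is}\circ H^k_\theta$ and the non-vanishing of the denominator near $(0,0)$) to conclude. The paper's proof is just a terser version of the same argument; your extra bookkeeping about the chain \eqref{SpaceChain} is the right point to flag and is indeed already handled in \Cref{StrongPert} and \Cref{lem:ProjBds}.
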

\begin{proof}
From \Cref{lem:ProjBds}, $\Pi_{\theta,is}{\bf 1}_X$, $\cL_{\theta,is}\Pi_{\theta,is}{\bf 1}_X$ and their first three derivatives with respect to $s$ are continuous at $(0,0)$ in $\theta$. 
Finally, due to \eqref{eq:EigenVal}, we can represent the $s-$derivatives of $\lambda_\theta(is)$ in terms of $s-$derivatives of $\cL_{\theta,is}\Pi_{\theta,is}{\bf 1}_X$ and $\Pi_{\theta,is}{\bf 1}_X$. So, we have the conclusion. 
\end{proof}
\begin{cor}\label{cor:CtsDensity}
Under the Assumptions \ref{asm:alpha}, \ref{asm:beta}, \ref{asm:gamma}(1--3) and \ref{asm:delta}, for all $\cC$ in \eqref{SpaceChain} with $\cC \neq \cB$ and $\cB \hookrightarrow \cC$
$$\lim_{\theta \to 0}\|\rho_{\nu_\theta} - \rho_{\nu_0}\|_{\cC} =0.$$
\end{cor}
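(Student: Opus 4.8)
The plan is to realize each density $\rho_{\nu_\theta}$ as the image of ${\bf 1}_X$ under the rank-one eigenprojection and then invoke the joint continuity of the eigenprojections already proved. By \Cref{WeakPert}, $\Pi_\theta(\cdot)=\Pi_{\theta,0}(\cdot)=m(\,\cdot\,)\rho_{\nu_\theta}$, and since $m$ is a probability measure, $m({\bf 1}_X)=1$, so $\rho_{\nu_\theta}=\Pi_{\theta,0}({\bf 1}_X)$; in particular $\rho_{\nu_\theta}-\rho_{\nu_0}=(\Pi_{\theta,0}-\Pi_0)({\bf 1}_X)$. Note that ${\bf 1}_X\in\cB=\cX_0$ by \Cref{asm:beta}, so this expression makes sense in every space of the chain \eqref{SpaceChain}.

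Next I would use the refined conclusion recorded in the remark following \Cref{StrongPert}, applied to the two-element sub-chains $\cX_0\hookrightarrow\cX_m^{(+)}$: under Assumptions \ref{asm:alpha}, \ref{asm:beta}, \ref{asm:gamma}(1--3), and \ref{asm:delta}, the map $(\theta,s)\mapsto\Pi_{\theta,is}$ is continuous from $[-\delta,\delta]^2$ into $\cL(\cX_0,\cX_m^{(+)})$ for each $m=0,1,2,3$ (equivalently, one may quote \Cref{lem:ProjBds} with $j=0$). Restricting to $s=0$ and letting $\theta\to0$ gives $\|\Pi_{\theta,0}-\Pi_0\|_{\cX_0,\cX_m^{(+)}}\to0$, whence
$$\|\rho_{\nu_\theta}-\rho_{\nu_0}\|_{\cX_m^{(+)}}\le\|\Pi_{\theta,0}-\Pi_0\|_{\cX_0,\cX_m^{(+)}}\,\|{\bf 1}_X\|_{\cX_0}\longrightarrow0 .$$
This settles the claim for every $\cC$ of the form $\cX_m^{(+)}$, $m=0,1,2,3$.

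Finally I would transfer the conclusion to the remaining admissible spaces, namely $\cC=\cX_k$ with $k\in\{1,2,3\}$ and $\cC=\cX_4=\wt\cB$. In each of these cases the space immediately preceding $\cC$ in the chain \eqref{SpaceChain} is $\cX_{k-1}^{(+)}$ (respectively $\cX_3^{(+)}$), so $\cX_{k-1}^{(+)}\hookrightarrow\cC$ and there is a constant $c>0$ with $\|\cdot\|_{\cC}\le c\,\|\cdot\|_{\cX_{k-1}^{(+)}}$; hence $\|\rho_{\nu_\theta}-\rho_{\nu_0}\|_{\cC}\le c\,\|\rho_{\nu_\theta}-\rho_{\nu_0}\|_{\cX_{k-1}^{(+)}}\to0$ by the previous step. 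Since the hypotheses $\cC\neq\cB$ and $\cB\hookrightarrow\cC$ (the latter automatic, as $\cB=\cX_0$ heads the chain) single out exactly the list $\{\cX_0^{(+)},\cX_1,\cX_1^{(+)},\cX_2,\cX_2^{(+)},\cX_3,\cX_3^{(+)},\cX_4\}$, this exhausts all cases.

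There is no serious obstacle; the corollary is essentially immediate once \Cref{StrongPert} and \Cref{lem:ProjBds} are available, the point being merely that the density is $\Pi_{\theta,0}({\bf 1}_X)$ and eigenprojections vary continuously in $\theta$ in the relevant operator norms. The only mild care needed is the bookkeeping between the ``$(+)$'' and non-``$(+)$'' members of \eqref{SpaceChain}: the continuity statements have targets of the form $\cX_{j+m}^{(+)}$, so the intermediate spaces $\cX_k$ and $\wt\cB$ are reached a posteriori via the continuous embedding $\cX_{k-1}^{(+)}\hookrightarrow\cX_k$.
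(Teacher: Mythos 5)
Your proposal is correct and follows essentially the same route as the paper: identify $\rho_{\nu_\theta}=\Pi_{\theta,0}({\bf 1}_X)$, use the $\theta$-continuity of the eigenprojections in the operator norm $\cL(\cB,\cX_0^{(+)})$ (the $j=0$ case of \Cref{lem:ProjBds}, equivalently the remark after \Cref{StrongPert}), and push the convergence to the remaining spaces of \eqref{SpaceChain} via the continuous embeddings. The paper simply does this for the leftmost space $\cC\neq\cB$ and embeds downward, whereas you enumerate the $\cX_m^{(+)}$ first; the difference is purely organizational.
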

\begin{proof}Assume $\cB \neq \cX^{(+)}_0$, if not, replace $\cX^{(+)}_0$ with the leftmost space $\cC$ in \eqref{SpaceChain} such that $\cC \neq \cB$ below. Then, the result follows from $$\|\rho_{\nu_\theta} - \rho_{\nu_0}\|_{\cC} \lesssim \|\rho_{\nu_\theta} - \rho_{\nu_0}\|_{\cX^{(+)}_0}= \|\Pi_\theta({\bf 1}_X)-\Pi_0({\bf 1}_X)\|_{\cX^{(+)}_0} \leq \|\Pi_\theta - \Pi_0\|_{\cB,\cX^{(+)}_0}\|{\bf 1}_X\|_\cB $$
and the continuity of $\theta \mapsto  \Pi_\theta$ as a function from $[-\delta,\delta]$ to $\cL(\cB,\cX^{(+)}_0)$ (the \Cref{asm:beta}(3)).
\end{proof}

\subsection{Asymptotic moments}\label{sec:Moments} In this section, we study the dependency of asymptotic moments on the parameter $\theta$ and the choice of initial measure $\mu$. From the analysis in \cite[Section 4]{FL}, we observe that there are constants $\{a_{k,j}\}$ (in our case, these depend on $\theta$) for $k=1,2,3$ and $j=0,\dots,[k/2]$  
such that
\begin{align*}
    i\EXP_\mu(S_{\theta,n}(h_\theta)- n A_\theta) &= a_{1,0} + \cO(\kappa^n), \\
    i^2\EXP_\mu([S_{\theta,n}(h_\theta)- n A_\theta]^2) & = a_{2,0} + a_{2,1}n + \cO(\kappa^n), \\
    i^3\EXP_\mu([S_{\theta,n}(h_\theta)- n A_\theta]^3) & = a_{3,0} + a_{3,1}n + \cO(\kappa^n),
\end{align*}
where the implied constant in $\cO(\cdot)$ is bounded by $\|\rho_\mu\|_{\cB}$ and continuous in $\theta$ at $\theta=0$ as we sall see from the proof of \Cref{ContVar} below.  

Next, we show that $A_\theta$ and $\sigma_\theta$ are independent of the initial measure $\mu$. This is important for applications we have in mind; see \Cref{sec:Accu}. Also, under the \Cref{asm:ve}, $\sigma_\theta>0$. Then from the continuity of $\sigma_\theta$, we obtain $\textstyle\inf_\theta \sigma_\theta >0.$ This fact is used in \Cref{lem:CharExp}. 

\begin{lem}\label{ContVar} 
Let $\cC$ be a space in \eqref{SpaceChain} such that $\cC \hookrightarrow \cX_3 \hookrightarrow \cX^{(+)}_3$. Suppose that the Assumptions \ref{asm:beta} and \ref{asm:gamma}(1) hold. 
Then, $A_{\theta}$, $\sigma_{\theta}$, $M_{\mu,\theta}$ are finite, the first two do not depend on the choice of $\mu$,
$$\lim_{\theta \to 0} A_\theta = A_0,\,\,\,\lim_{\theta \to 0} \sigma^2_\theta = \sigma^2_0,\,\,\,\lim_{\theta \to 0} M_{\mu,\theta} = M_{\mu,0},$$  and $\textstyle\sup_{|\theta|\leq \delta}\|M_{\mu,\theta}\|\lesssim \|\rho_\mu\|_\cC$.
Further, assume that \Cref{asm:ve} holds. Then, restricting $\delta$, if necessary, $\sigma_\theta \in (0,\infty)$ for $\theta \in [-\delta,\delta]$.
\end{lem}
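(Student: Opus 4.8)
The plan is to run the Nagaev--Guivarc'h machinery on the operator coding \eqref{eq:CharFunc} of the characteristic function. Combining \eqref{eq:CharFunc} with the decomposition \eqref{DecompOp3'} of \Cref{StrongPert},
$$\EXP_\mu\!\big(e^{isS_{\theta,n}(h_\theta)}\big)=\lambda_\theta(is)^n\,m\big(\Pi_{\theta,is}\rho_\mu\big)+m\big(\Lambda_{\theta,is}^n\rho_\mu\big).$$
Finiteness of $A_\theta,\sigma_\theta^2,M_{\mu,\theta}$ is the easy part: by \Cref{asm:beta}(2) and \eqref{eq:CharFunc}, $s\mapsto\EXP_\mu(e^{isS_{\theta,n}(h_\theta)})$ is $C^3$, so $S_{\theta,n}(h_\theta)$ has three finite moments. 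First I would differentiate the displayed identity $j=1,2,3$ times at $s=0$, using $\lambda_\theta(0)=1$ and $m(\Pi_{\theta,0}\rho_\mu)=m(\rho_\mu)=1$ (the latter since $\Pi_{\theta,0}(\cdot)=m(\cdot)\rho_{\nu_\theta}$ with $m(\rho_{\nu_\theta})=m(\rho_\mu)=1$). A Leibniz expansion of the main term $\lambda_\theta(is)^n m(\Pi_{\theta,is}\rho_\mu)$ produces exactly the polynomials in $n$ displayed just above the lemma, with each $a_{k,j}$ a universal expression in the $s$-derivatives at $0$ of $\lambda_\theta(is)$ and of $m(\Pi_{\theta,is}\rho_\mu)$; the remainder is $m(\Lambda^n_{\theta,is}\rho_\mu)$ together with its first three $s$-derivatives, which by \eqref{DecompOp3'Compl} are $\cO(\bar\kappa^n)$ with implied constant $\lesssim\|\rho_\mu\|$ — and, once the $\theta$-continuity of $\cL^{(j)}_{\theta,is}$ from \Cref{lem:ProjBds} is invoked, with that constant continuous at $\theta=0$ (cf.\ \Cref{rem:CompProjBnd}). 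This establishes the pre-lemma display as a by-product.

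The key structural point is that the $n$-linear coefficients defining $A_\theta$ and $\sigma_\theta^2$ involve only the leading eigenvalue. Put $A_\theta:=-i\,\partial_s\lambda_\theta(is)\big|_{s=0}$; then the base $e^{-isA_\theta}\lambda_\theta(is)$ of the centered characteristic function equals $1$ at $s=0$ with vanishing first derivative there, so the Leibniz expansion gives $\EXP_\mu[(S_{\theta,n}-nA_\theta)^2]=\sigma_\theta^2\,n+\cO(1)$ with $\sigma_\theta^2=-\,\partial_s^2\big(e^{-isA_\theta}\lambda_\theta(is)\big)\big|_{s=0}$, a quantity built from the first two $s$-derivatives of $\lambda_\theta(is)$ alone; by contrast, the $n$-linear term of $\EXP_\mu[(S_{\theta,n}-nA_\theta)^3]$ carries a factor $\partial_s m(\Pi_{\theta,is}\rho_\mu)\big|_{s=0}$, which genuinely depends on $\mu$. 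Since $\lambda_\theta(is)$ is intrinsic to the operator $\cL_{\theta,is}$ and does not see $\mu$, this shows $A_\theta,\sigma_\theta^2$ are independent of $\mu$, while $M_{\mu,\theta}$ is not (whence its subscript). Continuity at $\theta=0$ of $A_\theta$ and $\sigma_\theta^2$ is then immediate from \Cref{cor:CtsEigenV}; that of $M_{\mu,\theta}$, for fixed $\mu$, follows by combining \Cref{cor:CtsEigenV} with the $\theta$-continuity of $\partial_s^j\Pi_{\theta,is}\big|_{s=0}$ supplied by \Cref{lem:ProjBds}.

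For the bound $\sup_{|\theta|\le\delta}\|M_{\mu,\theta}\|\lesssim\|\rho_\mu\|_\cC$: each $s$-derivative of $\lambda_\theta(is)$ at $0$ is bounded uniformly on a (possibly shrunk) $[-\delta,\delta]$ by continuity at $0$; each $|m(\Pi^{(j)}_{\theta,0}\rho_\mu)|\le\|m\|_{(\wt\cB)'}\,\|\Pi^{(j)}_{\theta,0}\|_{\cC,\cX^{(+)}_a}\,\|\rho_\mu\|_\cC\lesssim\|\rho_\mu\|_\cC$ uniformly, by \Cref{lem:ProjBds} applied to a long enough sub-chain of \eqref{SpaceChain} (so $\cC$ must be chosen far enough to the left) and $m\in(\wt\cB)'$ (\Cref{rem:Lebesgue}); and the $\cO(\bar\kappa^n)$ remainder has constant $\lesssim\|\rho_\mu\|_\cC$. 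Since $\cC\hookrightarrow L^1(m)$ and $\rho_\mu$ is a probability density, $\|\rho_\mu\|_\cC\gtrsim 1$, so the $\mu$-free contributions are absorbed into $\|\rho_\mu\|_\cC$ as well; this gives the bound uniformly in $n$.

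It remains to handle non-degeneracy under \Cref{asm:ve}. I would first show $\sigma_0>0$: if $\sigma_0=0$ then, by the argument recalled in \Cref{rem:cohomology} (cf.\ \cite[Lemma A.16]{DeSimoiLiverani}), \Cref{asm:ve}(2) produces $\ell\in L^2(m)$ and a constant $c$ with $h_0=\ell\circ g_0-\ell+c$, contradicting \Cref{asm:ve}(1). Then, since $\sigma_\theta^2\to\sigma_0^2$ as $\theta\to0$, there is $\delta'\in(0,\delta]$ with $\sigma_\theta^2>\tfrac12\sigma_0^2>0$ for $|\theta|\le\delta'$; replacing $\delta$ by $\delta'$ yields $\sigma_\theta\in(0,\infty)$ on $[-\delta,\delta]$. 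The main obstacle is not a single deep step but the combination of (i) the Leibniz bookkeeping isolating the $n$-linear coefficients and verifying that exactly $A_\theta,\sigma_\theta^2$ — and not $M_{\mu,\theta}$ — are built from the eigenvalue alone, and (ii) propagating uniformity in $\theta$ through every constant, which requires the estimates \eqref{DecompOp3'Compl}, \Cref{rem:CompProjBnd} and \Cref{lem:ProjBds} on sufficiently long sub-chains of \eqref{SpaceChain} and repeated shrinking of $\delta$ to upgrade ``continuous at $0$'' to ``bounded on $[-\delta,\delta]$''.
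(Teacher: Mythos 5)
Your proposal is correct and follows essentially the same route as the paper: the spectral decomposition $\EXP_\mu(e^{isS_{\theta,n}})=\lambda_\theta(is)^n m(\Pi_{\theta,is}\rho_\mu)+m(\Lambda^n_{\theta,is}\rho_\mu)$, differentiation at $s=0$ to identify $A_\theta=-i\lambda^{(1)}_\theta(0)$ and $\sigma^2_\theta=-\lambda^{(2)}_\theta(0)$ (after centering) as eigenvalue data independent of $\mu$, the $\mu$-dependent factor $m(\Pi^{(1)}_\theta\rho_\mu)$ entering only $M_{\mu,\theta}$, continuity via \Cref{cor:CtsEigenV} and \Cref{lem:ProjBds}, and non-degeneracy from \Cref{asm:ve} plus continuity of $\theta\mapsto\sigma_\theta$. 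The only cosmetic differences are that the paper reduces to $A_\theta=0$ rather than carrying the factor $e^{-isA_\theta}$, and your explicit remark that $\|\rho_\mu\|_\cC\gtrsim1$ absorbs the $\mu$-free terms is a detail the paper leaves implicit.
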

\begin{proof}
Combining \eqref{eq:CharFunc} and \eqref{IterOpDecom}, we have,
\begin{equation}\label{eq:CharFnOp}
    \EXP_\mu(e^{isS_{\theta, n}(h_\theta)})=m(\cL^n_{\theta, is}(\rho_\mu)) = \lambda_\theta(is)^n m(\Pi_{\theta, is}\rho_\mu)+m(\Lambda^n_{\theta, is}\rho_\mu)
\end{equation}
Note that $m(\Pi_\theta\rho_\mu)=1$. Taking the first derivative of \eqref{eq:CharFnOp} at $s=0$, 
$$i\EXP_\mu(S_{n,\theta}(h_\theta))=\, n\, \lambda_\theta^{(1)}(0) +\, m(\Pi^{(1)}_\theta\rho_\mu)+m([\Lambda^n_{\theta}]^{(1)}\rho_\mu).$$
This yields,
$$-i\lambda^{(1)}_\theta(0)= \lim_{n \to \infty} 
\frac{1}{n}\EXP_\mu(S_{n,\theta}(h_\theta))=A_\theta.$$
Note that the limit is independent of the initial measure $\mu$ because it is $\lambda^{(1)}_\theta(0)$, and hence, depends only on the dynamical systems $g_\theta$ and the reference measure $m$. So, $A_{\theta}$ is a function only of $\theta$. 
Also, due to \Cref{cor:CtsEigenV}, it follows that $\lim_{\theta \to 0}A_\theta = \lim_{\theta \to 0} -i\lambda^{(1)}_\theta(0) =-i\lambda^{(1)}_0(0) = A_0$. 

From now on, without loss of generality, we may assume that $A_{\theta} =0$. If not, we can consider $\wt h_\theta = h_\theta - A_{\theta}$. Next, taking the second derivative of \eqref{eq:CharFnOp} at $s=0$,
$$i^2\EXP_\mu(S_{n,\theta}(h_\theta))^2=n\lambda_\theta^{(2)}(0)+m(\Pi^{(2)}_\theta\rho_\mu)+m([\Lambda^n_{\theta}]^{(2)}\rho_\mu)$$
So, we have $$\sigma^2_{\theta}=\lim_{n\to \infty}\EXP_\mu\Big(\frac{S_{n,\theta}(h_\theta)}{n}\Big)^2=-\lambda^{(2)}_\theta(0) < \infty$$
This gives that $\sigma_{\theta}$ is independent of the choice of $\mu$. 
As before, due to \Cref{cor:CtsEigenV}, $$\lim_{\theta \to 0}\sigma^2_\theta = \lim_{\theta \to 0} -\lambda^{(2)}_\theta(0) =-\lambda^{(2)}_0(0) = \sigma^2_0.$$

It is standard that the \Cref{asm:ve} 
yields $\sigma_0>0$; see \cite[Lemma A.14]{DeSimoiLiverani}. 
Since $\theta \mapsto \sigma_\theta$ is continuous at $\theta=0$, the positivity of $\sigma_\theta$ in a neighbourhood of $0$ follows. 

Finally, taking the third derivative of \eqref{eq:CharFnOp} at $s=0$, we obtain,
$$i^3\EXP_\mu(S_{n,\theta}(h_\theta))^3=n[\lambda_\theta^{(3)}(0)+3\lambda^{(2)}_\theta(0)m(\Pi^{(1)}_{\theta}\rho_\mu)]+m(\Pi^{(3)}_\theta\rho_\mu)+m([\Lambda^n_{\theta}]^{(3)}\rho_\mu)$$
and from the first derivative equation, $$m(\Pi^{(1)}_{\theta}\rho_\mu) = \lim_{n \to \infty} i\EXP_\mu(S_{n,\theta}(h_\theta)) = \lim_{n \to \infty} i\EXP_\mu(S_{n,\theta}(h_\theta) - nA_\theta)  = a_{1,0}.$$
Therefore, 
$$M_{\mu,\theta} =  \lambda_\theta^{(3)}(0)+3\lambda^{(2)}_\theta(0)m(\Pi^{(1)}_{\theta}\rho_\mu) = \lambda_\theta^{(3)}(0)-3\sigma^2_\theta a_{1,0},$$
and due to \Cref{lem:ProjBds} and \Cref{cor:CtsEigenV} we have that $M_{\theta,\mu}$ is continuous at $\theta=0$. 
Also, $$|M_{\mu,\theta}| \leq \sup_{\theta} |\lambda^{(3)}_\theta(0)| +\|\rho_\mu\|_{\cC} \sup_{\theta}|\lambda^{(2)}_\theta(0)|\|\Pi^{(1)}_\theta\|_{\cC,\cX^{(+)}_3} \lesssim \|\rho_\mu\|_{\cC}$$
which proves the last claim.
\end{proof}

\begin{rem}\label{rem:0avg} Suppose $\nu_0(h_0)=m(h_0\rho_{\nu_0})=0$; if not, write $\wt h_0 = h_0 - \nu_0(h_0)$ instead of $h_0$. Then, due to \Cref{asm:gamma}, $\|\cL^n_0(h_0 \rho_{\nu_0})\|_{\cB} \lesssim \kappa^n$, and provided that $\cB \hookrightarrow L^2$ (which is the case in our examples), we have
$$\sum_{n=0}^\infty \|\cL^n_0(h_0 \rho_{\nu_0})\|_{L^2} \lesssim \sum_{n=0}^\infty \|\cL^n_0(h_0 \rho_{\nu_0})\|_{\cB} <\infty .$$ 
This implies that the sequence 
$$\left\{\sum_{k=0}^n h_0 \circ g^k_0\right\}$$
is bounded in $L^2$, and hence, has a $L^2-$weak convergent subsequence; see \cite[Footnote 89]{DeSimoiLiverani}. So, we have the \Cref{asm:ve}(2). 
\end{rem}
\begin{rem}
Note that we may have to restrict the original $[-\delta,\delta]$ neighbourhood to make sure that $\sigma_\theta>0$. But for our analysis existence of one such neighbourhood is sufficient. So, reducing $\delta$ if necessary, we write 
\begin{equation}\label{BarTildesigma}
\bar\sigma = \inf_{|\theta| \leq \delta} \sigma_{\theta} >0\,\,\,\text{and}\,\, \tilde\sigma = \sup_{|\theta| \leq \delta} \sigma_{\theta} <\infty.
\end{equation}
These bounds will appear in later proofs.
\end{rem}

\begin{rem}\label{rem:Moments}
Note that $a_{0,0}, \dots, a_{3,1}$ are continuous at $\theta=0$. Since $a_{2,1}=-\sigma^2_\theta$ and $a_{3,1}=-iM_{\mu,\theta}$, both are continuous at $\theta=0$. It is easy to see from the proof that $$a_{1,0}=m(\Pi_\theta^{(1)}\rho_\mu), a_{2,0}=m(\Pi_\theta^{(2)}\rho_\mu),\,\,\text{and}\,\, a_{3,0}=m(\Pi_\theta^{(3)}\rho_\mu).$$ So, each one of them is continuous at $\theta=0$ as claimed. Moreover, the error terms $m([\Lambda^n_\theta]^{(j)} \rho_\mu)$ in the expressions for $\EXP(S_{\theta,n}(h_\theta)-A_\theta)^j,\, j=1,2,3$ are continuous at $\theta=0$. Therefore, for each $n$, $\EXP_\mu(S_{\theta,n}(h_\theta)-A_\theta)^j \to \EXP_\mu(S_{0,n}(h_0)-A_0)^j$ as $\theta \to 0$, and hence, for all $j$, $$\lim_{\theta \to 0}\EXP_{\mu}(h_\theta \circ g^j_\theta)= \EXP_\mu(h_0 \circ g^j_0).$$ 
\end{rem}

As we have seen, the existence of asymptotic moments was a direct result of the $s-$regularity of the perturbations of the transfer operators and the $\theta-$regularity of asymptotic moments was a result of the $\theta-$regularity of perturbations of the transfer operators. The latter, as we shall see in \Cref{sec:Exmp}, is a result of the $\theta-$regularity of the family of dynamical systems $\{g_\theta\}$. Also, as discussed in \cite[Section 4]{FL}, the coefficients of the first-order Edgeworth expansions can be expressed in terms of asymptotic moments (more specifically, in terms of $a_{k,j}$'s). Assuming this fact, \Cref{rem:Moments} implies that the coefficients depend on the spectral data of the transfer operators. We will see this in the next section where we derive the expansions.  

\section{First-order Continuous Edgeworth Expansions}\label{sec:Expansions}

The derivation of the continuous Edgeworth expansion is an extension of the classical proof of the CLT due to L\'evy. First, we obtain the asymptotic expansions for the characteristic functions, $\EXP(e^{isS_{n,\theta}(h_\theta)})$, by writing their Taylor expansions. Due to \eqref{eq:Duality} and \eqref{DecompOp3'}, this expansion follows from the expansions of $(\lambda_\theta(is))^n$ and $\Pi_{\theta,is}$, and the contribution from $\Lambda^n_{\theta, is}$ is negligible. This is discussed in \Cref{sec:AsympCharFn}. Then, in \Cref{sec:DistExp}, we use the expansion of the characteristic function, $\EXP(e^{isS_{n,0}(h_0)})$, to obtain an expansion for the distribution functions. Even though we obtain just the first-order expansion here, it is uniform in the initial distribution $\mu$ as well as in $\theta$, i.e., uniform for the family $g_\theta$, and hence, they are more general than the first order expansions in the current literature. This is precisely our theoretical contribution to the theory of Edgeworth expansions in dynamical systems.

Throughout the discussion, we let $\Omega \subset \cM_1(X)$ be such that all $\mu \in \Omega$ are absolutely continuous with density functions $\rho_\mu \in \cC$ and $$\sup_{\mu}\|\rho_\mu\|_{\cC} < \infty$$ where $\cC$ is a space in \eqref{SpaceChain} such that $\cC \hookrightarrow \cX_3 \hookrightarrow \cX^{(+)}_3$.

\subsection{Asymptotic expansions of characteristic functions}\label{sec:AsympCharFn}

Here, we establish an asymptotic expansion characteristic functions of $S_{n,\theta}(h_\theta)$, 
$$\EXP_\mu(e^{is S_{n,\theta}(h_\theta)/\sqrt{n} }),\,\, \theta \to 0,\,\, n \to \infty$$
using our spectral assumptions.   
 
\begin{lem} \label{lem:CharExp} Suppose that the Assumptions \ref{asm:alpha}, \ref{asm:beta}, \ref{asm:gamma}(1-3), \ref{asm:delta}, and \ref{asm:ve} hold.
Then there exist $\delta >0$ and $\kappa \in (0,1)$ and constants $a$ and $b$ independent of $\theta$ such that for $|s|<\bar\sigma \delta \sqrt{n}$ (where $\bar\sigma$ is as in \eqref{BarTildesigma}), 
\begin{multline}\label{eq:CharExpansion}
    \sup_{\mu \in \Omega} \left|\EXP_\mu\Big(e^{ is\frac{S_{n,\theta}(h_\theta)-nA_{\theta}}{\sigma_{\theta}\sqrt{n}}}\Big) - e^{-s^2/2}\left\{1+\frac{1}{\sqrt{n}}\left(\frac{a}{6}\cdot(is)^3+b\cdot(is)\right)\right\}\right| \\ = R(|s|)\left(e^{-s^2/4}o\Big(\frac {1}{\sqrt{n}}\Big)+\cO(\kappa^n)\right)  
\end{multline}
as $n \to\infty$ and $\theta \to 0$ where $R$ is a polynomial with $R(0)=0$. 
\end{lem}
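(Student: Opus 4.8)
\textit{Plan.} The proof is a uniform (in $\theta$ and in $\mu\in\Omega$) version of the Nagaev--Guivarc'h characteristic function expansion. I would start from the operator identity \eqref{eq:CharFnOp} together with the spectral decomposition of \Cref{StrongPert}: after the reduction $A_\theta=0$ (permissible by \Cref{ContVar}, which makes $A_\theta$ independent of $\mu$ and gives $\lambda_\theta^{(1)}(0)=0$, $\lambda_\theta^{(2)}(0)=-\sigma_\theta^2$; recall also that \Cref{asm:ve} via \Cref{ContVar} gives $\bar\sigma>0$), write, for $|\xi|\le\delta$,
$$\EXP_\mu\big(e^{i\xi S_{n,\theta}(h_\theta)}\big)=\lambda_\theta(i\xi)^n\,m(\Pi_{\theta,i\xi}\rho_\mu)+m(\Lambda_{\theta,i\xi}^n\rho_\mu),$$
and substitute $\xi=s/(\sigma_\theta\sqrt n)$; this is legitimate exactly on the range $|s|<\bar\sigma\delta\sqrt n$ since $\sigma_\theta\ge\bar\sigma$. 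The remainder term $m(\Lambda_{\theta,i\xi}^n\rho_\mu)$ vanishes at $\xi=0$ and is $C^3$ in $\xi$; by \eqref{DecompOp3'Compl} (whose implied constant is $\theta$-independent near $0$ by \Cref{rem:CompProjBnd}, i.e. by \Cref{lem:ProjBds}), together with $m\in\wt\cB^{\,\prime}$ (\Cref{rem:Lebesgue}) and $\sup_{\mu\in\Omega}\|\rho_\mu\|_\cC<\infty$, it is bounded by $|s|\,\cO(\bar\kappa^n)$ uniformly in $\mu$ and $\theta$; this is absorbed into the $R(|s|)\,\cO(\kappa^n)$ term.

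Next I would treat the main term $\lambda_\theta(i\xi)^n m(\Pi_{\theta,i\xi}\rho_\mu)$ by splitting $|s|<\bar\sigma\delta\sqrt n$ at a slowly growing threshold $T_n\to\infty$ with $T_n=o(n^{1/6})$ (so that $|s|^3/\sqrt n\to 0$ on $|s|\le T_n$). On the central part $|s|\le T_n$ I would Taylor-expand: writing $\lambda_\theta(i\xi)=e^{\psi_\theta(\xi)}$ with $\psi_\theta(0)=0$, $\psi_\theta'(0)=0$, $\psi_\theta''(0)=-\sigma_\theta^2$, and using Taylor--Lagrange at order three ($\psi_\theta\in C^3$ by \Cref{StrongPert}), one gets $n\psi_\theta(\xi)=-s^2/2+\psi_\theta'''(\tau\xi)\,s^3/(6\sigma_\theta^3\sqrt n)$ for some $\tau\in(0,1)$; likewise $m(\Pi_{\theta,i\xi}\rho_\mu)=1+\big(m(\Pi_\theta^{(1)}\rho_\mu)/\sigma_\theta\big)(is)/\sqrt n+\cO(s^2/n)$, the $\cO$ uniform by the bounds on $\Pi_\theta^{(j)}$ from \Cref{lem:ProjBds}. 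Multiplying, exponentiating, and collecting the $n^{-1/2}$ terms produces the bracket $e^{-s^2/2}\{1+n^{-1/2}(\tfrac a6(is)^3+b(is))\}$ with $a$ read off from $\lambda_0^{(3)}(0)$ and $b$ from $m(\Pi_0^{(1)}\rho_\mu)$ (each normalized by a power of $\sigma_0$), both independent of $\theta$. The residual error is a sum of: (i) $\big(\psi_\theta'''(\tau\xi)/\sigma_\theta^3-\psi_0'''(0)/\sigma_0^3\big)\,s^3/(6\sqrt n)$, which tends to $0$ in the joint limit $(n,\theta)\to(\infty,0)$ by \Cref{cor:CtsEigenV} and $\sigma_\theta\to\sigma_0>0$ (\Cref{ContVar}), since $|\tau\xi|\le T_n/(\bar\sigma\sqrt n)\to0$; (ii) $\big(m(\Pi_\theta^{(1)}\rho_\mu)/\sigma_\theta-b\big)(is)/\sqrt n$, which tends to $0$ as $\theta\to0$ uniformly in $\mu\in\Omega$ by \Cref{lem:ProjBds}; and (iii) genuinely higher-order pieces of size $\cO(s^2/n)$, $\cO(s^4/n)$, $\cO(s^6/n)$, each of which, carrying the Gaussian factor $e^{-s^2/2}$, is $\le e^{-s^2/4}\,\cO(1/n)$. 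Since the whole difference vanishes at $s=0$, each piece is $\le R(|s|)\,e^{-s^2/4}\,o(n^{-1/2})$ for a suitable fixed polynomial $R$ with $R(0)=0$.

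On the outer part $T_n<|s|<\bar\sigma\delta\sqrt n$ one has $|\xi|<\delta$, so after shrinking $\delta$ the cumulant estimate $\operatorname{Re}\psi_\theta(\xi)\le-\tfrac{\sigma_\theta^2}{4}\xi^2$ holds (from $\psi_\theta''(0)=-\sigma_\theta^2$, the uniform bound on $\psi_\theta'''$ near $0$, and $\bar\sigma\le\sigma_\theta$), whence $|\lambda_\theta(i\xi)^n|\le e^{-n\sigma_\theta^2\xi^2/4}=e^{-s^2/4}$; together with $\sup\|\Pi_{\theta,i\xi}\|<\infty$ (\Cref{StrongPert}) and $\sup_\mu\|\rho_\mu\|_\cC<\infty$ this gives $|\lambda_\theta(i\xi)^n m(\Pi_{\theta,i\xi}\rho_\mu)|\le C\,e^{-s^2/4}$, and the model bracket is likewise $\le C\,e^{-s^2/4}$. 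Because $T_n\to\infty$, $e^{-s^2/4}<e^{-T_n^2/4}$ here decays faster than any power of $n$, so for $R$ of large enough (fixed) degree the difference on this region is $\le R(|s|)\,e^{-s^2/4}\,o(n^{-1/2})$, while the $m(\Lambda^n)$ part is again $\le R(|s|)\,\cO(\kappa^n)$. Setting $\kappa:=\bar\kappa$ (enlarged if any other exponential rate entered) and taking $R$ to be the maximum of the polynomials produced along the way finishes the estimate.

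The step I expect to be the main obstacle is the uniformity in $\theta$: one must control the Taylor coefficients and remainders of $\xi\mapsto\lambda_\theta(i\xi)$ and $\xi\mapsto\Pi_{\theta,i\xi}$ uniformly for small $\theta$, and then organize the $s$-range so that the error is genuinely $o(n^{-1/2})$ after the factor $e^{-s^2/4}$ is extracted — this is what forces the slowly growing cutoff $T_n$ and the simultaneous use of \Cref{lem:ProjBds}, \Cref{cor:CtsEigenV} and \Cref{ContVar}. By comparison, the algebra of collecting the $n^{-1/2}$ terms and the exponential smallness of the $\Lambda^n$ contribution are routine.
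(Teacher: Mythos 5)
Your proposal is correct and follows essentially the same route as the paper: the spectral decomposition of \Cref{StrongPert} applied to \eqref{eq:CharFnOp}, a third-order Taylor expansion of $\log\lambda_\theta$ together with a first-order expansion of $m(\Pi_{\theta,is}\rho_\mu)$, uniformity in $(\theta,\mu)$ supplied by \Cref{lem:ProjBds}, \Cref{cor:CtsEigenV} and \Cref{ContVar}, and the mean-value bound $|m(\Lambda^n_{\theta,\cdot}\rho_\mu)|\lesssim |s|\,\bar\kappa^n\|\rho_\mu\|_{\cC}$ for the non-leading part. The only organizational difference is your split of the $s$-range at a cutoff $T_n$: the paper treats all of $|s|<\bar\sigma\delta\sqrt n$ at once via the elementary inequality $|e^{\alpha}-(1+\beta)|\le e^{\max(|\alpha|,|\beta|)}\left(|\alpha-\beta|+\tfrac12|\beta|^2\right)$ and makes the Taylor remainder $\psi_\theta$ uniformly small by shrinking $\delta$, which removes the need for your outer-region cumulant bound; both variants are valid.
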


\begin{proof}
For the sake of notational simplicity, we drop the subscript indicating the dependency on $\theta$. Whenever the dependency is relevant, we  reintroduce the subscript. Recall that, under the assumptions,
the Lemmas \ref{WeakPert}, \ref{StrongPert}, \ref{lem:ProjBds} and \ref{ContVar}, and the Corollary \ref{cor:CtsEigenV} hold. 

Due to \eqref{LambdaContinuous}, $\lambda(0)=1$. Using \eqref{eq:CharFunc} and \eqref{IterOpDecom}, for $\theta$ and $\delta>0$ sufficiently close to $0$, for
$|s|<\sigma\delta \sqrt{n}$,
\begin{equation}\label{eq:fundamental}
    \EXP_\mu\left(e^{is\frac{S_n(h)-nA}{\sigma \sqrt{n}}}\right) = \lambda\left(\frac{is}{\sigma \sqrt{n}}\right)^n e^{-is\frac{nA}{\sigma \sqrt{n}}} m\Big(\Pi_{\frac{is}{\sigma \sqrt{n}}}\rho_\mu\Big) + m\Big(\Lambda^n_{\frac{is}{\sigma \sqrt{n}}}\rho_\mu\Big) 
\end{equation}
Using the Taylor expansion,
\begin{align*}
    \lambda &\left(\frac{is}{\sigma \sqrt{n}}\right)^n = \exp n\log\lambda\left(\frac{is}{\sigma \sqrt{n}}\right) \nonumber \\
    &= \exp n\left( \sum_{k=0}^3(\log \lambda)^{(k)}(0)\frac{s^k}{k!\sigma^k n^{k/2}} + \frac{s^3}{\sigma^3 n^{3/2}}\psi\left(\frac{is}{\sigma\sqrt{n}}\right) \right) \nonumber \\
    &= \exp \left(is\frac{nA}{\sigma \sqrt{n}}+(\log\lambda)^{(2)}(0)\frac{s^2}{2\sigma^2}+ (\log\lambda)^{(3)}(0)\frac{s^3}{6\sigma^3\sqrt{n} }+  \frac{s^3}{\sigma^3 \sqrt{n}}\psi\left(\frac{is}{\sigma\sqrt{n}}\right)\right)\\
    &= \exp \left(is\frac{nA}{\sigma \sqrt{n}}+(\lambda^{(2)}(0)-\lambda^{(1)}(0)^2)\frac{s^2}{2\sigma^2}+ (\log\lambda)^{(3)}(0)\frac{s^3}{6\sigma^3\sqrt{n} }+  \frac{s^3}{\sigma^3 \sqrt{n}}\psi\left(\frac{is}{\sigma\sqrt{n}}\right)\right)\\
    &= \exp \left(is\frac{nA}{\sigma \sqrt{n}}-\frac{s^2}{2}+ (\log\lambda)^{(3)}(0)\frac{s^3}{6\sigma^3\sqrt{n} }+  \frac{s^3}{\sigma^3 \sqrt{n}}\psi\left(\frac{is}{\sigma\sqrt{n}}\right)\right)
\end{align*}
where $\psi$ is continuous at $0$ with $\psi(0)=0$. Here, we also used the fact that $iA=\lambda^{(1)}(0)$ and $i^2\sigma^2 =\lambda^{(2)}(0)-\lambda^{(1)}(0)^2$ when $A\neq 0$ which follows from \eqref{eq:CharFnOp}. 
So, 
\begin{align}
    \lambda\left(\frac{is}{\sigma \sqrt{n}}\right)^n e^{-is\frac{nA}{\sigma \sqrt{n}}}
    &= e^{-s^2/2}\exp \left((\log\lambda)^{(3)}(0)\frac{s^3}{6\sigma^3\sqrt{n} }+  \frac{s^3}{\sigma^3 \sqrt{n}}\psi\left(\frac{is}{\sigma\sqrt{n}}\right)\right),
\end{align}
and 
\begin{align}\label{NeedName1}
    \left|m\left(\Pi_{\frac{is}{\sigma \sqrt{n}}}\rho_\mu\right)
    -1-\frac{s}{\sigma\sqrt{n}}m(\Pi^{(1)}_0 \rho_\mu)\right| \leq \frac{s^2}{\sigma^2n} \sup_{\gamma \in [0,1]}\left|m(\Pi^{(2)}_{\frac{i\gamma s}{\sigma\sqrt{n}}}\rho_\mu)\right|.
\end{align}
Write $\alpha = (\log\lambda)^{(3)}(0)\frac{s^3}{6\sigma^3\sqrt{n} }+  \frac{s^3}{\sigma^3 \sqrt{n}}\psi\left(\frac{is}{\sigma\sqrt{n}}\right)$ and $\beta=(\log\lambda)^{(3)}(0)\frac{s^3}{6\sigma^3\sqrt{n} }$. Then using 
$$|e^{\alpha}-(1+\beta)| \leq e^{\max{(|\alpha|,|\beta|)}}\left(|\alpha-\beta|+\frac{1}{2}|\beta|^2\right),$$
we estimate,
\begin{multline*}
    \left| \lambda\left(\frac{is}{\sigma \sqrt{n}}\right)^n e^{-is\frac{nA}{\sigma \sqrt{n}}+s^2/2} -\left(1+(\log\lambda)^{(3)}(0)\frac{s^3}{6\sigma^3\sqrt{n}}\right)\right| \\ \leq e^{s^2/4} \left( \frac{|s|^3}{\sigma^3 \sqrt{n}}\left|\psi\left(\frac{is}{\sigma\sqrt{n}}\right)\right|+|(\log\lambda)^{(3)}(0)|^2\frac{|s|^6}{36\sigma^6n }\right).
\end{multline*}
Therefore,
\begin{multline}\label{NeedName2}
    \left| \lambda\left(\frac{s}{\sigma \sqrt{n}}\right)^n e^{-is\frac{nA}{\sigma \sqrt{n}}} -e^{-s^2/2}\left(1+(\log\lambda)^{(3)}(0)\frac{s^3}{6\sigma^3\sqrt{n}}\right)\right| \\ \leq e^{-s^2/4} \left( \frac{|s|^3}{\sigma^3 \sqrt{n}}\left|\psi\left(\frac{is}{\sigma\sqrt{n}}\right)\right|+|(\log\lambda)^{(3)}(0)|^2\frac{|s|^6}{36\sigma^6n }\right).
\end{multline}
Reintroducing the dependency on $\theta$ and $\mu$, and writing 
\begin{equation}\label{EdgePolyFourier}
     Q_{\theta}(s)=(\log\lambda_\theta)^{(3)}(0)\frac{s^3}{6\sigma_\theta^3}+m(\Pi_\theta^{(1)}\rho_\mu)\frac{s}{\sigma_\theta}.
\end{equation}
Combining \eqref{NeedName1} and \eqref{NeedName2}, and substituting it in \eqref{eq:fundamental}, we have 
\begin{align}\label{eq:thetaCharExp}
   &\left| \EXP_\mu\left(e^{is\frac{S_{\theta,n}(g)-nA_{\theta,\mu}}{\sigma_\theta \sqrt{n}}}\right) -e^{-s^2/2}\left(1+\frac{Q_\theta(s)}{\sqrt{n}}\right)\right| \\
   &\leq \left|\lambda\left(\frac{is}{\sigma \sqrt{n}}\right)^n e^{-is\frac{nA}{\sigma \sqrt{n}}} m\Big(\Pi_{\theta,\frac{i\gamma s}{\sigma_\theta\sqrt{n}}}\rho_\mu\Big) + m\Big(\Lambda^n_{\theta,\frac{i\gamma s}{\sigma_\theta\sqrt{n}}}\rho_\mu\Big) -e^{-s^2/2}\left(1+
  \frac{Q_\theta(s)}{\sqrt{n}}\right)\right| \nonumber \\
   &\leq e^{-s^2/4} \left( \frac{|s|^3}{\sigma_\theta^3 \sqrt{n}}\left|\psi_\theta\left(\frac{is}{\sigma_\theta\sqrt{n}}\right)\right|+|(\log\lambda_\theta)^{(3)}(0)|^2\frac{|s|^6}{36\sigma^6_\theta n }\right) +\sup_{|s|\leq  \bar\sigma \delta \sqrt{n}}  \Big|m\Big(\Lambda^n_{\theta,\frac{i\gamma s}{\sigma_\theta\sqrt{n}}}\rho_\mu\Big)\Big|  \nonumber  \\&\phantom{aaaaaaaaaaaaaa}+| m(\Pi^{(1)}_\theta\rho_\mu)| \sup_{\gamma\in [0,1]} |(\log \lambda_\theta)^{(3)}(i\gamma s)|\frac{|s|^4}{6\sigma_\theta^4n} + \frac{s^2}{\sigma^2_\theta n} \sup_{\gamma \in [0,1]}\left|m\Big(\Pi^{(2)}_{\theta,\frac{i\gamma s}{\sigma_\theta\sqrt{n}}}\rho_\mu\Big)\right|\nonumber  
\end{align}
and plugging in $s=0$ in \eqref{eq:CharFnOp} we have  $m\big(\Lambda^n_{\theta,0}\rho_\mu\big)=0$. Therefore, 
$$\Big|m\Big(\Lambda^n_{\theta,\frac{is}{\sigma \sqrt{n}}}\rho_\mu\Big)\Big| =\Big|m\Big(\Lambda^n_{\theta,\frac{is}{\sigma \sqrt{n}}}\rho_\mu\Big)-  m\big(\Lambda^n_{\theta, 0}(\rho_\mu)\big)\Big|\leq \left|\frac{is}{\sigma \sqrt{n}}\right| \sup_{ \gamma\in [0,1], \theta} \Big\|\big[\Lambda^n_{\theta,\frac{is\gamma}{\sigma \sqrt{n}}}\big]^{(1)}\rho_\mu\Big\|_{\wt B} \leq  \frac{C|s|}{\bar\sigma \sqrt{n}} \|\rho_\mu\|_{\cC} \bar\kappa^n.$$
where $\bar\kappa$ is as in Lemma \ref{WeakPert} and $C$ is independent of $\theta$.  

Note that, we need an expansion independent of $\theta$. So, we replace $Q_\theta$ with $Q_{0}\equalscolon Q$, and $\sigma_\theta$ with $\bar\sigma$ (where appropriate), and use $\sup_{\mu \in \Omega}\|\rho_\mu\|_{\cC} < \infty$, to conclude,

\begin{align*}
   &\left| \EXP_\mu\left(e^{is\frac{S_{\theta,n}(g)-nA_{\theta,\mu}}{\sigma_\theta \sqrt{n}}}\right) -e^{-s^2/2}\left(1+\frac{Q(s)}{\sqrt{n}}\right)\right| \nonumber\\
   &\phantom{a}\leq e^{-s^2/2}\left|\frac{Q_\theta(s)}{\sqrt{n}} -\frac{Q(s)}{\sqrt{n}} \right| + e^{-s^2/4} \frac{|s|^3}{\bar \sigma^3\sqrt{n}}\left|\psi_\theta\left(\frac{is}{\sigma_\theta\sqrt{n}}\right)\right|+ e^{-s^2/4} |(\log\lambda_\theta)^{(3)}(0)|^2\frac{|s|^6}{36\bar \sigma^6 n } \\&\phantom{aaa}+| m(\Pi^{(1)}_\theta\rho_\mu)| \sup_{\gamma\in [0,1]} |(\log \lambda_\theta)^{(3)}(i\gamma s)|\frac{|s|^4}{6\bar \sigma^4n} + \frac{s^2}{\bar \sigma^2n} \sup_{\gamma \in [0,1]}\left|m\Big(\Pi^{(2)}_{\theta,\frac{i\gamma s}{\sigma_\theta\sqrt{n}}}\rho_\mu\Big)\right| + \frac{C|s|}{\sqrt{n}} \|\rho_\mu\|_{\cC} \bar\kappa^n.
\end{align*}
Due to the continuity of $\lambda_\theta(is)$ and $\Pi_{\theta,is}$ and their derivatives with respect to $s$, at $(\theta,s)=(0,0)$,  the third and the fourth terms on the right are $\cO(n^{-1})$. Note that for the same reason, by choosing $\theta$ and $|s|$ small, 
$|\psi_\theta(s)|$ can be made small. So, the second term is $o(n^{-1/2})$ as $\theta \to 0$. Finally, 
\begin{align}\label{eq:1stPoly}
    \left|\frac{Q_\theta(s)}{\sqrt{n}} -\frac{Q(s)}{\sqrt{n}} \right| &\leq \left|(\log\lambda_\theta)^{(3)}(0)\sigma_0  -(\log\lambda_0)^{(3)}(0)\sigma_\theta\right|\frac{|s|^3}{6\bar \sigma^6 \sqrt{n}} \nonumber \\ &\phantom{aaaaaaaaaaaaaaaaa}+\left|m(\Pi_\theta^{(1)}\rho_\mu)\sigma_0-m(\Pi_0^{(1)}\rho_\mu)\sigma_\theta\right|\frac{|s|}{\bar \sigma^2 \sqrt{n}}.
\end{align}
Again, due to continuity of $(\log \lambda_\theta)^{(3)}$, $\sigma_\theta$ and $\Pi^{(1)}_\theta$ at $\theta = 0$, and the uniform boundedness of $\rho_\mu$, we have that the terms on the right are $o(n^{-1/2})$ as $\theta \to 0$. 
\end{proof}
\begin{rem}\label{rem:a&b}
From the proof, $$a=\frac{\log\lambda^{(3)}_0(0)}{(i\sigma_0)^3},\,\,b=\frac{m(\Pi^{(1)}_0\rho_\mu)}{i\sigma_0},\,\,\text{and}\,\, R(s)=C(s+s^6)$$ for $C>0$ sufficiently large. Hence, $R(|s|)/|s| = C(1+|s|^5)$ is bounded near $0$. This fact is used in the proof of \Cref{thm:CtsEdgeExp}. 
\end{rem}

\subsection{Edgeworth expansions for distribution functions}\label{sec:DistExp}

Now, we are ready to discuss the key theoretical result that ensures the asymptotic accuracy of the bootstrap for a large class of dynamical systems. We know from \cite{FernandoPene} that under the Assumptions \ref{asm:beta} and \ref{asm:gamma}(1), 
for each $\theta$, $\fN$ is the stable law of $S_{n,\theta}(h_\theta)$. That is, the following CLT holds: For any initial distribution $\mu$,
$$\frac{S_{\theta,n}(h_\theta)-nA_\theta}{\sigma_\theta\sqrt{n}}\implies Z  $$
where $Z$ is a standard normal random variable and $\implies$ denotes convergence in distribution. In the next theorem, a quantitative and a uniform version of this CLT is established using our spectral assumptions.

\begin{thm}\label{thm:CtsEdgeExp} Suppose that  the Assumptions \ref{asm:alpha}, \ref{asm:beta}, \ref{asm:gamma}, \ref{asm:delta}, and \ref{asm:ve} hold. Let $\fN(\cdot)$ and $\fn(\cdot)$ be as in \eqref{eq:standardGauss}.
 Then, there exists a quadratic polynomial $P$ such that the following asymptotic expansion holds:
\begin{equation}\label{eq:EdgeExp}
    \sup_{\mu \in \Omega, x\in \reals} \left|\Prob_{\mu}\left(\frac{S_{\theta,n}(h_\theta)-nA_\theta}{\sigma_\theta\sqrt{n}} \leq x\right) - \fN(x) - \frac{P(x)}{\sqrt{n}}\fn(x)\right| = o(n^{-1/2}),
\end{equation}
 as $\theta \to 0$ and $n\to \infty$.
\end{thm}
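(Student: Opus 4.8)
The plan is to pass from the characteristic-function estimate of \Cref{lem:CharExp} to the distribution-function estimate \eqref{eq:EdgeExp} along the classical Fourier route (as in \cite{FernandoPene}): identify the Edgeworth polynomial by inverting the Fourier-side expansion, apply the Berry--Esseen smoothing inequality, and estimate the resulting frequency integral over three ranges, the large-frequency range being where \Cref{asm:gamma}(4) enters. To identify $P$, write $Q(s)=\tfrac a6(is)^3+b(is)$ with $a,b$ as in \Cref{rem:a&b}, so that the bracketed main term in \Cref{lem:CharExp} is $e^{-s^2/2}\bigl(1+Q(s)/\sqrt{n}\bigr)$. Using the identity $\int_{\reals}e^{isx}(-1)^k\fn^{(k)}(x)\,dx=(is)^k e^{-s^2/2}$, the function $e^{-s^2/2}Q(s)$ is the Fourier transform of $-\tfrac a6\fn^{(3)}-b\,\fn'$, hence $e^{-s^2/2}\bigl(1+Q(s)/\sqrt{n}\bigr)$ is the Fourier--Stieltjes transform of
\[
G_n(x)\colonequals \fN(x)+\frac1{\sqrt{n}}P(x)\fn(x),\qquad P(x)\colonequals -\frac a6(x^2-1)-b,
\]
a quadratic polynomial (depending on $\mu$ only through $b$; write $P=P_\mu$). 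Note $G_n(-\infty)=0$, $G_n(+\infty)=1$, $G_n$ has bounded variation, and $\sup_x|G_n'(x)|\le \sup_x\fn(x)+C n^{-1/2}=:m_n$ is bounded uniformly in $n$ and $\mu\in\Omega$.

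Next, set $F_{n,\theta,\mu}(x)\colonequals \Prob_\mu\bigl(\tfrac{S_{\theta,n}(h_\theta)-nA_\theta}{\sigma_\theta\sqrt{n}}\le x\bigr)$; this is a genuine distribution function with $\hat F_{n,\theta,\mu}(s)=\EXP_\mu\bigl(e^{is(S_{\theta,n}(h_\theta)-nA_\theta)/(\sigma_\theta\sqrt{n})}\bigr)$. By the smoothing inequality, for every $T>0$,
\[
\sup_{x}\bigl|F_{n,\theta,\mu}(x)-G_n(x)\bigr|\le \frac1\pi\int_{-T}^{T}\Bigl|\frac{\hat F_{n,\theta,\mu}(s)-\hat G_n(s)}{s}\Bigr|\,ds+\frac{24\,m_n}{\pi T}.
\]
Given $\eps>0$, choose $M$ with $24 m_n/(\pi M)<\eps/2$ for all $n$ and take $T=T_n=M\sqrt{n}$; it then suffices to bound the integral by $(\eps/2)n^{-1/2}$ for all large $n$, uniformly in $\mu\in\Omega$ and for $\theta$ near $0$. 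In the range $|s|<\bar\sigma\delta\sqrt{n}$ (which for large $n$ contains any fixed $|s|\le A$), \Cref{lem:CharExp} gives $|\hat F_{n,\theta,\mu}(s)-\hat G_n(s)|\le R(|s|)\bigl(e^{-s^2/4}o(n^{-1/2})+\cO(\kappa^n)\bigr)$; dividing by $|s|$ (legitimate since $R(|s|)/|s|$ is bounded near $0$ by \Cref{rem:a&b}) and integrating, the $e^{-s^2/4}$-term contributes $o(n^{-1/2})$ --- split at $|s|=A$, use dominated convergence on $|s|\le A$ together with $\int_{|s|>A}(R(|s|)/|s|)e^{-s^2/4}\,ds\to0$ as $A\to\infty$ --- while the $\cO(\kappa^n)$-term contributes $\lesssim n^{C}\kappa^n=o(n^{-1/2})$.

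For the large-frequency range $\bar\sigma\delta\sqrt{n}\le|s|\le M\sqrt{n}$, the rescaled parameter $u=s/(\sigma_\theta\sqrt{n})$ ranges over a fixed compact subset of $\reals\setminus\{0\}$ (using $\bar\sigma\le\sigma_\theta\le\tilde\sigma$ from \eqref{BarTildesigma}), and by \eqref{eq:CharFunc} one has $\hat F_{n,\theta,\mu}(s)=e^{-isnA_\theta/(\sigma_\theta\sqrt{n})}\,m\bigl(\cL^n_{\theta,iu}\rho_\mu\bigr)$. Combining \Cref{asm:gamma}(4) with the continuity and Keller--Liverani stability of the spectral picture of $\cL_{\theta,iu}$ (as in \Cref{WeakPert}, \Cref{StrongPert} and \cite{KellerLiverani}), the spectral radius of $\cL_{\theta,iu}$ is bounded by some $q<1$ uniformly over that compact set of $u$ and over $\theta$ near $0$, so $|m(\cL^n_{\theta,iu}\rho_\mu)|\le Cq^n\|\rho_\mu\|_{\cC}$, while $|\hat G_n(s)|$ is exponentially small there; hence this part of the integral is $\lesssim q^n\log n=o(n^{-1/2})$. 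Adding the three pieces, for $n$ large the integral is $<(\eps/2)n^{-1/2}$, so $\sup_x|F_{n,\theta,\mu}(x)-G_n(x)|<\eps n^{-1/2}$; since $\eps$ was arbitrary this is $o(n^{-1/2})$, uniformly in $\mu\in\Omega$ (through $\sup_{\mu\in\Omega}\|\rho_\mu\|_\cC<\infty$) and as $\theta\to0$ (through the corresponding uniformities in \Cref{lem:CharExp}, \Cref{cor:CtsEigenV} and \Cref{lem:ProjBds}).

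The main obstacle is the large-frequency range: one must upgrade the \emph{pointwise} spectral statement of \Cref{asm:gamma}(4) to a bound on the spectral radius of $\cL_{\theta,iu}$ that is uniform both over $u$ in compact subsets of $\reals\setminus\{0\}$ and over $\theta$ near $0$. This rests on quasi-compactness (so that outside a fixed disk the spectrum consists of finitely many eigenvalues depending continuously on $(\theta,u)$) together with a compactness argument, and needs care because the uniform Doeblin--Fortet estimates in \Cref{asm:alpha} and \Cref{asm:beta} are only posited for small $|s|$. A secondary bookkeeping point is that a naive use of the smoothing inequality only yields $\cO(n^{-1/2})$; the improvement to the claimed $o(n^{-1/2})$ forces the outer cutoff $M$ to be chosen large depending on $\eps$ (equivalently, to be sent to infinity).
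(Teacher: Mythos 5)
Your proposal is correct and follows essentially the same route as the paper's proof: define $P$ by Fourier inversion of $e^{-s^2/2}(1+Q(s)/\sqrt{n})$, apply the Berry--Ess\'een smoothing inequality with cutoff $T=M\sqrt{n}$ for $M$ large depending on $\eps$, use \Cref{lem:CharExp} together with the boundedness of $R(|s|)/|s|$ near $0$ on the range $|s|<\bar\sigma\delta\sqrt{n}$, and handle the large-frequency range by the change of variables $u=s/(\sigma_\theta\sqrt{n})$, \Cref{asm:gamma}(4), the $(\theta,s)$-continuity of $\cL_{\theta,is}$ in $\cL(\cB,\wt\cB)$, and compactness of $[-\delta,\delta]\times[\delta\tilde\sigma^{-1},M\bar\sigma^{-1}]$ to get a uniform geometric bound $\|\cL^n_{\theta,is}\|_{\cB,\wt\cB}<\eta^n$. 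The "main obstacle" you flag is resolved exactly as you suggest (no Doeblin--Fortet bound is needed there, only norm continuity plus compactness), so there is no gap.
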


\begin{proof} Note that under the assumptions, we have the conclusions of \Cref{lem:CharExp}. Since we have an expansion for the characteristic functions, \eqref{eq:CharExpansion}, we could adapt the standard proof for iid sequences in \cite[Chapter XVI]{Feller}. 
Define $P$ to be the polynomial such that 
\begin{equation}\label{eq:P}
\cE_{n}(x)=\fN(x)+\frac{P(x)}{\sqrt{n}} \fn(x),
\end{equation}
where $\cE_{n}(s)$ is defined by
\begin{equation}\label{eq:cE}
    \widehat \cE_{n}(s) \coloneqq \int_{\mathbb R} e^{-isx}\, d\cE_{n}(x)\coloneqq e^{-s^2/2}+  \frac{e^{-s^2/2}}{\sqrt{n}}Q(s)
\end{equation}
where $Q$ is given by \eqref{EdgePolyFourier} with $\theta=0$. Such $P$ exists and can be written down explicitly as
\begin{equation}\label{eq:EdgePoly}
     P(x)=\frac{(\log\lambda_0)^{(3)}(0)}{6(i\sigma_0)^3 }(1-x^2)+\frac{m(\Pi^{(1)}_0\rho_\mu)}{i\sigma_0}.
\end{equation}

Next, from the Berry-Ess\'een inequality (see \cite[Chapter XVI.2]{Feller}), for each $T>0$, 
\begin{equation}\label{BerryEsseen1}
\left|\Prob_{\mu}\left(\frac{S_{\theta,n}(h_\theta)-nA_\theta}{\sigma_\theta\sqrt{n}} \leq x\right)-\cE_{n}(x)\right|\le \frac 1\pi\int_{-T}^{T}\bigg|\frac{\EXP_{\mu}\big(
e^{is\frac{S_{n,\theta}(h_\theta)-nA_{\theta}}{\sigma_\theta \sqrt{n}}}\big)-\widehat \cE_{n}(s)}{s}\bigg|\, ds+\frac{C_0}{T},
\end{equation}
where and $C_0$ is independent of $T$ and $\theta$. See \cite[Chapter XVI.3,4]{Feller} for a detailed discussion of the Berry-Ess\'{e}en inequality.

Now, we estimate the right hand side of \eqref{BerryEsseen1} for an appropriate choice of $T$. For $\ve >0$, choose $B> \max \{C_0\eps^{-1},\bar\sigma \delta\}$. Then 
\begin{align}\label{BEOrdr}
\frac{1}{\pi}\int_{-B\sqrt{n}}^{B\sqrt{n}}\bigg|\frac{\EXP_{\mu}\big(
e^{is\frac{S_{n,\theta}(h_\theta)-nA_{\theta}}{\sigma_\theta \sqrt{n}}}\big)-\widehat \cE_{n}(s)}{s}\bigg|\, ds+\frac{C_0}{B\sqrt{n}} 
 \leq I_1 + I_2+ I_3 + \frac{\ve}{\sqrt{n}} 
\, 
\end{align}
where
\begin{align*}
I_1 &=\frac 1\pi\int_{|s|<\delta\bar \sigma \sqrt{n}}\bigg|\frac{\EXP_{\mu}\big(
e^{is\frac{S_{n,\theta}(h_\theta)-nA_{\theta}}{\sigma_\theta \sqrt{n}}}\big)-\widehat \cE_{n}(s)}{s}\bigg|\, ds\, ,  \\
I_2 &=\frac 1\pi\int_{\delta\sqrt{n}\leq |s|\le B\sqrt{n}}\bigg|\frac{\EXP_{\mu}\big(
e^{is\frac{S_{n,\theta}(h_\theta)-nA_{\theta}}{\sigma_\theta \sqrt{n}}}\big)}{s}\bigg|\, ds\, , \\
I_3 &=\frac 1\pi\int_{|s|\geq \delta\sqrt{n}} \bigg|\frac{\widehat{\cE}_{n}(s)}{s}\bigg|\, ds.
\end{align*}

From \eqref{eq:CharExpansion} and \Cref{rem:a&b}, we have that 
$$I_1= \frac 1\pi\int_{|s|<\delta\bar \sigma \sqrt{n}}\frac{R(|s|)}{|s|} \left[e^{-s^2/4}\cdot o\Big(\frac {1}{\sqrt{n}}\Big)+ \cO(\bar\kappa^n)\right] \, ds = o(n^{-1/2})$$
where the implied constants are independent of $\theta$ and $\mu$.
Note that there exists a constant $C$ such that $|\widehat{\cE}_{n}(s)|\leq C e^{-s^2/4}$ for all $s\in \reals$. 
Therefore,
$$I_3 \leq \frac C\pi\int_{|s|\geq \delta\sqrt{n}} \frac{e^{-s^2/4}}{|s|}\, ds = \cO(e^{-c'n})\, ,$$
for some $c'>0$. Because our choice of $\ve >0$ is arbitrary, if $I_2=o(n^{-1/2})$, then the proof is complete. To show this, we change the variables in $I_2$ to obtain
\begin{align*}
I_2 &=\frac{1}{\pi}\int_{\delta \sigma^{-1}_\theta <|s|<B\sigma^{-1}_\theta}\bigg|\frac{\EXP_{\mu}\big(
e^{is(S_{n,\theta}(h_\theta)-nA_{\theta})}\big)}{s}\bigg|\, ds\, \leq\frac{1}{\pi} \int_{\delta\tilde \sigma^{-1} <|s|<B\bar \sigma^{-1}} \frac{|m(\cL^n_{\theta, is}(\rho_\mu))|}{|s|}\, ds.
\end{align*}
Note that $(\theta,s) \mapsto \cL_{\theta,is} \in \cL(\cB,\wt \cB)$ is continuous 
and hence, so is $\cL^n_{\theta,is}$. Since the spectral radius of $\cL_{\theta,is}$ is $\lim_{n\to \infty} \|\cL^n_{\theta,is}\|^{1/n}_{\cB,\wt \cB}$, the \Cref{asm:gamma}(4) 
gives us the existence of $n_{\theta,s}$ such that  $$\|\cL^{n_{\theta,s}}_{\theta,is}\|_{\cB,\wt \cB}^{1/n_{\theta,s}}<1.$$ By the continuity of $(\theta,s) \mapsto \|\cL_{\theta,is}\|_{\cB,\wt \cB}$, and the compactness of 
$[-\delta,\delta] \times [\delta\tilde \sigma^{-1}, B\bar \sigma^{-1}]$ there exists a $\eta \in (0,1)$ and $n_0$ such that 
$$\|\cL^{n}_{\theta,is}\|_{\cB,\wt \cB}<\eta^n$$ for all $n\geq n_0$ and for all $(\theta, s)\in [-\delta,\delta]\times [\delta\tilde \sigma^{-1}, B\bar \sigma^{-1}]$. 
As a result, 
\begin{align*}
I_2 \leq \frac{\eta^n}{\pi}\|\rho_\mu\|_{\cC} \int_{\delta\tilde \sigma^{-1} <|s|<B\bar \sigma^{-1}} \frac{1}{|s|}\, ds = \cO(\eta^n). 
\end{align*}
This gives the required asymptotics. 
\end{proof}

\begin{rem}\label{rem:EdgePoly}
We can express the coefficients of the polynomial $P(x)$ using the asymptotic moments of $S_{0,n}(h_0)$. Assume for simplicity that $A_0=0$. Then, from the discussion in \Cref{sec:Moments}, we have that 
\begin{align*}
    m(\Pi^{(1)}_0 \rho_\mu) = \lim_{n \to \infty} i\EXP_\mu(S_{n,0}(h_0)) \,\,\text{and}\,\, (\log \lambda_0)^{(3)}(0) &= \lambda^{(3)}_0(0) = i^3 M_{\nu_0,0}. 
\end{align*}
So, the first-order continuous Edgeworth expansion takes the form:
$$\cE_n(x) = \fN(x) + \frac{1}{\sqrt{n}}\left[\frac{M_{\nu_0,0}}{6\sigma^3_0}(1-x^2)+\frac{\wt M_{\mu,0}}{\sigma_0}\right]\fn(x)$$
where 
\begin{align*}
    \wt M_{\mu,0} = \lim_{n\to \infty} \EXP_\mu(S_{n,0}(h_0)),\,\,\,\sigma^{2}_0 = \lim_{n\to \infty} \EXP_\mu\left(\frac{S_{n,0}(h_0)}{\sqrt{n}}\right)^2 \,\,\text{and}\,\, M_{\nu_0,0} = \lim_{n\to \infty} \EXP_{\nu_0}\left(\frac{S_{n,0}(h_0)}{n^{1/3}}\right)^3.
\end{align*}
In general, the same holds with $h_0$ replaced by $\wt h_0 = h_0 - A_0$.
\end{rem}
\begin{rem}\label{rem:Symmetry}
If $\mu=\nu_0$, then $\wt M_{\mu,0} = 0$. So, the first-order Edgeworth expansion captures the deviation from equilibrium. In addition, it captures the asymmetry of the stationary distribution because $M_{\nu_0,0}\sigma^{-3}_0$ corresponds to the \textit{skewness} of $\nu_0$. As a result, if $\mu=\nu_0$ and $\nu_0$ is symmetric, then the Gaussian approximation is as good as the bootstrap. We will observe this in our simulations.
\end{rem}
\begin{rem}\label{rem:IntialMeasure}
In practice, we may replace the initial measures $\mu \in \Omega$, by a family of initial measures $\{\mu_\theta\}$, i.e., the quantity of interest will be
$$\Prob_{\mu_\theta}(S_{n,\theta}(h_\theta) \leq x\cdot \sigma_\theta\sqrt{n}),$$
and $\theta$ may depend on $n$.
In this case, we let
\begin{equation*}
     P(x)=\frac{(\log\lambda_0)^{(3)}(0)}{6(i\sigma_0)^3 }(1-x^2)+\frac{m(\Pi^{(1)}_0\rho_{\mu_0})}{i\sigma_0}
\end{equation*}
so that the expansion does not depend on $\{\mu_\theta\}$, and in turn, on $\theta$. Due to \eqref{eq:thetaCharExp}, \eqref{eq:1stPoly} and the proof of the \Cref{thm:CtsEdgeExp}, the first order continuous Edgeworth expansion holds if we assume that there exists a space $\cC$ in \eqref{SpaceChain} such that $\cC \hookrightarrow \cX_3 \hookrightarrow \cX^{(+)}_3$ $$\lim_{\theta \to 0}\|\rho_{\mu_\theta} - \rho_{\mu_0}\|_{\cC} =0.$$
\end{rem}

\section{Examples}\label{sec:Exmp}
Here, we discuss some examples for which our abstract results apply. In  Sections \ref{sec:SmoothExp} and \ref{sec:PiecewiseExp}, we describe a class of dynamical systems and verify our abstract Assumptions \ref{asm:alpha}, \ref{asm:beta} and \ref{asm:gamma} for that class of systems. In \Cref{sec:simulationresults}, we have included simulation results corresponding to particular examples of these classes of dynamical systems. Moreover, we verify our assumptions for $V-$geometrically ergodic Markov chains in \Cref{sec:Markov} for unbounded observables. This example shows that our techniques are neither limited to dynamical systems nor to bounded observables. 

For a summary of standard techniques to verify our assumptions for dynamical systems, we refer the reader to \cite[Section 4.2]{FernandoPene}. Also, an elementary discussion related to the conditions in the \Cref{asm:alpha} can be found in \cite[Section 3.2]{LiveraniInvariant}.

\subsection{$C^1$-perturbations of smooth expanding maps}\label{sec:SmoothExp}
This is the class of dynamical systems that we mentioned as an example at the beginning. We make a few extra assumptions to guarantee that all our spectral assumptions hold. 

Let $g \in C^2(\bbT, \bbT)$ be such that there exists $\eta>2$ such that $|g'| \geq \eta$ (in fact, we may assume $\eta  >1$ and consider an iterate of $g$, $g^k$, with $|(g^k)'| >\eta$). Let $g_\theta,\,\, \theta \in [0,1]$ with $g_0 \colonequals g$ be 
such that for all $\theta, \bar\theta \in [0,1]$, $$d_{C^1}(g_\theta,g_{\bar \theta})\leq |\theta-\bar \theta|\,\,\,\text{and}\,\,\sup_{\theta} d_{C^2}(g_\theta, g_{\bar \theta}) \leq M_1$$ for some fixed $M_{1}$. Also, let $m$ be the Lebesgue measure on $\bbT$. Then, taking $\cB \colonequals \BV(\bbT)$ to be the space of functions of bounded variation on $\bbT$ with the $\BV$ norm $$\|\cdot \|_{\BV}= \Var[\cdot]+\|\cdot\|_{L^1}$$ where $\Var[\cdot]$ refers to the total variation of on $\bbT$ 
and $\wt \cB \colonequals L^1$, we have $\cB \subset \wt \cB$ and $\|\cdot\|_{L^1} \leq \|\cdot\|_{\BV}$. Note also that $\BV(\bbT)$ is a Banach algebra.

Let $h_\theta\in \cB 
$ be such that $$\lim_{\theta \to 0}\|h_\theta - h_0\|_{\BV}=0.$$ 
We assume that, for small $\theta$, $h_\theta$ is \textit{non-arithmetic}, i.e.,
\begin{equation}\label{eq:nonarithmetic}
    \text{$h_\theta$ is not $g_\theta-$cohomologous to a lattice-valued function in $\BV(\bbT)$}.
\end{equation}
That is, there do not exist $a, b \in \reals$  and $\eta \in \BV$ such that $h_\theta - (\eta \circ g_\theta - \eta) \in a+b\integers$, $m-$almost surely. In particular, this implies, \eqref{eq:CoBd}. Also, \eqref{eq:nonarithmetic} is reminiscent of the theorem due to Ess\'een that, in the iid case, the first order Edgeworth expansion holds if and only if the common distribution is non-lattice; see \cite{Ess}. Since $\BV(\bbT)$ is a Banach algebra, multiplication by $h_\theta$ is a bounded linear operator on $\BV$ and the convergence of $h_\theta$ to $h$ gives $\|H_\theta-H_\theta\|_{\cB,\cB} \to 0$ as $\theta\to 0$. 

Note that for each $\theta$, $s \mapsto \cL_{\theta, is}$ is analytic from $\reals$ to $\cL(\cB, \cB)$. This follows from
$$\cL_{\theta,is}(\cdot)=\cL_{\theta}(e^{ish_{\theta}} \cdot)=\sum_{k=0}^{\infty} \frac{(is)^k}{k!} \cL_{\theta}((h_\theta)^k\,\cdot )$$ where $(h_\theta)^k = h_\theta \times \dots \times h_\theta$, $k-$times. 
Note that $\cL_\theta ((h_\theta)^k\, \cdot) = \cL_\theta \circ H^k_\theta$. So, $\|\cL_\theta ((h_\theta)^k\, \cdot)\|\leq \|\cL_\theta\|\|H_\theta\|^k$ where $\|\cdot\|$ is the operator norm induced by the $\BV$ norm. Therefore, the series expansion converges absolutely. Since $\cL(\cB,\cB)$ is complete, this means that the series converges in $\cL(\cB,\cB)$, and it follows that, for each $\theta$, $s \mapsto \cL_{\theta,is}$ is an analytic family in $\cL(\cB,\cB)$. 
Since $\|\cL_{\theta,is} \ve\|_{L^1} \leq \|\ve\|_{L^1}$, the operators are in $\cL(\wt\cB, \wt\cB)$ as well.

We follow \cite[Example 3.1]{LiveraniInvariant}. From the proof there, it follows that for all $\psi \in \cB$,
$$\|\cL_{\theta}(\psi)-\cL_{\bar \theta}(\psi)\|_{L^1} \lesssim \|\psi\|_{\BV}|\theta -\bar \theta| $$ where the implied constant depends only on $M_1$ and $\eta$.  Next, for all $\vp \in C^1$ and $\psi \in \cB$, 
\begin{align*}
    \int (\cL_{\theta, is}\psi - \cL_{\bar\theta, is}\psi) \cdot \vp {\, dm } &= \int (\cL_{\theta}(e^{ish_\theta}\psi) - \cL_0 (e^{ish_{\bar\theta}}\psi)) \cdot \vp {\, dm }\\ &= \int e^{ish_\theta} \psi\cdot  (\vp \circ g_\theta - \vp \circ g_{\bar\theta}) {\, dm }+ \int \psi\cdot \vp \circ g_{\bar\theta}\cdot (e^{ish_\theta}-e^{is h_{\bar\theta}}) {\, dm }.
\end{align*}
The argument in \cite[Example 3.1]{LiveraniInvariant} can be applied to estimate the first term, i.e., 
$$\left|\int e^{ish_\theta} \psi \cdot (\vp \circ g_\theta - \vp \circ g_{\bar\theta}){\, dm } \right| \lesssim  \|e^{is h_\theta}\psi\|_{\BV}\|\vp\|_\infty |\theta-\bar\theta| \leq \|\vp\|_\infty\|\psi\|_{\BV}\|e^{ish_\theta}\|_{\BV} |\theta-\bar\theta| $$
with a constant independent of $\theta, \bar\theta$ and $s$. Since $|e^{ish_\theta(x)}-e^{ish_\theta(y)}| \leq |s||h_\theta(x)-h_\theta(y)|$,
we have $\Var[e^{ish_\theta}]\leq |s|\Var[h_\theta]$.
Since $h_\theta \to h$ in $\BV$, we have that $\Var[e^{ish_\theta}] \lesssim |s|$ where the implied constant is independent of $\theta$. 
Therefore, 
$$\left|\int e^{ish_\theta} \psi \cdot (\vp \circ g_\theta - \vp \circ g_{\bar\theta}){\, dm } \right| \lesssim   \|\vp\|_\infty\|\psi\|_{\BV}(|s|+1)\cdot |\theta - \bar\theta|.$$
For the second term, note that, 
\begin{align*}
    \left|\int \psi\cdot \vp \circ g_{\bar\theta} \cdot (e^{ish_\theta}-e^{is h_{\bar\theta}}){\, dm } \right| \leq \|{\vp}\|_{\infty} |s|\int|\psi||h_\theta-h_{\bar \theta}|\, dm 
    &\leq\|{\vp}\|_{\infty} |s| \|\psi(h_\theta-h_{\bar \theta})\|_{\BV} \\
    &\leq \|{\vp}\|_{\infty} \|\psi\|_{\BV} |s| \|h_\theta-h_{\bar \theta}\|_{\BV}.
\end{align*}
In fact, the estimates hold for $\vp \in L^\infty$ because $C^1(\bbT, \bbT)$ is dense in $L^\infty$. Next, taking the supremum over $\vp$ with $\|\vp\|_\infty \leq 1$, we have, 
$$\|\cL_{\theta, is}\psi - \cL_{\bar\theta, is}\psi\|_{L^1} \lesssim (|s|+1) \|\psi\|_{\BV} \left(|\theta-\bar\theta| +\|h_\theta - h_{\bar \theta}\|_{\BV}\right).$$
Therefore, 
\begin{align*}
    \|\cL_{\theta,is}\psi - \cL_{\bar \theta, i \bar s} \psi\|_{L^1} &\leq \|\cL_{\theta,is}\psi - \cL_{\bar \theta, i s} \psi\|_{L^1} + \|\cL_{\bar \theta, i s}\psi - \cL_{\bar \theta, i \bar s} \psi\|_{L^1} \\ &\lesssim (|s|+1) \|\psi\|_{\BV} \left(|\theta-\bar\theta| +\|h_\theta - h_{\bar \theta}\|_{\BV}\right) +\|(\cL_{\bar \theta, i s}- \cL_{\bar \theta, i \bar s})\psi\|_{\BV}   \\ 
    &\leq (|s|+1) \|\psi\|_{\BV} \left(|\theta-\bar\theta| +\|h_\theta - h_{\bar \theta}\|_{\BV}\right) +\|\cL_{\bar \theta, i s}- \cL_{\bar \theta, i \bar s} \|_{\BV}\|\psi\|_{\BV} \\ 
    &\leq\|\psi\|_{\BV}  \left(  \left(|\theta-\bar\theta| +\|h_\theta - h_{\bar \theta}\|_{\BV}\right) (|s|+1) +\|\cL_{\bar \theta, i s}- \cL_{\bar \theta, i \bar s} \|_{\BV}\right),
\end{align*}
and hence, 
$$\lim_{(\theta,s)\to (\bar\theta, \bar s)}\|\cL_{\theta,is} - \cL_{\bar \theta, i \bar s} \|_{\BV,L^1} = 0.$$

Since $g \in C^2$ 
on $\bbT$ and $|g'|>\eta$ and $g_\theta$ is $C^1$ close to $g$, for sufficiently small $\theta$, $|g'_\theta|>\eta$, and due to \cite[Lemma 1]{ButterleyEslami}, the following Doeblin-Fortet inequality,
$$\forall \psi \in \cB,\,\,\, \|\cL_{\theta,is}\psi\|_{\BV} \leq 2\gamma^{-1}\|\psi\|_{\BV}+C_{\gamma,\theta}(1+|s|)\|\psi\|_{L^1}\,,$$
(where $C_{\gamma,\theta}$ is uniformly bounded in $\theta$) holds. Iterating this $n$-times, we obtain \eqref{eq:UniformDF0}. Note that the non-arithmeticity of $h_\theta$ is essential to apply \cite[Lemma 1]{ButterleyEslami}.  

So far, we have checked \Cref{asm:alpha} with  $\cB=\BV(\bbT)$ and $\wt \cB= L^1(m)$, \Cref{asm:beta} with $\cX_0=\cX^{(+)}_3=\BV(\bbT)$ and any $p_0 \geq 1$, and finally, we verify \Cref{asm:gamma}. 

From \cite[Theorem II.5]{HennionHerve}, we have that the essential spectral radius of $\cL_{\theta, is}$ is at most $\lambda^{-1}$ and $\cL_{\theta, is}$ is quasi compact. In particular, $\cL_\theta$ has finitely many eigenvalues $\lambda_i$ such that $\lambda < |\lambda_i| \leq 1$. Further, due to the exactness of the transformation $f$, $1$ is the only eigenvalue of $\cL_0$ on the unit circle and it is simple. This is also true for $\cL_\theta$ because $g_\theta$ is uniformly expanding. 
Note that \Cref{asm:gamma}(3) is equivalent to \Cref{asm:gamma}(1) because $\cX_0=\cX^{(+)}_3=\cB$. \Cref{asm:gamma}(4) follows directly from \cite[Lemma 4.5]{FernandoPene} due to the non-arithmeticity assumption \eqref{eq:nonarithmetic}.

\begin{rem}
Note that, in \ref{asm:gamma}(1), we do not require simplicity of the eigenvalue $1$ of $\cL_\theta, \theta \neq 0$ (it is part of the conclusion of our results), but, in this special case, we have it due to structural stability of expanding maps.
\end{rem}

\subsubsection{Periodic Cubic Spline Approximations}\label{sec:CubicPiecewise} As a special case, we discuss the cubic spline approximations of $g$ (considering $g$ as a periodic function on $\reals$). This will be relevant to our simulations. Let $0 = z_0 < z_1 < \dots < z_n < z_{n+1} = 1$ (we work on [0,1] and identify $0$ and $1$, so that $z_0$ and $z_{n+1}$ overlap) with $g(z_i)=y_i$ for $i=1,\dots,n$. Let $\mathfrak{s}:[0,1]\to [0,1]$ be the \textit{periodic} cubic spline approximation of $g$ with mesh $\{z_i\}$. Then, 
\begin{itemize}[leftmargin=25pt]
    \item $\mathfrak{s}(x)=a_i+b_ix+c_ix^2+d_ix^3$ on $[z_i,z_{i+1}]$ for $i=0,\dots,n$,
    \item For $j=1,2$, $\mathfrak{s}^{(j)}(z_i-)=\mathfrak{s}^{(j)}(z_{i}+)$ when $i=1,\dots,n$ and $\mathfrak{s}^{(j)}(0+)=\mathfrak{s}^{(j)}(1-)$,
    \item $\mathfrak{s}(z_i)=y_i$ and $\mathfrak{s}(0)=\mathfrak{s}(1)$.
\end{itemize}
Then $\|g^{(j)}-\mathfrak{s}^{(j)}\|_{L^\infty} = \cO(\theta^{2-j})$  $j=0,1,2$ where  $\theta= \max\{z_{i+1}-z_{i}\,|\, 0 \leq i \leq n\}$ is the mesh-size; see \cite[Theorem 2.3.2]{Spline}. Therefore, writing $\mathfrak{s}_{\theta}$ for a spline approximation of $g$ with mesh-size $\theta \in (0,1/N)$ where $N$ is large, we obtain a family of $C^1-$perturbations of the $C^2$ expanding map $g$. 
 
\subsection{Linear spline approximations of piecewise expanding maps}\label{sec:PiecewiseExp}

Let $g:[0,1]\to [0,1]$ be a piecewise expanding map as in \cite{Liverani}. That is,
\begin{itemize}[leftmargin=25pt]
    \item $g$ is piecewise $C^2$ on $[0,1]$: There is a mesh $0 = z_0 < z_1 < \dots < z_n < z_{n+1} = 1$ such that  $g|_{(z_i,z_{i+1})}$ extends to a $C^2$ function on $[z_i,z_{i+1}]$  for all $i=0,\dots,n$.  
    \item $g$ is uniformly expanding: There exists $\eta>1$ such that $|g'|\geq \eta$.
    \item $g$ is a covering:  Let $$\cA_n = \bigvee_{k=0}^n g^{-k}(\{[z_{i},z_{i+1}], i=0,\dots,n\}).$$
    Then there exists $N_n$ such that for all $I \in \cA_n$, $g^{N_n}(I) = [0,1]$. 
\end{itemize}

In this case, as in the previous example, we can let let $m$ be the Lebesgue measure on $[0,1]$, $\cB\colonequals\BV[0,1]$ 
to be the space of functions of bounded variation on $[0,1]$ with the $\BV$ norm $$\|\cdot \|_{\BV}= \Var[\cdot]+\|\cdot\|_{L^1}$$ where $\Var[\cdot]$ refers to the total variation of a function on $[0,1]$,  
and $\wt \cB \colonequals L^1(m)$. Then, we have $\cB \subset \wt \cB$ and $\|\cdot\|_{L^1} \leq \|\cdot\|_{\cB}$, and that $\BV[0,1]$ is a Banach algebra as in the previous example.

Consider $\mathfrak{L}:[0,1]\to[0,1]$ where $\mathfrak{L}|_{[z_i,z_{i+1}]}=\mathfrak{s}_i:[z_i,z_{i+1}] \to [0,1]$ is the linear spline approximation of $g$ on $[z_i,z_{i+1}]$ with mesh $\{w_{j,i}\}$. That is, 
$$z_i = w_{0,i} < w_{1,i} < \dots < w_{m,i} < w_{m+1,i}=z_{i+1}$$ and
$$\mathfrak{s}_i(x) = \frac{w_{j+1,i}-x}{w_{j+1,i}-w_{j,i}}g(w^{+}_{j,i})+\frac{x-w_{j,i}}{w_{j+1,i}-w_{j,i}}g(w^{-}_{j+1,i}),\,\,\, x \in [w_{j,i},w_{j+1,i}] \equalscolon P_{j,i}.$$
{Note that is possible that $w_{j,i} = z_i$ or $z_{i+1}$ and $g$ has a discontinuity at $w_{j,i}$s. So, we have distinguished between the left (\,-\,) and right (+) values at $w_{j,i}$s. In what follows, this distinction is understood even when it is not explicitly stated.}

Since $\mathfrak L$ it is piecewise linear, it is piecewise $C^2$ on $[0,1]$ with mesh $\cup_{i} \{w_{j,i}\}$. Also,
$$\mathfrak s'_i(x)= \frac{g(w_{j+1,i}) - g(w_{j,i})}{w_{j+1,i}-w_{j,i}} = g'(\xi_{j,i})$$
for some $\xi_{j,i} \in [w_{j,i},w_{j+1,i}]$ due to the mean value theorem. Therefore, $|\mathfrak{L}'|\geq \eta > 1$, i.e., $\mathfrak L$ is uniformly expanding.
So, taking $\theta = \max\{w_{j+1,i}-w_{j,i}| i,j\}$, and writing $\mathfrak{L}_{\theta}$ for a linear spline approximation of $g$ with mesh-size $\theta \in (0,1/N)$ where $N$ is large, we obtain a family of piecewise expanding dynamical systems. 

We choose $h_\theta$, as in the previous example.  Let $h_\theta\in \BV[0,1]$ be such that $\|h_\theta - h_0\|_{\BV} \to 0$ as $\theta\to 0$. We assume that $h_\theta$ for small $\theta$ are non-arithmetic so that \eqref{eq:CoBd} is true. We also have $H_\theta  \in \cL(\BV[0,1],\BV[0,1])$, because $\BV[0,1]$ is a Banach algebra, and  $\|H_\theta-H_0\|_{\BV,\BV} \to 0$ as $\theta\to 0$ due to the strong convergence of $h_\theta$. Also, as before, for each $\theta$, $s \mapsto \cL_{\theta, is}$ is analytic from $\reals$ to $\cL(\BV[0,1],\BV[0,1])$.

To show that $(\theta,s)\mapsto \cL_{\theta,is}$ is continuous from $(0,1/N)\times \reals \to \cL(\cB,\wt\cB)$ we use \cite[Lemma 13]{Keller1982}. First, we introduce a distance $\mathfrak d$ on the class of piecewise expanding maps on $[0,1]$:
\begin{multline*}
    \mathfrak{d}(g,\wt g) = \inf \bigg\{\eps>0\, \Big|\, B \subseteq [0,1],\, d:[0,1] \to [0,1]\, \text{is a diffeomorphism}, \\ g|_{A}=\wt g\circ d|_A,\, m(B)>1-\eps,\, |d(x)-x|<\eps,\, \Big|\frac{1}{d'(x)}-1\Big|<\eps\bigg\}
\end{multline*}
Then, 
\begin{equation}\label{eq:SkohorodEst}
   \| \cL - \wt\cL\|_{\cB,\wt \cB} \leq 12 \cdot \mathfrak d (g,\wt g).
\end{equation}
where $\cL$ and $\wt\cL$ are the transfer operators corresponding to $g$ and $\wt g$, respectively. 

We claim that $\mathfrak d(g,\mathfrak L_\theta) \to 0$ as the mesh size $\theta \to 0$. To see this, it is sufficient to construct a set $B_\theta \subset[0,1]$ and a diffeomorphism $d_\theta:[0,1]\to [0,1]$ such that  
$\mathfrak L_\theta|_{B_\theta}= g \circ d_\theta|_{B_\theta}$ and $m(B_\theta) \to 1$ and $d_\theta \to \text{id}$ (in $C^1$) as $\theta \to 0$. 

First, we restrict our attention to $P_{j,i}$s, and solve for $d$ by solving $g|_{B}=\mathfrak L \circ d|_B$
\begin{align*}
    g(x)=(\mathfrak L \circ d)(x)= \mathfrak{s}_i(d(x)) = \frac{w_{j+1,i}-d(x)}{w_{j+1,i}-w_{j,i}}g(w_{j,i})+\frac{d(x)-w_{j,i}}{w_{j+1,i}-w_{j,i}}g(w_{j+1,i})
\end{align*}
which gives us,
\begin{align*}
    d(x)= \frac{g(w_{j+1,i})-g(x)}{g(w_{j+1,i})-g(w_{j,i})}w_{j,i}+\frac{g(x)-g(w_{j,i})}{g(w_{j+1,i})-g(w_{j,i})}w_{j+1,i}
\end{align*}
which is well-defined because $g$ is strictly monotonic on $P_{j,i}$ (Note that, since $|g'|>1$ and $g'$ is continuous on each $[z_i,z_{i+1}]$, $g'$ should remain either strictly negative or strictly positive on $[z_i,z_{i+1}]$, and hence, the strict monotonicity). Also,
\begin{align*}
    d^\prime(x) = g'(x) \frac{w_{j+1,i}-w_{j,i}}{g(w_{j+1,i})-g(w_{j,i})} = \frac{g'(x)}{g'(\zeta_{j,i})}
\end{align*}
for some $\zeta_{j,i} \in \mathring{P}_{j.i}$ ($\colonequals$ the interior of $P_{j.i}$), 
and hence, 
\begin{align*}
    \left|\frac{1}{d^\prime(x)} - 1\right| =\left| \frac{g'(\zeta_{j,i})}{g'(x)} - 1\right| = \frac{|g'(\zeta_{j,i})-g'(x)|}{|g'(x)|} \leq \eta^{-1}\|g''\|_\infty |\zeta_{j,i}-x| \leq  \eta^{-1}\|g''\|_\infty \theta 
\end{align*}
since $g\in C^2(P_{j,i})$. Without loss of generality, assume that $\theta$ is small enough so that $\eta^{-1}\|g''\|_\infty \theta< c \sqrt{\theta}$ for suitable $c>0$ (chosen later). Also, because $g'$ has the same sign in $\mathring{P}_{j.i}$, $d^{\prime}>0$ on $\mathring{P}_{j.i}$. So, $d$ is strictly increasing on $\mathring{P}_{j.i}$. 
Also, it is easy to see that 
\begin{align*}
   \Big| d(x) - x \Big| \leq \frac{g(w_{j+1,i})-g(x)}{g(w_{j+1,i})-g(w_{j,i})} (x-w_{j,i})+\frac{g(x)-g(w_{j,i})}{g(w_{j+1,i})-g(w_{j,i})}(w_{j+1,i}-x) \leq w_{j+1,i} - w_{j,i} \leq \theta
\end{align*}

Next, let $\wt \theta = \min\{w_{j+1,i}-w_{j,i}| i,j\}$. To construct $B_\theta$, from each $[z_i,z_{i+1}]$ delete intervals of length $\wt \theta^2$ centered at each $w_{j,i} \neq z_i, z_{i+1}$ and at $z_i$ and $z_{i+1}$ remove intervals of length $\wt \theta^2/2$ with $z_i$ or $z_{i+1}$ as an endpoint and lying inside $[z_i,z_{i+1}]$. Call the remaining subset of $[0,1]$ as $B_\theta$. Since we remove $\cO(\wt \theta^{-1})$ number of such intervals, the total length of the intervals deleted is of order $\wt \theta^2 \times \wt \theta^{-1} = \wt \theta \leq \theta$ so that $m(B_\theta)> 1 - \cO(\theta)$. We assume, without loss of generality, that $\theta$ is small enough so that $m(B_\theta) > 1- c \cdot  \sqrt{\theta}$. 

Finally, we extend $d$ from $[0,1]\setminus B_\theta$ to $[0,1]$ such that $d$ is a diffeomorphism such that {for all $x\in [0,1]$,}
\begin{equation}\label{eq:DiffeoBds}
    \Big| d(x) - x \Big| <  c\sqrt{\theta},\,\,\,\text{and}\,\,\, \left|\frac{1}{d^\prime(x)} - 1\right| < c\sqrt{\theta}.
\end{equation}
Since $d^\prime>0$ on $\cup \mathring{P}_{j.i}$, it is enough to define $d$ on $B_\theta$ 
so that it is differentiable and $d^\prime>0$ on $[0,1]$ (in order to ensure that $d$ has a differentiable inverse). It is easy to see that a continuous and positive extension of $d'$ yields a unique $d$ that is strictly increasing and differentiable. In fact, we extend $d^\prime$ to $[\wt \zeta, \wh \zeta]$ where $\wt \zeta =w_{j,i}-\wt \theta^2/2$ and $\wh \zeta = w_{j,i}+\wt \theta^2/2$ are the end points of the removed interval centred at $w_{j,i}$ as a continuous (positive) curve joining $d^\prime(\wt \zeta)$ and $d^\prime(\wh \zeta)$. The only extra condition this extension of $d'$ should satisfy is 
\begin{equation}\label{eq:AreaCnd}
    \int_{\wt\zeta}^{\wh \zeta} d^\prime (x) \, dx =  d(\wh \zeta) - d(\wt \zeta).
\end{equation}

To show that this is possible, we compute 
$d(\wh \zeta ) - d(\wt \zeta )$. To make the notation simpler we denote the three consecutive mesh points in the expressions for $d(\wh \zeta )$ and $d(\wt \zeta )$ by $w_1, w_2(=w_{j,i})$ and $w_3$ with  $w_1< w_2 < w_3$. 

First, note that 
$d(\wt \zeta) = w_2 - \alpha (w_2-w_1)$ and $d(\wh \zeta_{j,i}) = w_2+\beta(w_3-w_2)$ where
$$\alpha=\frac{g( w_2^-)-g(\wt \zeta)}{g(w_2^-)-g(w_1)}=\frac{g( w_2^-)-g(\wt \zeta)}{g'(\wt \eta)(w_2-w_1)},\,\,\, \beta = \frac{g(\wh \zeta )-g(w_2^+)}{g(w_3)-g(w_2^+)}=\frac{g(\wh \zeta )-g(w_2^+)}{g'(\wh \eta)(w_3-w_2)}$$
for some $\wt\eta \in (w_1,w_2) $ and $\wh \eta \in (w_2,w_3)$.
Then
\begin{align*}
    0< d(\wh \zeta)-d(\wt \zeta) &= \beta(w_3-w_2)+ \alpha (w_2-w_1)\\ &= \frac{g( w_2^-)-g(\wt \zeta)}{g'(\wt \eta)}+ \frac{g(\wh \zeta )-g(w_2+)}{g'(\wh \eta)}= \left(\frac{g'(\wt \xi\,)}{g'(\wt \eta)}+ \frac{g'(\wh \xi\,)}{g'(\wh \eta)}\right)\frac{\wt\theta^2}{2} 
\end{align*}
for some $\wt \xi \in (\wt \zeta, w_2)$, $\wh \xi \in (w_2,\wh \zeta)$. Note that
$$\left|\frac{g'(\wt \xi\,)}{g'(\wt \eta)} - 1 \right|= \frac{|g'(\wt \xi\,)-g'(\wt \eta)|}{|g'(\wt \eta)|} = \frac{|g''(\wt \chi)|}{|g'(\wt \eta)|} |\wt \xi-\wt \eta|$$
for some $\wt \chi$ between $\wt \xi$ and $\wt \eta$ (and similarly, for $ \sim $ replaced with $ \wedge$) because $g$ extends to a function in $C^2[z_i,z_{i+1}]$. So, there exists $\wt c>1$ (independent of $z_i$s) such that 
$$ 1 - \wt c \cdot \theta  < \frac{1}{2}\left(\frac{g'(\wt \xi\,)}{g'(\wt \eta)}+ \frac{g'(\wh \xi\,)}{g'(\wh \eta)}\right) < 1+ \wt c \cdot \theta.$$
Therefore,
$$  1 - \wt c \cdot \theta < \frac{d(\wh \zeta) - d(\wt \zeta)}{\wh \zeta - \wt \zeta} <  1 + \wt c \cdot \theta$$
where $\wt c>0$ works for all removed intervals in $[0,1]$.

We have to maintain that 
$$(1+c\sqrt{\theta})^{-1} < d^\prime(x)  < (1-c\sqrt{\theta})^{-1},\,\, x \in [\wt \zeta, \wh \zeta].$$
in order to guarantee the second inequality in $\eqref{eq:DiffeoBds}$. 
By choosing $d'$ to be close to the constant value $(1+c\sqrt{\theta})^{-1}$ or $(1-c\sqrt{\theta})^{-1}$ on most of $[\wt \zeta,\wh \zeta]$, we can make sure the function
$$ d' \mapsto \text{AVG}(d')\colonequals\frac{1}{\wh \zeta - \wt \zeta} \int_{\wt\zeta}^{\wh \zeta} d^\prime (x) \, dx $$
takes values as close as we want to $(1+c\sqrt{\theta})^{-1}$ or $(1-c\sqrt{\theta})^{-1}$, respectively. Also, by choosing $c$ sufficiently large, 
$$ (1+c\sqrt{\theta})^{-1}< 1 - \wt c \cdot \theta < 1 + \wt c \cdot \theta < (1-c\sqrt{\theta})^{-1}. $$
Since, $\text{AVG}(\cdot): C[\wt \zeta, \wh \zeta] \to \reals$ is continuous, the intermediate value theorem \cite[Theorem 4.22]{Rudin} implies 
that there is a continuous function $d^{\prime}$ such that \eqref{eq:AreaCnd} is satisfied. As the final step, we have to check whether the first inequality in \eqref{eq:DiffeoBds} holds. To this end, note that
\begin{align*}
    d(x)-x = \int_{\wt \zeta}^{x} d'(y) \, dy  + (d(\wt \zeta) - \wt \zeta) + (\wt\zeta - x),
\end{align*}
and hence,
\begin{align*}
    |d(x)-x| 
    &\leq  (1-c\sqrt{\theta})^{-1}|\wt\zeta - x|  + |d(\wt \zeta) - \wt \zeta)| + |\wt\zeta - x|\\ &\leq (1-c\sqrt{\theta})^{-1}\wt \theta^2 + \theta + \wt \theta^2 < c\sqrt{\theta}
\end{align*}
for $\theta$ sufficiently small. Therefore, $d^\prime$ extends to a continuous and strictly increasing function on $[0,1]$ in such a way that $d$ is continuous and satisfies \eqref{eq:DiffeoBds}. Since $d$ is a diffeomorphism such that $$g|_{B_\theta}=\wt g\circ d|_{B_\theta},\, m(B_\theta)>1- c \sqrt{\theta},\, |d(x)-x|<c \sqrt{\theta},\, \Big|\frac{1}{d'(x)}-1\Big|<c  \sqrt{\theta},$$
we have $\mathfrak{d}(g,\mathfrak L_\theta) \leq c \cdot  \sqrt{\theta}.$
Hence, taking $\cL_\theta$ and $\cL_0$ to be the transfer operators of $\mathfrak L_\theta$ and $g$, respectively, we have from \eqref{eq:SkohorodEst},
\begin{equation}\label{eq:NormEstimatExp}
    \| \cL_\theta - \cL_0\|_{\cB,\wt \cB}  \leq 12 c \cdot \sqrt{\theta}.
\end{equation}

As a result, we have the continuity at $(0,0)$: 
\begin{align*}
    \|\cL_{\theta,is}-\cL_{0}\|_{\BV, L^1} &\leq \|\cL_{\theta,is}-\cL_\theta\|_{\BV, L^1}+\|\cL_\theta - \cL_0\|_{\BV, L^1} \to 0,\,\,\,\text{as}\,\,\, (\theta,s)\to (0,0).
\end{align*}
In fact, we have continuity in a neighbourhood of $(0,0)$:
\begin{align*}
    \|&\cL_{\theta,is}\psi - \cL_{\bar \theta, i \bar s} \psi\|_{L^1} \\&\leq \|\cL_{\theta,is}\psi - \cL_{\bar \theta, i s} \psi\|_{L^1} + \|\cL_{\bar \theta, i s}\psi - \cL_{\bar \theta, i \bar s} \psi\|_{L^1} \\ &\leq \|\cL_\theta(e^{is h_\theta} \psi) -\cL_{\bar \theta}(e^{is h_{\bar \theta}}\psi) \|_{L^1} +  \|\cL_{\bar \theta, i s}\psi - \cL_{\bar \theta, i \bar s} \psi\|_{\BV} 
   \\  &\leq \|(\cL_\theta - \cL_{\bar \theta})(e^{is h_\theta} \psi) \|_{L^1} +  \|\cL_{\bar \theta}((e^{is h_\theta} - e^{is h_{\bar \theta}})\psi) \|_{L^1}+ \|(\cL_{\bar \theta, i s}- \cL_{\bar \theta, i \bar s}) \psi\|_{\BV}
   \\ &\leq \|\cL_\theta - \cL_{\bar \theta}\|_{\BV, L^1}\|e^{ish_\theta}\psi\|_{\BV} + \|\cL_{\bar\theta}\|_{L^1,L^1}\|\|(e^{is h_\theta} - e^{is h_{\bar \theta}})\psi\|_{L^1}+\|\cL_{\bar \theta, i s}- \cL_{\bar \theta, i \bar s} \|_{\BV}\|\psi\|_{\BV}\\
   &\leq \mathfrak d(\mathfrak L_\theta,\mathfrak L_{\bar \theta})|s|\|h_\theta\|_{\BV}\|\psi\|_{\BV} + \|\cL_{\bar\theta}\|_{L^1,L^1}|s|\|(h_\theta - h_{\bar \theta})\psi\|_{L^1}+\|\cL_{\bar \theta, i s}- \cL_{\bar \theta, i \bar s} \|_{\BV}\|\psi\|_{\BV}\\
   &\leq \mathfrak d(\mathfrak L_\theta,\mathfrak L_{\bar \theta})|s|\|h_\theta\|_{\BV}\|\psi\|_{\BV} + \|\cL_{\bar\theta}\|_{L^1,L^1}|s|\|h_\theta - h_{\bar \theta}\|_{\BV}\|\psi\|_{\BV}+\|\cL_{\bar \theta, i s}- \cL_{\bar \theta, i \bar s} \|_{\BV}\|\psi\|_{\BV}
\end{align*}
Therefore, 
\begin{align}\label{eq:ExpNorm2}
    \|\cL_{\theta,is} - \cL_{\bar \theta, i \bar s}\|_{\BV,L^1} &= \sup_{\|\psi\|_{BV}\leq 1}\|\cL_{\theta,is}\psi - \cL_{\bar \theta, i \bar s} \psi\|_{L^1} \nonumber \\ &\leq  \mathfrak d(\mathfrak L_\theta,\mathfrak L_{\bar \theta})|s|\|h_\theta\|_{\BV} + \|\cL_{\bar\theta}\|_{L^1,L^1}|s|\|h_\theta - h_{\bar \theta}\|_{\BV}+\|\cL_{\bar \theta, i s}- \cL_{\bar \theta, i \bar s} \|_{\BV}
\end{align}
Assume $\theta>\bar \theta$. Then, $\mathfrak L_{\bar \theta}$ is a piece-wise expanding map and $\mathfrak L_{\theta}$ is a spline approximation of $\mathfrak L_{\bar \theta}$. So, from the general analysis we did, we know that $\mathfrak d (\mathfrak L_\theta,\mathfrak L_{\bar \theta}) \to 0$ as $\theta \to \bar\theta^+$. When $\bar \theta > \theta$, $\mathfrak L_{\bar\theta}$ is a spline approximation of the piecewise expanding map $\mathfrak L_{\theta}$. So, when $\bar \theta$ and $\theta$ are close, $\mathfrak d (\mathfrak L_\theta,\mathfrak L_{\bar \theta})$ is close to $0$ as well. So,  $\mathfrak d (\mathfrak L_\theta,\mathfrak L_{\bar \theta}) \to 0$ as $\theta \to \bar\theta ^-$. So, from \eqref{eq:ExpNorm2}, we have,
$$\lim_{(\theta,s)\to (\bar\theta,\bar s)}\|\cL_{\theta,is} - \cL_{\bar \theta, i \bar s}\|_{\BV,L^1} = 0$$

\Cref{asm:beta} with $\cX_0=\cX^{(+)}_3=\BV([0,1])$ and any $p_0 \geq 1$, and \Cref{asm:gamma} follows as in the previous example from the results in \cite{ButterleyEslami, FernandoPene}. We essentially use the non-arithmeticity of $h_\theta$ here. However, $\cL_\theta$ need not be a covering for \cite[Lemma 1]{ButterleyEslami} to hold. 

\subsection{Markov models}\label{sec:Markov}

In addition to dynamical systems, our continuous Edgeworth expansion result in \Cref{thm:CtsEdgeExp} is also applicable in the Markovian setting. Several ideas in \cite{Ferre, Francoise, HervePene, FernandoPene} give us conditions to establish first-order Edgeworth expansions under the optimal moment conditions of the iid case. In the discussion below, using Markov integral operators in place of transfer operators, we show that first-order continuous Edgeworth expansions hold for $V-$geometrically ergodic Markov processes, and hence, the bootstrap has the desired asymptotic accuracy in that setting.

Let $(X,\cF, m)$ be a Borel probability space and $J \subset \reals $ be a neighbourhood of $0$. Recall that $\cM_1(X)$ is the set of Borel probability measures on $X$. For each $\theta \in J$, let $\{x^\theta_n\}_{n\geq 0}$ be a time homogeneous Markov chain on $X$ with initial measure $\mu_\theta \in \cM_1(X)$ and the Markov transition operator
$$\cL_\theta(\vp)(x)=\int_{X} \vp(y) d\bP_\theta(x,dy) = \EXP(\vp | x_0=x),\,\,\, \vp \in L^1(X). $$

\subsubsection{$V-$geometrically ergodic Markov chains}
Let $V:X\to [1,\infty)$ be a measurable function with  
$\EXP_{m}(V)<\infty$. Let $\|f\|_{L^\infty_V} = \sup_{x \in X}|f(x)|/V(x)$ for $f:X\to \complex$, let $$L^\infty_V \colonequals  \left\{f:X \to \complex\Big| \|f\|_{L^\infty_V} <\infty\right\},$$
and let $\|\cdot\|_{L^\infty_V, L^\infty_V}$ be the operator norm. Now suppose $\{x^0_n\}$ is irreducible and aperiodic. In addition, suppose $\{x^0_n\}$ is $V-$geometrically ergodic, i.e., there exist $\nu \in \cM_1(X)$, $C>0$ and $\kappa \in (0,1)$ such that $\EXP_\nu(V)<\infty$ and
\begin{equation}\label{GeomErg}
    \|\cL^n -\EXP_{\nu}[\,\cdot\,] {\bf 1}_{X} \|_{L^\infty_V, L^\infty_V} \leq C\kappa^n. 
\end{equation}
As in \cite{Ferre}, we assume that 
\begin{displayquote}
\begin{enumerate}[leftmargin=*,itemsep=5pt]
    \item[(i)] there exist $N, L>0$ and $\kappa \in (0,1)$ such that $\cL^N_\theta V \leq \kappa^N V + L {\bf 1}_{X},\,\,\theta \in J$, 
    \item[(ii)] $\lim_{\theta \to 0} \|\cL_\theta - \cL\|_{L^\infty,L^\infty_V}=0.$
\end{enumerate}
\end{displayquote}

Let $h_\theta \in L^\infty_V,\, \theta \in J$ be such that $h_\theta \to h_0$ in $L^\infty_V$ as $\theta \to 0$ (often, this is vacuous in practice because we take $h_\theta = h_0$ for all $\theta$) and $h_\theta \in L^{3}$
for all $\theta$. We further assume as in \cite[Section 7]{Francoise} that
\begin{equation}\label{GeomRegular}
    \sup_{a=1,2,3}\sup_{j=0,\dots,a} \|\cL_\theta(h^{a-j}_\theta V^{j/3})\|_{L^\infty_{V^{a/3}}} < \infty.
\end{equation}
We note from \cite[Remark 7.6]{Francoise} that \eqref{GeomRegular} is true as soon as $\|h^3_\theta\|_{L^{\infty}_V}<\infty$ when the Markov chains are stationary.
Finally, we assume the following non-lattice condition in \cite[p.~435]{HervePene}: 
\begin{displayquote}
It is not the case that there exist $a\in \reals$ and $b>0$, a $\nu-$full $\cL-$absorbing set $U \in \cF$ and $\zeta \in L^\infty$ such that for all $x \in U, $ $h(y)+\zeta(y)-\zeta(x) \in a + b\integers.$
\end{displayquote}

\begin{rem}

Assuming a weak form of convergence in the Assumption (ii) as opposed to convergence in $\|\cdot\|_{L^\infty_V,L^\infty_V}$, 
allows us to for many well-studied family of Markov chains, for example, the
\textit{auto-regressive processes} in \Cref{eg:autoregressive} below; see \cite{Ferre} for details. 
\begin{exmp}\label{eg:autoregressive}
Let $X = \reals$ and $\{x^\theta_n\}_{n\geq 0}$ be defined by
$x^\theta_n = \theta \cdot x_{n-1} + \xi_n, \,\,\, n \geq 1$
where $x_0$ is a $\reals$-valued random variable, $\theta\in (-1,1)$ and $\xi_n$ is a sequence of absolutely continuous iid random variables that are independent of $x_0$. Then, one can show that $\{x^\theta_n\}$  is $V-$geometrically ergodic with $V(x)=1+|x|$, and $\|\cL_\theta - \cL_{\theta_0} \|_{L^\infty_V,L^\infty_V} \not\to 0$ but $\|\cL_\theta - \cL_{\theta_0} \|_{L^\infty,L^\infty_V} \to 0$ as $\theta \to \theta_0$. 
\end{exmp}
\end{rem}

Now, we verify our spectral assumptions. To this end, let $\cB=\cX_0=\complex \cdot {\bf 1}_X$, $\cX^{(+)}_a=\cX_{a+1}= L^\infty_{V^{(a+1)/3}}$ for $a\in \{0,1,2\}$ and $\cX^{(+)}_3=\cX_4 = \wt \cB= L^1$ as in \cite[Section 7]{Francoise} with $r=2$ there. Then, it is easy to see that $\cX_a, a=0,1,2,3,4$ forms a chain of Banach spaces of the form \eqref{SpaceChain} (here we use essentially the assumptions $V\geq 1$, $\EXP_m(V)<\infty$ and $\EXP_\nu(V)<\infty$). Recall that $\cL_{\theta,is}$ given by \eqref{eq:cL_theta_is}. It is straightforward that $\cL_{\theta,is}$ are linear operators on $\cB$. Applying \eqref{GeomRegular} 
with $a=j$, we have for all $\vp\in L^\infty_{V^{a/3}}$, 
\begin{align*}
    \|\cL_{\theta,is}(\vp)\|_{L^\infty_{V^{a/3}}} = \sup_x |(V(x))^{-a/3}\cL_{\theta,is}(\vp)(x)| \leq \|\vp\|_{L^\infty_{V^{a/3}}}\sup_x |(V(x))^{-a/3}\cL_{\theta}(V^{a/3})(x)|\lesssim \|\vp\|_{L^\infty_{V^{a/3}}}.
\end{align*}
So, $\cL_{\theta,is}$ are bounded linear operators on $L^\infty_{V^{a/3}}$ for all $a=1,2,3$. 

Now, under (i) and (ii), we have the following (see \cite[Theorem 1]{Ferre}) with $\nu_0\colonequals \nu$:
\begin{displayquote}
\begin{enumerate}[leftmargin=*,itemsep=5pt]
    \item For each $\bar \kappa \in (\kappa,1)$, there exists $\eps$ such that for all $|\theta|<\eps$, $\{x^\theta_n\}$ has a unique invariant probability measure, $\nu_\theta$, such that $\nu_\theta (V)<\infty$ and
    \begin{equation*}
    \sup_{|\theta|<\eps}\|\cL^n_\theta-\EXP_{\nu_\theta}[\,\cdot\,] {\bf 1}_{X} \|_{L^\infty_V, L^\infty_V} \leq C\bar \kappa^n, 
    \end{equation*}
    \item $\lim_{\theta \to 0} \sup_{\|\vp\|_{L^\infty} \leq 1} |\nu_\theta(\vp) - \nu_0(\vp)| =0.$
\end{enumerate}
\end{displayquote}
So, for all $\theta$ sufficiently close to $0$, $\{x^\theta_n\}$ is $V-$geometrically ergodic with the same rate $\bar \kappa$. Now, we fix $\bar \kappa$, and this fixes $\eps$, and we reduce the $\theta$ range from $J$ to $J \cap (-\eps,\eps)$. 

Next, for all $\vp \in L^\infty$,
\begin{align*}
    \|\cL_{\theta, is}\vp - \cL_{0,i\bar s}\vp\|_{L^\infty_V}
    &\leq \|\cL_{\theta}-\cL\|_{L^\infty,L^\infty_V}\|e^{ish_\theta}\vp\|_{L^\infty} + \|\cL_0\|_{L^\infty_V,L^\infty_V}\|(e^{ish_\theta}-e^{i\bar s h_0}) \vp\|_{L^\infty_V}\\
    &\leq \|\cL_{\theta}-\cL\|_{L^\infty,L^\infty_V}\|\vp\|_{L^\infty} + \|\cL_0\|_{L^\infty_V,L^\infty_V}\left(\|h_\theta - h_0\|_{L^\infty_V}+|s-\bar s|\|h_0\|_{L^\infty_V}\right)\|\vp\|_{L^\infty},
\end{align*}
and hence, $$\lim_{(\theta,s)\to (0,\bar s)}\|\cL_{\theta, is} - \cL_{0,i\bar s}\|_{L^\infty,L^\infty_V} = 0.$$
A similar argument gives that, for all $|\theta|<\eps$, $$\lim_{s \to \bar s} \|\cL_{\theta, is} - \cL_{\theta,i\bar s}\|_{L^\infty,L^\infty_V} = 0.$$
Due to \Cref{WeakerAlpha}, \Cref{rem:AltAssump1} and \Cref{rem:AltAssump2}, this weaker assumption is a sufficient replacement for \Cref{asm:alpha}(1) where $\cB=\complex \cdot {\bf 1}_{X}$ equipped with $\|\cdot\|_{L^\infty}$ and $\wt \cB=L^1$. \Cref{asm:alpha}(2) follows from (1) because 
\begin{align*}
    \|\cL^n_{\theta,is}(c\cdot {\bf 1}_{X})\|_{L^\infty} \leq |c|\|\cL^n_\theta({\bf 1}_{X})\|_{L^\infty} \leq |c|(C\bar \kappa^n + \|{\bf 1}_{X}\|_{L^\infty_V}) &= C\bar \kappa^n \|c\cdot {\bf 1}_{X}\|_{L^\infty}+\|c\cdot {\bf 1}_{X}\|_{L^\infty_V}\\
    &\leq C\bar \kappa^n \|c\cdot {\bf 1}_{X}\|_{L^\infty}+\EXP_m(V)\|c\cdot {\bf 1}_{X}\|_{L^1}
\end{align*}
for all $c \in \complex$.

Due to \cite[Theorem 7.5]{Francoise}, for $a=1,2,3$, 
$$\|\cL_{\theta,is}(\vp)\|_{L^\infty_{V^{a/3}}} \leq C \kappa^n \|\vp\|_{L^\infty_{V^{a/3}}} + \|\vp\|_{L^1},$$
for all $\vp \in L^\infty_{V^{a/3}}$. So, we have \Cref{asm:beta}(3). The same theorem gives the regularity required in \Cref{asm:beta}(1,2). From $$\|(h_\theta)^j \vp\|_{L^{(a+j)/3}} = \|(h_\theta)^j\|_{L^\infty_{V^{j/3}}}\|\vp\|_{L^\infty_{V^{a/3}}},$$ we note that $\cL_{\theta, 0}^{(j)}(\cdot)= \cL_\theta((ih_\theta)^j\,\cdot)$ is a linear operator from $L^{\infty}_{V^{a/3}}$ to $L^{\infty}_{V^{(a+j)/3}}$. So, due to \eqref{GeomRegular}, for all $\theta$, for all $a=0,1,2$ and for all $\vp \in L^\infty_{V^{a/3}}$, the Taylor expansion 
$$\cL_{\theta,is}(\vp) = \sum_{k=0}^{j}\frac{\cL_\theta((ih_\theta)^k \vp)}{k!}s^k + \|\vp\|_{L^\infty_{V^{a/3}}}o(s^j),\,\,\, s \in \reals$$
holds in $L^{\infty}_{V^{(a+j)/3}}$ for $j=1,\dots,3-a$ with $L^\infty_{V^{0}}$ is understood to be $\complex \cdot {\bf 1_X}$. 

We recall from \cite[Lemma 10.1]{HervePene} that \eqref{GeomErg} and (1) above implies that for all $\eta \in (0,1]$,
\begin{equation*}
    \|\cL_\theta^n -\EXP_{\nu}[\,\cdot\,] {\bf 1}_{X} \|_{L^\infty_{V^\eta}, L^\infty_{V^\eta}} \leq C\kappa^n,
\end{equation*}
and we have \Cref{asm:gamma}(1,3). \Cref{asm:gamma}(4) follows from the non-lattice assumption; see \cite[Section 5.2]{HervePene}. \Cref{asm:gamma}(2) is a given because $\bP_\theta(x,\cdot)$ are probability measures. In fact, we have the stronger conclusion that $\cL_\theta$ has a spectral gap of $1-\bar \kappa$ on $L^\infty_{V^{\eta}_\infty}$ for all $\eta \in (0,1]$ and $|\theta|<\eps$. 

\begin{rem}
In the special case of $V\equiv 1$, we obtain the case of uniformly ergodic Markov chains. We consider a single space $\cB = \wt \cB = L^\infty$ and most of our assumptions at the beginning of this section become vacuous, and we recover the results of \cite{DM} for \textit{bounded} observables. 
\end{rem}
\begin{rem}  
We could have considered the more general case of $(Y,\cG,\wt m)$ being another Borel probability space, $\{y_n\}_{n\geq 0}$ being a sequence of iid sequence of random variables on $Y$ with common distribution $\wt m$ and independent of $\{x^\theta_n\}$ for all $\theta \in J$,  
$h_\theta:X \times X \times Y \to \complex$, $X^\theta_n=h_\theta(x^\theta_{n-1},x^\theta_{n}, y_n)$. 
\end{rem}

\part{The Bootstrap}\label{part:Bootstrap}

The key idea behind the classical bootstrap is that the process of repeatedly resampling from an observed sample mimics the process of the original iid sampling from the whole population. However, iid sampling in the classical bootstrap cannot mimic the dependence structure within the dynamically generated data and is not applicable in the our setting. The goal of this part of the paper is to develop a generic bootstrap algorithm for dynamical systems and study its asymptotic accuracy.

\section{The Bootstrap Algorithm}\label{sec:BootstrapAlgo}
Consider the dynamical system characterized by transformation function $g:X\to X$, initial measure $\mu$, and an observable $h$, as in \eqref{eq:DataGenerating1}, \eqref{eq:DataGenerating2}, and \eqref{eq:estimator}. We assume $h$ is known, which is the case in most practical situations. However, $g$ and $\mu$ are usually unknown in practice.

Let $S_{n}(h)$, $A$, and $\sigma$ be defined by, as in \eqref{eq:BirkhoffSum}, \eqref{eq:Mean}, and \eqref{eq:Variance},
\begin{align*}
    S_{n}(h) = \sum_{k=0}^{n-1} h \circ g^{k}(x_{0}), \,\,
    A = \lim_{n \to \infty} \EXP_\mu\left(\frac{S_{n}(h)}{n}\right),\,\, \sigma =\sqrt{\lim_{n\to\infty} \EXP_{\mu}\left(\frac{S_{n}(h)-nA}{\sqrt{n}}\right)^2}.
\end{align*}
Notice that, {by \Cref{ContVar}}, $A$ is equal to the space average of $h$ with respect to $\nu_0$, i.e., $A=\nu_0(h).$ To approximate the distribution of the estimators of $A$, we develop bootstrap algorithms; specifically, to mimic the dependence structure of the dynamical system, in our algorithms we replace $g$ and $\mu$ by the transformation estimator $\wh g$ and the bootstrap initial distribution $\mu^{*}$, respectively. Below we detail the algorithms of the pivoted and  non-pivoted bootstrap in \Cref{sec:pivot} and \Cref{sec:nonpivot}, respectively. Afterwards, we will briefly describe the construction of $\wh g$ and $\mu^{*}$ in \Cref{sec:EstConstr} and discuss the theoretical suitability of these choices of $\wh g$ and $\mu^{*}$ in \Cref{sec:Suitability}.

\subsection{Pivoted bootstrap}\label{sec:pivot}

First, we state the pivoted bootstrap algorithm which is used when $\sigma$ is known or can be well-approximated. Its asymptotic accuracy is proved in \Cref{sec:AsympAccuPivot} and is $o_\text{a.s.}(n^{-1/2})$ in general. Theoretically, an accuracy of $\cO_\text{a.s.}(n^{-1})
$ is possible but we need stronger spectral assumptions to achieve this. 
\vs

{\footnotesize
\normalem
\begin{algorithm}[H]
	\caption{Pivoted Bootstrap}
	\label{alg:pivot}
	\DontPrintSemicolon
	\SetAlgoLined
	\KwIn{\parbox[t]{0.39\linewidth}{Data $x_{0}, \dots, x_{n-1}$ \\
							$h$: observable function\\
							$\wh g$: estimator of transformation $g$\\
							$\wh{\sigma}^{2}$: estimator of long-run variance $\sigma^{2}$ \\
							}
        	 \parbox[t]{0.45\linewidth}{
        	 $\alpha$: significance level, e.g., $5\%$\\
        	 $B$: number of bootstrap iterations\\
        	 $\mu^{*}$: bootstrap initial distribution \\
    }}
    \BlankLine
    \KwOut{$1-\alpha$ confidence interval of $A$} 
	\BlankLine
	$S_{n}(h) \leftarrow \sum_{i=0}^{n-1} h(x_{i})$\;
	$\wh{\sigma} \leftarrow \wh{\sigma}(x_{0}, \dots, x_{n-1}) $\; 
	\BlankLine
	\For{$b \leftarrow 1$ \KwTo $B$}{
		\BlankLine
		Sample $x_{0}^{*,b}\sim \mu^{*}$\; \label{lin:bootInit_pivoted}
		\BlankLine
		\For{$k \leftarrow 1$ \KwTo $n-1$}{
		\BlankLine
		$x_{k}^{*,b} \leftarrow \wh g^{k}(x_{0}^{*,b})$\;
		\BlankLine
		}
	    $S_{n}^{*,b}(h) \leftarrow \sum_{i=0}^{n-1} h(x_{i}^{*,b})$ \;
	    $\wh{\sigma}^{*,b}\leftarrow \wh{\sigma}(x_{0}^{*,b}, \dots, x_{n-1}^{*,b})$\; 
	}
	\BlankLine
	$\bar{S}_{n}^{*}(h)  \leftarrow \frac{1}{B}\sum_{b=1}^{B}S_{n}^{*,b}(h)$ \;
	\For{$b \leftarrow 1$ \KwTo $B$}{
	$T_{n}^{*,b} \leftarrow \frac{S_{n}^{*,b}(h)-\bar{S}_{n}^{*}(h)}{\sqrt{n}\wh{\sigma}^{*,b}}$ \label{lin:T_pivoted}
	}
	$F_{n}^{*}(x) \leftarrow \frac{1}{B}\sum_{b=1}^{B}{\bf 1}\{T_{n}^{*,b}\leq x\}$
	
	\Return{$\left\{A\in \reals \,\Big|\,\frac{S_{n}(h)-nA}{\sqrt{n}\wh{\sigma}}\in \big((F_{n}^{*})^{-1}(\alpha/2), (F_{n}^{*})^{-1}(1-\alpha/2)\big)\right\}$.}
\end{algorithm}
\ULforem 
}

\subsection{Non-pivoted bootstrap}\label{sec:nonpivot}

Next, we present the non-pivoted bootstrap algorithm to be used when $\sigma$ is unknown and cannot be approximated reliably. We will see in \Cref{sec:AsympAccuNonpivot} that, in general, its asymptotic accuracy is $o_\text{a.s.}(1)$. However, an accuracy of $\cO_\text{a.s.}(n^{-1/2})$ is theoretically possible. Under some reasonable assumptions, we provide an example with accuracy $\cO_\text{a.s.}(n^{-1/3})$.
\vs

{\footnotesize
\normalem
\begin{algorithm}[H]
	\caption{Non-pivoted Bootstrap}
	\label{alg:nonpivot}
	\DontPrintSemicolon
	\SetAlgoLined
	\KwIn{\parbox[t]{0.39\linewidth}{Data $x_{0}, \dots, x_{n-1}$ \\
								 $h$: observable function\\
							 $\wh g$: estimator of transformation $g$\\
							}
        	 \parbox[t]{0.45\linewidth}{
        	 $\alpha$: significance level, e.g., $5\%$\\
        	 $B$: number of bootstrap iterations\\
        	 $\mu^{*}$: bootstrap initial distribution \\
    }}
    \BlankLine
    \KwOut{$1-\alpha$ confidence interval of $A$} 
	\BlankLine
	$S_{n}(h) \leftarrow \sum_{i=0}^{n-1} h(x_{i})$\;
	\BlankLine
	\For{$b \leftarrow 1$ \KwTo $B$}{
		\BlankLine
		Sample $x_{0}^{*,b}\sim \mu^{*}$\; \label{lin:bootInit_non_pivoted}
		\BlankLine
		\For{$k \leftarrow 1$ \KwTo $n-1$}{
		\BlankLine
		$x_{k}^{*,b} \leftarrow \wh g^{k}(x_{0}^{*,b})$\;
		\BlankLine
		}
	    $S_{n}^{*,b}(h) \leftarrow \sum_{i=0}^{n-1} h(x_{i}^{*,b})$ \;
	}
	\BlankLine
	$\bar{S}_{n}^{*}(h)  \leftarrow \frac{1}{B}\sum_{b=1}^{B}S_{n}^{*,b}(h)$ \;
	\For{$b \leftarrow 1$ \KwTo $B$}{
	$T_{n}^{*,b} \leftarrow \frac{S_{n}^{*,b}(h)-\bar{S}_{n}^{*}(h)}{\sqrt{n}}$\label{lin:T_non_pivoted}
	}
	$F_{n}^{*}(x) \leftarrow \frac{1}{B}\sum_{b=1}^{B}{\bf 1}\{T_{n}^{*,b}\leq x\}$
	\BlankLine
	
	\Return{$\left\{A\in \reals\,\Big|\, \frac{S_{n}(h)-nA}{\sqrt{n}}\in \big((F_{n}^{*})^{-1}(\alpha/2), (F_{n}^{*})^{-1}(1-\alpha/2)\big)\right\}$.}
\end{algorithm}
\ULforem 
}
\vs 

\begin{rem} \label{rem:truebootexp}
Since we would like to approximate the distribution of the rescaled version of $S_{n}(h)-nA$,
theoretically, we want to in Line \ref{lin:T_pivoted} of Algorithm \ref{alg:pivot} and Line \ref{lin:T_non_pivoted} of Algorithm \ref{alg:nonpivot} subtract ${n}A^*$ from $S_{n}^{*,b}(b)$, where $A^*$ is defined in \eqref{eq:bootQuantities}. However, it is in general difficult to find the closed form of $A^*$. So, we substitute ${n}A^*$ with $\bar{S}_{n}^{*}(h)$. Indeed, when both $B$ and $n$ are large
\[
\bar{S}_{n}^{*}(h) \approx
\EXP^{*} \left(S_{n}^{*}(h)\right) \approx {n}A^{*}.
\]
\end{rem}

\section{Asymptotic Accuracy of the Bootstrap} \label{sec:Accu}

Let $\EXP^*$ be the expectation conditional on data $x_{0},\dots, x_{n-1}$.
Let $\mu^{*}$ be a random initial distribution depending on $x_{0},\dots, x_{n-1}$ and $n^{*}$ be the size of the bootstrap data we generated (from an original data of size $n$). Let the rescaled summation, the spatial average, and the long-run variance for the bootstrap sample be defined by, 
\begin{equation}\label{eq:bootQuantities}
\begin{aligned}
S_{n^{*}}^{*}(h)= \sum_{k=0}^{n^{*}-1} h \circ \wh g^{k}(x_{0}^{*}),\,\,
A^*= \lim_{n^{*} \to \infty} \EXP_{\mu^{*}}^*\left(\frac{S_{n^{*}}^{*}(h)}{n^{*}}\right),\,\,\sigma^* =\sqrt{\lim_{n^{*}\to\infty} \EXP_{\mu^{*}}^*\left(\frac{S_{n^{*}}^{*}(h)-n^{*}A^*}{\sqrt{n^{*}}}\right)^2},
\end{aligned}
\end{equation}
respectively, with the limit ${n^{*} \to \infty}$ indicating the almost-sure limit as $n^{*}$ goes to infinity. 
By \Cref{ContVar}, when \Cref{asm:beta} and \Cref{asm:gamma}(1) hold, 
we have that $A^*$ and $\sigma^{*}$ do not depend on the choice of $\mu^{*}$, $A^*-A = o_{\text{a.s.}}(1)$, and $\sigma^{*}-\sigma = o_{\text{a.s.}}(1)$.

To be consistent with Algorithms \ref{alg:pivot} and \ref{alg:nonpivot}, from now on we set $\mu^{*}$ as the bootstrap initial distribution and $n^*=n$. Also, to
be consistent with our usual notation, treat $g$ as $g_0$, $h$ as $h_0$, $\sigma$ as $\sigma_0$, $A$ as $A_0$, $\wh g$ as $g_{\theta_n}$, $\mu^{*}$ (with density $\rho_{\mu^*}$) as $\mu_{\theta_n}$, 
$A^*$ as $A_{\theta_n}$, and $\sigma^*$ as $\sigma_{\theta_n}$, 
where $\{\theta_n\}$ is a sequence such that $\textstyle \lim_{n\to \infty}\theta_n=0$ and $n$ is the sample size.
Since we consider the asymptotics as the sample size becomes large, i.e., $n \to \infty$, we have $\theta_n \to 0$ automatically. So, we can apply results from \Cref{part:EdgeExp} about $g_{\theta}$, $\mu_{\theta}$, $A_{\theta}$, and $\sigma_{\theta}$ when $\theta\to 0$.
\subsection{Pivoted bootstrap}\label{sec:AsympAccuPivot}
Suppose that the asymptotic variance $\sigma^2$ is known. Then let 
\begin{equation}\label{eq:pivot1}
    T_n = \frac{1}{\sqrt{n}\sigma}\left(S_n(h)- nA\right) 
\end{equation}
Using Algorithm \ref{alg:pivot}, we generate bootstrap samples $\{x^*_k\}_{0 \leq k \leq n}$. Here, the bootstrap estimator of $T_n$ is 
\begin{equation}\label{eq:pivot1est}
    T^*_n = \frac{1}{\sqrt{n} \sigma^*}\left(S_n^*(h)- n A^*\right). 
\end{equation}

\begin{rem}\label{rem:VarEst}
In Algorithm \ref{alg:pivot}, we have used $\hsigma$, the estimator based on the original data, and $\hsigma^{*}$, which is based on the bootstrap data. This makes the algorithms more generic and more applicable. So, in simulations we replace $\sigma$ by $\hsigma$ and $\sigma^{*}$ by $\hsigma^{*}$ in \eqref{eq:pivot1} and \eqref{eq:pivot1est}, respectively. 
\end{rem}

\begin{thm}\label{thm:PivotedBootstrap}
Suppose that, almost surely, 
there exists $N$ such that the family $\{\wh g\,|\, n\geq N\} \cup \{g\}$ satisfies the Assumptions \ref{asm:alpha}, \ref{asm:beta}, \ref{asm:gamma}, \ref{asm:delta} and \ref{asm:ve}. Let $\cC$ be a space in \eqref{SpaceChain} such that $\cC \hookrightarrow \cX_3 \hookrightarrow \cX^{(+)}_3$ and assume that 
$\|\rho_{\mu^*} -\rho_{\mu}\|_{\cC} \to 0$ as $n \to \infty$. 
Then, 
$$\sup_{z \in \reals }\Big|\Prob_{\mu}(T_n \leq z) - \Prob 
(T^*_n \leq z\,|\,x_{0},\dots, x_{n-1})\Big| = o_{\text{a.s.}}(n^{-1/2}).$$
\end{thm}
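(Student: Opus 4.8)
The plan is to deduce the statement directly from the continuous first‑order Edgeworth expansion, \Cref{thm:CtsEdgeExp}, applied once to the original statistic $T_n$ and once to the bootstrap statistic $T^*_n$, and to exploit that the two resulting expansions share \emph{the same} polynomial. First I would fix the almost‑sure event $E$ on which there is $N$ such that $\{\wh g\,:\,n\ge N\}\cup\{g\}$ satisfies the Assumptions \ref{asm:alpha}, \ref{asm:beta}, \ref{asm:gamma}, \ref{asm:delta} and \ref{asm:ve} and on which $\|\rho_{\mu^*}-\rho_\mu\|_\cC\to 0$; everything below takes place on $E$. Following the identifications at the start of \Cref{sec:Accu}, I write $g=g_0$, $h=h_0$, $\mu=\mu_0$, $\wh g=g_{\theta_n}$, $\mu^*=\mu_{\theta_n}$ with $\theta_n\to 0$. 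Since the observable is not perturbed, \Cref{asm:delta} is automatic (\Cref{rem:Mult}), so \Cref{thm:CtsEdgeExp}, \Cref{rem:IntialMeasure} and \Cref{ContVar} are all in force; in particular $\sigma_0>0$, and $A^*=A_{\theta_n}\to A$, $\sigma^*=\sigma_{\theta_n}\to\sigma$.

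Next I would record the two expansions. For $T_n$, \Cref{thm:CtsEdgeExp} applied with the single initial distribution $\mu$ produces a quadratic polynomial $P$, namely \eqref{eq:EdgePoly} (equivalently \Cref{rem:EdgePoly}), with
\[
\sup_{z\in\reals}\Big|\Prob_\mu(T_n\le z)-\fN(z)-\frac{P(z)}{\sqrt n}\,\fn(z)\Big|=o(n^{-1/2}).
\]
For $T^*_n$, I would first observe that, conditionally on the data, the bootstrap statistic in \eqref{eq:pivot1est} is precisely $\big(S_{\theta_n,n}(h)-nA_{\theta_n}\big)/(\sigma_{\theta_n}\sqrt n)$ evaluated at a point drawn from $\mu_{\theta_n}$, so that
\[
\Prob\big(T^*_n\le z\mid x_0,\dots,x_{n-1}\big)=\Prob_{\mu_{\theta_n}}\!\Big(\frac{S_{\theta_n,n}(h)-nA_{\theta_n}}{\sigma_{\theta_n}\sqrt n}\le z\Big).
\]
Because $\|\rho_{\mu_{\theta_n}}-\rho_{\mu_0}\|_\cC\to 0$, \Cref{rem:IntialMeasure} (the version of \Cref{thm:CtsEdgeExp} allowing a family of initial measures) applies and yields the \emph{same} polynomial $P$ — its defining formula \eqref{eq:EdgePoly} involves only the spectral data of $\cL_0$ and $\rho_{\mu_0}=\rho_\mu$ — so that
\[
\sup_{z\in\reals}\Big|\Prob\big(T^*_n\le z\mid x_0,\dots,x_{n-1}\big)-\fN(z)-\frac{P(z)}{\sqrt n}\,\fn(z)\Big|=o_{\text{a.s.}}(n^{-1/2}),
\]
the passage from $o$ to $o_{\text{a.s.}}$ being legitimate since on $E$ the uniform spectral bounds underlying \Cref{WeakPert}, \Cref{StrongPert}, \Cref{lem:ProjBds} and \Cref{lem:CharExp} control the implied constants uniformly in $n$. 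Subtracting the two displays by the triangle inequality gives $\sup_z\big|\Prob_\mu(T_n\le z)-\Prob(T^*_n\le z\mid x_0,\dots,x_{n-1})\big|=o_{\text{a.s.}}(n^{-1/2})$, which is the claim.

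The step I expect to be the crux — and the one I would write out most carefully — is the matching of the two Edgeworth polynomials; this is exactly what the \emph{continuous} expansion is designed to provide, since there $P$ is independent of the perturbation parameter and specializing at $\theta=0$ returns the polynomial of the unperturbed system, so the $\cO(n^{-1/2})$ corrections cancel and only the $o(n^{-1/2})$ errors survive. The remaining ingredients — identifying the conditional bootstrap law with a Birkhoff‑sum law for $g_{\theta_n}$, transferring the hypotheses of \Cref{thm:CtsEdgeExp} and \Cref{rem:IntialMeasure} to the random family on $E$, and upgrading the uniform‑in‑$\theta$ error to an almost‑sure error — are essentially bookkeeping, though one should check that the bootstrap expansion is genuinely uniform in $z$ and hides no data‑dependence beyond membership in $E$.
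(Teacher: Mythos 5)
Your proposal is correct and follows essentially the same route as the paper's proof: apply \Cref{thm:CtsEdgeExp} (together with \Cref{rem:IntialMeasure}) once to $T_n$ with initial measure $\mu$ and once, almost surely, to the bootstrap family $\{\wh g\,|\,n\ge N\}\cup\{g\}$ with initial measures $\mu^*$, observe that both expansions share the common reference $\cE_n(z)=\fN(z)+P(z)\fn(z)/\sqrt n$ with the same $\theta$-independent polynomial $P$, and conclude by the triangle inequality. Your additional remarks on identifying the conditional bootstrap law with a Birkhoff-sum law for $g_{\theta_n}$ and on working on the almost-sure event $E$ are exactly the bookkeeping the paper leaves implicit.
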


\begin{proof}
From \Cref{thm:CtsEdgeExp}, we have
$$\cE_n(z)= \fN(z) +\frac{P(z)}{\sqrt{n}}\fn(z),\,\, z \in \reals$$ which is the first-order Edgeworth expansion for the dynamical system $g:X \to X$ with $h$ as the observable and $\mu$ as the initial measure. Under the assumptions, 
we apply \Cref{thm:CtsEdgeExp} along with \Cref{rem:IntialMeasure} to $\{\wh g\,|\, n\geq N\} \cup \{g\}$ to obtain 
$$\sup_{z\in \reals}\Big|\Prob\left(T^*_n\leq z\,|\, x_1,\dots, x_n\right) - \cE_n(z)\Big| =  o_{\text{a.s.}}(n^{-1/2}), \,\, n \to \infty.$$
By another application of \Cref{thm:CtsEdgeExp},
we have
\begin{align*}
    \sup_{z\in \reals} \Big|\Prob_{\mu}\left(T_n\leq z\right) - \cE_n(z)\Big| =\sup_{z\in \reals} \left|\Prob_{\mu}\left(\frac{S_{n}(h)-nA}{\sigma\sqrt{n}} \leq z\right) - \cE_n(z)\right|=  o(n^{-1/2}), \,\, n \to \infty.
\end{align*}
Finally, using the triangle inequality,
\begin{align*}
    \sup_{z\in \reals} &\Big|\Prob_{\mu}\left(T_n\leq z\right) - \Prob
    \left(T^*_n\leq z|\, x_1,\dots, x_n\right) \Big| \\ &\leq \sup_{z\in \reals} \Big|\Prob_{\mu}\left(T_n\leq z\right) - \cE_n(z)\Big| + \sup_{z\in \reals}\Big|\Prob
    \left(T^*_n\leq z|\, x_1,\dots, x_n\right) - \cE_n(z)\Big| = o_\text{a.s.}(n^{-1/2}), \,\, n \to \infty.
\end{align*}
\end{proof}

\begin{rem}\label{hUnkown}
When $h$ is unknown, we can replace $h$ in $T^*_n$ by a suitable estimator $\wh h$, take $\wh h$ as $h_{\theta_n}$, and denote the corresponding multiplication operator by $H_{\theta_n}$. Since \Cref{thm:CtsEdgeExp} established an Edgeworth expansion uniformly with respect to $h$, as long as $H_{\theta_n}$ satisfies \Cref{asm:delta}, \Cref{thm:PivotedBootstrap} still holds. Therefore, even when $h$ is unknown, we can still develop a bootstrap algorithm that is second-order efficient. 
\end{rem}

If $\cB \hookrightarrow L^2(m)$, then due to \Cref{rem:0avg}, the second condition of \Cref{asm:ve} is automatic. 
Hence, we have the following Corollary.
\begin{cor}\label{cor:PivotedBootstrap}
Suppose that $\cB \hookrightarrow L^2(m)$ 
and, almost surely, there exists $N$ such that the family $\{
\wh g\,|\, n\geq N\} \cup \{
g\}$ satisfies the Assumptions \ref{asm:alpha}, \ref{asm:beta}, \ref{asm:gamma}, and \ref{asm:delta} with $p_0 \geq 2$, 
and $h$ is not $g-$cohomologous to a constant in $L^2(m)$. Let $\cC$ is a space in \eqref{SpaceChain} such that $\cC \hookrightarrow \cX_3 \hookrightarrow \cX^{(+)}_3$ and assume that 
$\|\rho_{\mu^*} -\rho_{\mu}\|_{\cC} \to 0$ as $n \to \infty$. 
Then, 
$$\sup_{z \in \reals }\Big|\Prob_{\mu}(T_n \leq z) - \Prob(T^*_n \leq z \,|\,x_{0},\dots, x_{n-1})\Big| = o_{\text{a.s.}}(n^{-1/2}).$$
\end{cor}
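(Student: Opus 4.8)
The plan is to obtain this corollary as an immediate consequence of \Cref{thm:PivotedBootstrap}: the only hypothesis of that theorem not already among the standing assumptions here is \Cref{asm:ve} for the base system $(g_0,h_0)=(g,h)$, and the content of the proof is that this non-degeneracy condition follows from the three extra hypotheses ``$\cB\hookrightarrow L^2(m)$'', ``$p_0\geq 2$'', and ``$h$ is not $g$-cohomologous to a constant in $L^2(m)$''. Since \Cref{asm:ve} is imposed only on the base point of the family, I will not need to re-examine the perturbed maps $\wh g$; these enter only through Assumptions \ref{asm:alpha}, \ref{asm:beta}, \ref{asm:gamma} and \ref{asm:delta}, which are assumed outright.

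First I would dispose of \Cref{asm:ve}(1): it asserts that there is no $\ell\in L^2(m)$ and constant $c$ with $h=\ell\circ g-\ell+c$, which is verbatim the hypothesis that $h$ is not $g$-cohomologous to a constant in $L^2(m)$ (cf.\ \eqref{eq:CoBd}), so there is nothing to prove. For \Cref{asm:ve}(2) I would center the observable: replacing $h$ by $\wt h=h-\nu_0(h)$ leaves the cohomology class unchanged and only shifts the relevant statistic by a deterministic constant, so one may assume $\nu_0(h)=0$. Then \Cref{asm:gamma}(1) gives $\|\cL_0^n(h\rho_{\nu_0})\|_{\cB}\lesssim\kappa^n$, and since $\cB\hookrightarrow L^2(m)$ (consistent with, and in any case guaranteed by, $p_0\geq 2$, which makes $L^{p_0}(m)\hookrightarrow L^2(m)$ and hence $\cB\hookrightarrow\wt\cB\hookrightarrow L^{p_0}(m)\hookrightarrow L^2(m)$), we obtain $\sum_{n\geq 0}\|\cL_0^n(h\rho_{\nu_0})\|_{L^2}<\infty$. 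This is exactly the input to \Cref{rem:0avg}, which then yields that the Birkhoff sums $\{\sum_{k=0}^{n-1}h\circ g^k\}_n$ are bounded in $L^2(m)$ and therefore admit an $L^2(m)$-weakly convergent subsequence; that is \Cref{asm:ve}(2).

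Having verified \Cref{asm:ve} for $(g,h)$, the family $\{\wh g\mid n\geq N\}\cup\{g\}$ satisfies all the hypotheses of \Cref{thm:PivotedBootstrap}, while the remaining hypotheses of the corollary --- the existence of a space $\cC$ in \eqref{SpaceChain} with $\cC\hookrightarrow\cX_3\hookrightarrow\cX^{(+)}_3$ and $\|\rho_{\mu^*}-\rho_\mu\|_{\cC}\to 0$ --- are identical to those of that theorem. Applying \Cref{thm:PivotedBootstrap} then gives $\sup_{z\in\reals}\big|\Prob_\mu(T_n\leq z)-\Prob(T_n^*\leq z\mid x_0,\dots,x_{n-1})\big|=o_{\text{a.s.}}(n^{-1/2})$, as claimed. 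Because essentially every step is a citation, I do not anticipate a genuine obstacle; the one place that warrants care is the bookkeeping around the centering $h\mapsto h-\nu_0(h)$ in the verification of \Cref{asm:ve}(2) --- namely checking that this replacement is harmless both for the cohomology condition and for the conclusion of the theorem.
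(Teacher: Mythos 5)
Your proposal is correct and follows exactly the paper's route: the corollary is obtained from \Cref{thm:PivotedBootstrap} by noting that \Cref{asm:ve}(1) is verbatim the cohomology hypothesis and that \Cref{asm:ve}(2) follows from \Cref{rem:0avg} once $\cB\hookrightarrow L^2(m)$. The paper's own justification is precisely this one-line reduction, so there is nothing further to check.
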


\begin{rem}
We recall that in the iid setting, if the common distribution has three finite moments and is non-lattice, then by \cite[Chapter 3]{Hall}, we have 
$$\sup_{z \in \reals }\Big|\Prob_{\mu}(T_n \leq z) - \Prob(T^*_n \leq z \,|\,x_{0},\dots, x_{n-1})\Big| = o_{\text{a.s.}}(n^{-1/2}).$$
 Our \Cref{thm:PivotedBootstrap} and \Cref{cor:PivotedBootstrap} generalize this result where the existence of the first three finite moments follow from \Cref{asm:beta} and the distribution being non-lattice follows from \cref{asm:gamma}(4).
However, if we have stronger assumptions on the common distribution, we can say more. In particular, if, in the iid setting, the common distribution has four finite moments and satisfies Cram\'er's condition, 
 $$\limsup_{|t|\to \infty} |\EXP(e^{isX})|<1\,,$$
then the second order Edgeworth expansion exists, and it follows that, 
$$\sup_{z \in \reals }\Big|\Prob_{\mu}(T_n \leq z) - \Prob(T^*_n \leq z \,|\,x_{0},\dots, x_{n-1})\Big| = \cO_{\text{a.s.}}(n^{-1}).$$
See, for example, \cite[Section 3.3]{Hall}. So, in order to guarantee to improve the asymptotic accuracy of our bootstrap, we need stronger spectral assumptions that guarantee existence of at least four asymptotic moments and better control of the characteristic function away from the origin. We plan to explore this in a separate work.
\end{rem}

\subsection{Nonpivoted bootstrap}\label{sec:AsympAccuNonpivot} Suppose the asymptotic variance $\sigma^2$ is unknown and cannot be estimated easily. Then write 
\begin{equation}\label{eq:nonpivot1}
    T_n = \frac{1}{\sqrt{n}}\left(S_n(h) - nA\right).
\end{equation}
Using Algorithm \ref{alg:nonpivot}, we generate the bootstrap samples $\{x^*_k\}_{0 \leq k \leq n}$.
The bootstrap estimator of $T_n$ is given by 
\begin{equation}\label{eq:nonpivot1est}
    T^*_n = \frac{1}{\sqrt{n}}\left(S_{n}^{*}(h) - n A^*\right). 
\end{equation}

\begin{thm}\label{thm:NonPivotedBootstrap}
Suppose that, almost surely, there exists $N$ such that $\{\wh g\,|\, n\geq N\} \cup \{g\}$ satisfy the Assumptions \ref{asm:alpha}, \ref{asm:beta}, \ref{asm:gamma}, \ref{asm:delta}, and \ref{asm:ve}. Let $\cC$ is a space in \eqref{SpaceChain} such that $\cC \hookrightarrow \cX_3 \hookrightarrow \cX^{(+)}_3$ and assume that 
$\|\rho_{\mu^*} -\rho_{\mu}\|_{\cC} \to 0$ as $n \to \infty$.
Then, 
$$\sup_{z \in \reals }\big|\Prob_{\mu}(T_n \leq z) -\Prob(T^*_n \leq z\,|\,x_{0},\dots, x_{n-1})\Big| =\cO_{\text{a.s.}}(|\sigma^*-\sigma|)+\cO_{\text{a.s.}}(n^{-1/2}). 
$$
\end{thm}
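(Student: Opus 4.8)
The plan is to follow the same template as the proof of \Cref{thm:PivotedBootstrap}, but now keeping careful track of the fact that the non-pivoted statistic is not rescaled by $\sigma$, so the two Edgeworth expansions being compared are for random variables with \emph{different} (though asymptotically close) variances. First I would recall that under the stated assumptions, for $n$ large enough the family $\{\wh g \mid n \geq N\} \cup \{g\}$ satisfies all the hypotheses of \Cref{thm:CtsEdgeExp}, so that applying that theorem (together with \Cref{rem:IntialMeasure} for the bootstrap side, to handle the $\theta$-dependent initial measure $\mu^{*} = \mu_{\theta_n}$) gives, almost surely,
\begin{align*}
  \sup_{z\in\reals}\left|\Prob_{\mu}\!\left(\tfrac{S_n(h)-nA}{\sigma\sqrt{n}}\le z\right)-\cE_n(z)\right| &= o(n^{-1/2}),\\
  \sup_{z\in\reals}\left|\Prob\!\left(\tfrac{S_n^{*}(h)-nA^{*}}{\sigma^{*}\sqrt{n}}\le z\;\middle|\;x_0,\dots,x_{n-1}\right)-\cE_n(z)\right| &= o_{\text{a.s.}}(n^{-1/2}),
\end{align*}
where $\cE_n(z)=\fN(z)+P(z)\fn(z)/\sqrt{n}$ is the common expansion.

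Next I would change variables to pass from the pivoted to the non-pivoted statistics. Since $T_n = \sigma \cdot \tfrac{S_n(h)-nA}{\sigma\sqrt{n}}$ and $T_n^{*} = \sigma^{*}\cdot \tfrac{S_n^{*}(h)-nA^{*}}{\sigma^{*}\sqrt{n}}$, for any $z\in\reals$ we have $\Prob_\mu(T_n\le z)=\Prob_\mu\big(\tfrac{S_n(h)-nA}{\sigma\sqrt{n}}\le z/\sigma\big)$ and $\Prob(T_n^{*}\le z\mid\cdot)=\Prob\big(\tfrac{S_n^{*}(h)-nA^{*}}{\sigma^{*}\sqrt{n}}\le z/\sigma^{*}\mid\cdot\big)$. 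Applying the two displays above at the arguments $z/\sigma$ and $z/\sigma^{*}$ respectively, and using the triangle inequality, the difference $\sup_z|\Prob_\mu(T_n\le z)-\Prob(T_n^{*}\le z\mid\cdot)|$ is bounded by $o_{\text{a.s.}}(n^{-1/2})$ plus $\sup_z|\cE_n(z/\sigma)-\cE_n(z/\sigma^{*})|$. So the whole estimate reduces to controlling this last quantity.

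The main obstacle — and the source of the $\cO_{\text{a.s.}}(|\sigma^{*}-\sigma|)$ term — is exactly bounding $\sup_z|\cE_n(z/\sigma)-\cE_n(z/\sigma^{*})|$. Here I would use that $\cE_n = \fN + P\,\fn/\sqrt{n}$ where $\fN,\fn$ and the polynomial $P$ are fixed (independent of $n$ and $\theta$), and that $w\mapsto \fN(w)$ and $w\mapsto P(w)\fn(w)$ are globally Lipschitz on $\reals$ (indeed $\fN'=\fn$ is bounded, and $P\fn$ has bounded derivative since polynomials times the Gaussian density decay at infinity). Writing $z/\sigma - z/\sigma^{*} = z(\sigma^{*}-\sigma)/(\sigma\sigma^{*})$, a direct Lipschitz estimate does not immediately work because of the factor $z$; instead one splits into $|z|\le M$ and $|z|>M$. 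On $|z|\le M$ the Lipschitz bound gives $\cO(M|\sigma^{*}-\sigma|)$; for $|z|>M$ one uses that both $\cE_n(z/\sigma)$ and $\cE_n(z/\sigma^{*})$ are within $\cO(e^{-cM^2})$ of $\{0,1\}$ (same tail), using $\sigma,\sigma^{*}\in[\bar\sigma/2,2\tilde\sigma]$ eventually a.s. (from \Cref{ContVar} and $\sigma^{*}\to\sigma$ a.s.), so their difference is $\cO(e^{-cM^2})$ there. Optimizing, e.g.\ taking $M\sim\sqrt{|\log|\sigma^{*}-\sigma||}$, yields $\sup_z|\cE_n(z/\sigma)-\cE_n(z/\sigma^{*})| = \cO(|\sigma^{*}-\sigma|^{1-\epsilon})$; but in fact a cleaner bound suffices because $|z|\,\fn(z/\sigma)$ and $|z|\,|P(z/\sigma)|\fn(z/\sigma)$ are globally bounded (the extra factor $|z|$ is absorbed by Gaussian decay), so a mean value theorem applied to $w\mapsto\cE_n$ together with the bound $\sup_w (|w|+|w|^{k})\fn(w)<\infty$ gives directly $\sup_z|\cE_n(z/\sigma)-\cE_n(z/\sigma^{*})|\le C|\sigma^{*}-\sigma|$. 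Combining, $\sup_z|\Prob_\mu(T_n\le z)-\Prob(T_n^{*}\le z\mid\cdot)| = \cO_{\text{a.s.}}(|\sigma^{*}-\sigma|) + \cO_{\text{a.s.}}(n^{-1/2})$, as claimed. Finally I would note that since $|\sigma^{*}-\sigma| = o_{\text{a.s.}}(1)$ by \Cref{ContVar}, this in particular gives consistency $o_{\text{a.s.}}(1)$, matching the remark in \Cref{sec:AsympAccuNonpivot}.
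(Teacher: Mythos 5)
Your proposal is correct and follows essentially the same route as the paper: apply \Cref{thm:CtsEdgeExp} (with \Cref{rem:IntialMeasure} on the bootstrap side) to both pivoted statistics, change variables to $z/\sigma$ and $z/\sigma^{*}$, and reduce via the triangle inequality to bounding $\sup_z|\cE_n(z/\sigma)-\cE_n(z/\sigma^{*})|$. The only difference is that you spell out (via the mean value theorem and the boundedness of $|w|\fn(w)$ and $|w|P(w)\fn(w)$) why this last supremum is $\cO(|\sigma^{*}-\sigma|)$ uniformly in $z$, a point the paper asserts without detail; your version is if anything slightly sharper on the $P\fn/\sqrt{n}$ term.
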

\begin{proof}

Even though the Birkhoff sum $S_{n}^{*}(h)$ is not normalized by $\sigma^*$, due to uniformity in $x$ of we can still use the conclusion of \Cref{thm:CtsEdgeExp} to obtain
\begin{align*}
    \sup_{z \in \reals }\bigg|\Prob(T^*_n \leq z \, &|\,x_{0},\dots, x_{n-1}) - \cE_n\left(\frac{z}{\sigma^*}\right)\bigg| \\
    &= \sup_{z \in \reals }\left|\Prob\left(\frac{S_{n}^{*}(h)- n A^*}{\sigma^*\sqrt{n}} \leq \frac{z}{\sigma^*} \, \Big|\,x_{0},\dots, x_{n-1}\right) - \cE_n\left(\frac{z}{\sigma^*}\right)\right|\\ 
    &\leq \sup_{ x\in \reals} \left|\Prob \left(\frac{S^*_{n}(h)-nA^*}{\sigma^*\sqrt{n}} \leq x\, \Big |\, \,x_{0},\dots, x_{n-1}\right) - \cE_n\left(x\right) \right| = o_{\text{a.s.}}(n^{-1/2}).
\end{align*}
Similarly, we have 
$$
\sup_{z \in \reals }\left| \Prob_{\mu_{0}}(T_n \leq z) - \cE_n\left(\frac{z}{\sigma}\right) \right| = o(n^{-1/2}).
$$
As a result,
\begin{align*}
    \sup_{z \in \reals }\Big|\Prob_{\mu}(T_n \leq z) - \Prob&(T^*_n \leq z \, |\,x_{0},\dots, x_{n-1})\Big| \\ =
&\sup_{z \in \reals }\left|\cE_n\left(\frac{z}{\sigma}\right)-\cE_n\left(\frac{z}{\sigma^*}\right)\right|
+ o_{\text{a.s.}}(n^{-1/2})%= o_\text{a.s.}(1)
\end{align*}
Note that
\begin{align*}
    \cE_n\left(\frac{z}{\sigma}\right)-\cE_n\left(\frac{z}{\sigma^*}\right)  &= \frac{1}{\sqrt{2\pi} \sigma^*}\int_{-\infty}^z e^{-\frac{y^2}{2(\sigma^*)^2}}\, dy - \frac{1}{\sqrt{2\pi}  \sigma}\int_{-\infty}^z e^{-\frac{y^2}{2\sigma^2}}\, dy \\ &\phantom{aaaaaaaaaaaaaaa}+ \frac{1}{\sqrt{n}}\left(P\left(\frac{x}{\sigma^*}\right)\fn\left(\frac{x}{\sigma^*}\right) -P\left(\frac{x}{\sigma}\right)\fn\left(\frac{x}{\sigma}\right)\right).
\end{align*}
Since the first term above is $\cO_{\text{a.s.}}(|\sigma^*-\sigma|)$ and the second is $\cO_{\text{a.s.}}(n^{-1/2})$, together we have  $$\cO_{\text{a.s.}}(|\sigma^*-\sigma|)+\cO_{\text{a.s.}}(n^{-1/2}),$$
which dominates the $o_{\text{a.s.}}(n^{-1/2})$ term.
\end{proof}

\begin{rem} 
We recall that the non-pivoted bootstrap does not depend on the knowledge of $\sigma$ while the Gaussian approximation does (in fact, it is impossible without $\sigma$). 
However, by \Cref{thm:NonPivotedBootstrap}, provided that $|\sigma^* - \sigma|\lesssim n^{-1/2}$, the non-pivoted bootstrap achieves an asymptotic accuracy of $\cO_{\text{a.s.}}(n^{-1/2})$, which is equal to that of the Gaussian approximation. Hence, in a somewhat oracular way, the non-pivoted bootstrap ``knows'' $\sigma$ without paying any price.
\end{rem}
In general, we only know that $|\sigma^*-\sigma|=o_{\text{a.s}}(1)$.
In this case, by \Cref{rem:0avg},
we have the following corollary.
\begin{cor}
Suppose that, almost surely, the Assumptions \ref{asm:alpha}, \ref{asm:beta}, \ref{asm:gamma}, and \ref{asm:delta} hold with $p_0 \geq 2$, and $h$ is not $g-$cohomologous to a constant in $L^2(m)$. Let $\cC$ is a space in \eqref{SpaceChain} such that $\cC \hookrightarrow \cX_3 \hookrightarrow \cX^{(+)}_3$ and assume that
$\|\rho_{\mu^*} -\rho_{\mu}\|_{\cC} \to 0$ as $n \to \infty$. Then, 
$$\sup_{z \in \reals }\big|\Prob_{\mu}(T_n \leq z) -\Prob(T^*_n \leq z\,|\,x_{0},\dots, x_{n-1})\Big| = o_{\text{a.s.}}(1).$$
\end{cor}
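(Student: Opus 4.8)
The plan is to reduce the corollary to \Cref{thm:NonPivotedBootstrap} by checking two things: that the hypotheses of that theorem are met almost surely, and that the error term $\cO_{\text{a.s.}}(|\sigma^*-\sigma|)$ that appears there can be absorbed into $o_{\text{a.s.}}(1)$. First I would invoke \Cref{rem:0avg}: since $\cB \hookrightarrow L^2(m)$ (which is guaranteed once $p_0 \geq 2$, because $\cB \hookrightarrow \wt\cB \hookrightarrow L^{p_0}(m) \hookrightarrow L^2(m)$ on a probability space), the weak-convergence condition \Cref{asm:ve}(2) holds automatically, and the assumption that $h$ is not $g$-cohomologous to a constant in $L^2(m)$ is precisely \Cref{asm:ve}(1) written in the form \eqref{eq:CoBd}. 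Hence the full \Cref{asm:ve} holds, and together with the assumed \Cref{asm:alpha}, \ref{asm:beta}, \ref{asm:gamma}, \ref{asm:delta}, the family $\{\wh g \mid n \geq N\} \cup \{g\}$ satisfies all the hypotheses of \Cref{thm:NonPivotedBootstrap} on the almost-sure event where the assumptions hold (with $N$ as supplied).

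Next I would apply \Cref{thm:NonPivotedBootstrap} directly to get
$$\sup_{z \in \reals}\big|\Prob_{\mu}(T_n \leq z) - \Prob(T^*_n \leq z \mid x_0,\dots,x_{n-1})\big| = \cO_{\text{a.s.}}(|\sigma^*-\sigma|) + \cO_{\text{a.s.}}(n^{-1/2}).$$
The second term is already $o_{\text{a.s.}}(1)$. For the first term, I would recall from the discussion following \eqref{eq:bootQuantities} (which cites \Cref{ContVar}) that, under \Cref{asm:beta} and \Cref{asm:gamma}(1), $\sigma^* - \sigma = o_{\text{a.s.}}(1)$; indeed $\sigma^* = \sigma_{\theta_n}$ with $\theta_n \to 0$, and $\theta \mapsto \sigma_\theta$ is continuous at $0$ by \Cref{ContVar}, while the independence of $\sigma_\theta$ from the initial measure means this holds regardless of $\mu^*$. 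Therefore $\cO_{\text{a.s.}}(|\sigma^*-\sigma|) = o_{\text{a.s.}}(1)$, and combining the two contributions gives the claimed bound $o_{\text{a.s.}}(1)$.

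The proof is essentially a bookkeeping argument, so there is no serious obstacle; the only point requiring a little care is confirming that $\cB \hookrightarrow L^2(m)$ follows from $p_0 \geq 2$, so that \Cref{rem:0avg} applies as stated — but this is immediate from the space chain \eqref{TwoSpaceChain} together with the fact that on the probability space $(X,m)$ one has $L^{p_0}(m) \hookrightarrow L^2(m)$ when $p_0 \geq 2$. One should also note that \Cref{rem:0avg} is phrased for $g_0$ and the observable $h_0$, which here are $g$ and $h$, so the cohomology hypothesis is being checked exactly where it is needed.
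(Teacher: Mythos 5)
Your proposal is correct and follows exactly the route the paper intends: the paper derives this corollary by noting that \Cref{rem:0avg} supplies \Cref{asm:ve}(2) (via $\cB\hookrightarrow L^{p_0}(m)\hookrightarrow L^2(m)$ for $p_0\geq 2$ on a probability space), the cohomology hypothesis supplies \Cref{asm:ve}(1), and then \Cref{thm:NonPivotedBootstrap} combined with $|\sigma^*-\sigma|=o_{\text{a.s.}}(1)$ from \Cref{ContVar} gives the $o_{\text{a.s.}}(1)$ bound. No gaps.
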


\section{Application of the Bootstrap to our Examples}

In this section, we describe some of the standard techniques available to apply the Algorithms \ref{alg:pivot} and  \ref{alg:nonpivot} in the context of dynamical systems. 

\subsection{Choice of $\wh g$ and $\mu^{*}$}\label{sec:EstConstr} 
To approximate $g$, there are a two key methods: Either one can use (8), (9), (12), and (13) of \cite{NA}, or one can use standard spline approximations; which method is more appropriate depends on the context.

For the choice of $\mu^{*}$, one may use the kernel estimator of $\nu_{0}$ on \cite[page 22]{HS}, defined by
\[
\wh \rho_{\nu_0}(x)=\frac{1}{n \fb}\sum_{k=1}^{n}K\Big(\frac{x-x_{k}}{\fb}\Big), 
\]
where $K:\reals\to \reals$ is the density function for a standard Gaussian random variable and $\fb$ is a bandwidth. 
Alternatively, one can let $\mu^{*}$ (having density $\rho_{\mu^*}$) be a fixed distribution that depends on neither $x_0,\dots,x_{n-1}$ nor $n$.

\subsection{Suitability of $\wh g$ and $\mu^* $}\label{sec:Suitability}

Now, we discuss the suitability of the choices of $\wh g$ and $\mu^{*}$ to align with our abstract setting in \Cref{part:EdgeExp}, and hence, ensure the asymptotic accuracy of the bootstrap. We require the following two conditions to be satisfied.
\begin{itemize}[leftmargin=15pt]
 \item[--] Almost surely, there is $N$ such that $\{\wh g\,|\, n\geq N\} \cup \{g\}$ satisfy the Assumptions \ref{asm:alpha}, \ref{asm:beta}, and \ref{asm:gamma}.  
   \item[--] There exists a space $\cC$  in \eqref{SpaceChain} such that $\cC \hookrightarrow \cX_3 \hookrightarrow \cX^{(+)}_3$ and $\| \rho_{\mu^*} - \rho_{\mu}\|_{\cC} \to  0$ as $n \to \infty$. 
\end{itemize}

In both smooth expanding and peicewise expanding maps, the \Cref{asm:gamma}(1) holds. So, $g$ possesses a unique acip $\nu_0$ (with density $\rho_{\nu_0}$); moreover, $(g,\nu_0)$ is strong mixing, and hence, ergodic. As a result, $\nu_0-$almost every orbit of $g$ is dense in the support of $\nu_0$; cf.~\cite[p.~29]{walters}. So, $\nu_0-$almost surely, $x_0,\dots,x_{n-1},\dots$ is a dense orbit. Therefore, we can construct spline approximations $\wh g$. We have already shown that spline approximations of expanding maps satisfy the Assumptions \ref{asm:alpha}, \ref{asm:beta}, and \ref{asm:gamma}; see Sections \ref{sec:SmoothExp} and  \ref{sec:PiecewiseExp}, and hence, they are an ideal choice for our simulations.

When a kernel estimators that changes with the sample size $n$, the above suitability condition for stationary processes can be checked using ideas in \cite[Section 3]{Yu} where the convergence properties of of $\wh\rho_{\nu_0}$ to $\rho_{\nu_0}$ are discussed. 
In particular, if $\{X_n\}$ is stationary and $\beta-$mixing, and $\rho_{\nu_0}$ is sufficiently regular, then $\wh\rho_{\nu_0}$ and $\wh\rho^{\prime}_{\nu_0}$ converge to $\rho_{\nu_0}$ and $\rho^{\prime}_{\nu_0}$, respectively and the convergence is uniform. 
In particular, for $V-$geometrically ergodic Markov chains, which by definition are $\beta-$mixing, we have $\|\wh\rho_{\nu_0}-\rho_{\nu_0}\|_{L^\infty} \to  0$. 

Since the kernel estimators in dynamical systems and $\beta-$mixing setting share many common properties \cite{HS}, we conjecture that, in the dynamical systems examples in \Cref{sec:Exmp}, we also have the uniform convergence of $\wh\rho_{\nu_0}$ and $\wh\rho^{\prime}_{\nu_0}$ to $\rho_{\nu_0}$ and $\rho^{\prime}_{\nu_0}$, which implies $\|\wh\rho_{\nu_0}-\rho_{\nu_0}\|_{\BV} \to  0$. However, this is still an open problem. In general, it is interesting to study the convergence of derivatives of kernel estimators of acips of dynamical systems but this has not been pursued in the previous literature. 

When the bootstrap initial measure, $\mu^*$, is fixed, the only condition it should satisfy is being absolutely continuous, and we opt for this option.  

\subsection{Improved accuracy of the non-pivoted bootstrap}
Now, we explain why we our non-pivoted bootstrap algorithm exhibits better asymptotic accuracy than $o_{\text{a.s.}}(1)$ when applied to our examples in \Cref{sec:SmoothExp} and \Cref{sec:PiecewiseExp}.

\begin{exmp}[Piecewise $C^2$ expanding maps, a continuation of \Cref{sec:PiecewiseExp}]
Since we assume $x_0,\dots,x_{n-1}$ is a sufficiently dense partial orbit,
we may assume that the mesh size, $\theta$, of the spline approximation is $\cO(n^{-1})$. So, $$\|\rho_{\nu_\theta}-\rho_{\nu_0}\|_{L^1} \lesssim \|\Pi_\theta - \Pi_0\|_{\BV,L^1}\lesssim \|\cL_\theta - \cL_0\|_{\BV,L^1} \lesssim \sqrt{\theta}=n^{-1/2}$$ 
where $\nu_\theta$ is the unique acip of $g_\theta$ and the implied constants are independent of $\theta$. The estimates (from left to right) follow from the proof of \Cref{cor:CtsDensity}, the Cauchy integral representation of $\Pi_\theta$ given in \Cref{sec:Moments} and \eqref{eq:NormEstimatExp}, respectively.

We recall from \Cref{sec:Moments} that for all $k>0$, 
\begin{equation*}
   \frac{1}{k} \EXP_\mu(S_{\theta,k}(h))=A_\theta + \frac{1}{ik} m(\Pi^{(1)}_\theta \rho_\mu) + \cO(\kappa^k)
\end{equation*}
Since the convergence rate of $A_\theta \to A_0$ is independent of the choice of the initial measure, we write,
\begin{align*}
    A_\theta - A_0 
     &= \frac{1}{k}\left(\EXP_{\nu_\theta}(S_{k,\theta}(h)) - \EXP_{\nu_0}(S_{k,0}(h)) \right)+o(k^{-1})\\
     &= \frac{1}{k}\sum_{j=1}^{k-1} \left[ \EXP_{\nu_\theta}(h \circ g_\theta^j ) - \EXP_{\nu_0}(h \circ g_0^j)\right]+o(k^{-1})\\
     &= \frac{1}{k}\sum_{j=1}^{k-1} \left[ \EXP_{\nu_\theta}(h) - \EXP_{\nu_0}(h)\right]+o(k^{-1})\\
|A_\theta - A_0|
     &\leq \frac{1}{k}\sum_{j=1}^{k-1}\left|\int h(\rho_{\nu_\theta}-\rho_{\nu_0})\, dm\right|+o(k^{-1})\\
     &\leq  \|h\|_{L^\infty}\|\rho_{\nu_\theta}-\rho_{\nu_0}\|_{L^1} +o(k^{-1})
     \lesssim n^{-1/2}+o(k^{-1})
\end{align*}
Above, we use that $\nu_\theta$ is $g_\theta-$invariant. Next, choosing $k= \cO(n^{1/2})$, we can obtain the best possible rate of convergence of the asymptotic means:
$$|A^*-A| = |A_\theta - A_0| \lesssim n^{-1/2}.$$
Now, we focus on the rate of convergence of the variance. We recall that for all $k>0$, 
\begin{equation*}
    \frac{1}{k}\EXP_\mu([S_{\theta,k}(h_\theta)- k A_\theta]^2) = \sigma^2_\theta  -\frac{1}{k}m(\Pi^{(2)}_\theta \rho_\mu)  + \cO(\kappa^n)
\end{equation*}
Then,
\begin{align*}
   \sigma^2_\theta - \sigma^2_0 
    &= \frac{1}{k}\left(\EXP_{\nu_\theta}([S_{k,\theta}(h)-kA_\theta]^2)-\EXP_{\nu_0}([S_{k,0}(h)-kA_0]^2) \right)+ o(k^{-1})\\
    &=\frac{1}{k}\left(\EXP_{\nu_\theta}([S_{k,\theta}(h)]^2)-\EXP_{\nu_0}([S_{k,0}(h)]^2\right) -k [A^2_\theta - A^2_0]+o(k^{-1})
\end{align*}
Since, $|A^2_\theta - A^2_0| \lesssim |A_\theta-A_0|\lesssim n^{-1/2}$ and the second term is $\cO(kn^{-1/2})$.

To estimate the first term, note that, 
\begin{align*}
    |\EXP_{\nu_\theta}([S_{k,\theta}(h)]^2)-\EXP_{\nu_0}([S_{k,0}(h)]^2)|&=  \left|\sum_{j=0}^{k-1}\sum_{l=0}^{k-1} \EXP_{\nu_\theta}(h \circ g_\theta^j \cdot h \circ g_\theta^l) - \EXP_{\nu_0}(h \circ g_0^j \cdot h \circ g_0^l)\right|\\
    &= 2 \left|\sum_{j=1}^{k-1}(k-j)[\EXP_{\nu_\theta}(h \circ g_\theta^j \cdot h) - \EXP_{\nu_0}(h \circ g_0^j \cdot h)]\right|\\
    &\leq 2 \sum_{j=1}^{k-1}(k-j)\left|\EXP_{m}(h \cdot \cL^j_\theta(h\rho_{\nu_\theta})) - \EXP_{m}(h \cdot \cL^j_0(h\rho_{\nu_0}))\right|\\
    &\leq 2\|h\|_{L^\infty}\sum_{j=1}^{k-1}(k-j) \|\cL^j_\theta(h\rho_{\nu_\theta})-\cL^j_0(h\rho_{\nu_0})\|_{L^1},
\end{align*}
\begin{align*}
    \|\cL^j_\theta(h\rho_{\nu_\theta})-\cL^j_0(h\rho_{\nu_0})\|_{L^1} &\leq \|\cL^j_\theta(h(\rho_{\nu_\theta} - \rho_{\nu_0}))\|_{L^1}+\|(\cL^j_\theta-\cL^j_0)(h\rho_{\nu_0})\|_{L^1} \\
    &\leq \sup_{\theta, r} \|\cL^r_\theta\|_{L^1,L^1}\cdot \|h\|_{L^\infty} \|\rho_{\nu_\theta}-\rho_{\nu_0}\|_{L^1}+\|\cL^j_\theta-\cL^j_0\|_{\BV,L^1}\|h\rho_{\nu_0}\|_{\BV} \\
    &\lesssim n^{-1/2}+\|\cL^j_\theta-\cL^j_0\|_{\BV,L^1},
\end{align*}
and
\begin{align*}
    \|\cL^j_\theta-\cL^j_0\|_{\BV,L^1}
    = \sum_{r=0}^{j-1} \|\cL^r_\theta(\cL_\theta-\cL_0)\cL^{j-1-r}_0\|_{\BV,L^1}
    &\lesssim \sup_{\theta,r} \|\cL^r_\theta\|_{L^1,L^1} \|\cL_\theta-\cL_0\|_{\BV, L^1}\sum_{r=0}^{j-1}  \|\cL^{r}_0\|_{\BV,\BV}\\
     &\lesssim  j n^{-1/2},
\end{align*}
Therefore, we have 
$$|\EXP_{\nu_\theta}([S_{k,\theta}(h)]^2)-\EXP_{\nu_0}([S_{k,0}(h)]^2)| \lesssim k^3n^{-1/2}. $$
Finally,
\begin{align*}
   \sigma^2_\theta- \sigma^2_0\lesssim k^2n^{-1/2}+kn^{-1/2}+ o(k^{-1}),
\end{align*}
and choosing $k= \cO(n^{1/6})$, we can obtain the best possible rate of convergence of the standard deviations, 
$$|\sigma^*-\sigma|\lesssim |\sigma^2_\theta - \sigma^2_0| \lesssim n^{-1/6}.$$
So, in the case of piecewise expanding maps, we have the following improved asymptotic accuracy of the non-pivoted bootstrap.
$$
    \sup_{z \in \reals }\Big|\Prob_{\mu}(T_n \leq z) - \Prob(T^*_n \leq z \, |\,x_{0},\dots, x_{n-1})\Big| = \cO_\text{a.s.}(n^{-1/6}). 
$$
\end{exmp}

\begin{exmp}[Smooth expanding maps, a continuation of \Cref{sec:CubicPiecewise}] \label{eg:GaussianError}
As in the previous example, we assume that the mesh-size, $\theta$, of the spline approximation, $\mathfrak{s}$, obtained by $x_0,\dots,x_{n-1}$ is $\cO(n^{-1})$. Since $$\|g-\mathfrak{s}\|_{L^\infty}+\|g'-\mathfrak{s}'\|_{L^\infty} =\cO(\theta^2)+\cO(\theta) = \cO(n^{-1}),$$
from \Cref{sec:SmoothExp}, we have that 
$$\|\cL_\theta - \cL_0\|_{\BV,L^1} \lesssim n^{-1}.$$
Arguing as in the previous example, for all $k>0$,
$$|A^*-A| = |A_\theta - A_0| \lesssim n^{-1}+\cO(k^{-1}).$$
It is easy to see that choosing $k=\cO(n)$ gives the best possible rate of convergence $\cO(n^{-1})$. So, arguing as in the previous example,.
$$|\sigma^*-\sigma|\lesssim |\sigma^2_\theta - \sigma^2_0| \lesssim k^2n^{-1}+kn^{-1}+ o(k^{-1}),$$ and choosing $k=\cO(n^{1/3})$, the best possible rate of convergence is $\cO(n^{-1/3})$. 

Therefore, for smooth expanding maps of $\bbT$, we have we have the following improved asymptotic accuracy of the non-pivoted bootstrap.
$$
    \sup_{z \in \reals }\Big|\Prob_{\mu}(T_n \leq z) - \Prob(T^*_n \leq z \, |\,x_{0},\dots, x_{n-1})\Big| = \cO_\text{a.s.}(n^{-1/3}). 
$$
\end{exmp}

\section{Computer Simulation of the Bootstrap}\label{sec:simulationresults}
\subsection{Data generating processes}\label{sec:DGP}
We let the choices of sample size be $n= 25, 50, 100$ and take the doubling map, the logistic map, and the drill map below as the choices for the transformation function $g$.

\subsubsection{Doubling map} The doubling map also called the dyadic transformation is given by $$g(x)=2x\hspace{-6pt}\mod 1,\,\,\,\,\, x \in [0,1].$$
In general, $r$-adic transformations, including dyadic transformations, allow us to study the statistical properties of the digits of real numbers. Because of its simple yet chaotic nature, von Neumann proposed the doubling map as a random number generator \cite{von1963various}, and later, R\'enyi studied its statistical properties \cite{doubling}. In fact, it is an expanding (therefore, hyperbolic) map, exhibits sensitive dependence on initial conditions, has a unique exponentially mixing acip -- the Lebesgue measure $[0,1]$ -- and also, is a Bernoulli map. Further, it is a $C^\infty-$map of $\bbT$ with constant first derivative $=2 >1$, and hence, falls into the class of examples considered in \Cref{sec:SmoothExp}. 

One could also consider the doubling map as the following map of the binary expansions of data
\begin{equation}\label{eq:doublingBinary}
\sum_{j=1}^{\infty}w_{j}2^{-j} \mapsto \sum_{j=1}^{\infty}w_{j+1}2^{-j}.    
\end{equation}
In other words, the doubling map neglects the first binary digit but shift all the other digit to the left by one unit. Unfortunately, in computer software, the initial state $x_{0}$ only has a binary expansion up to a finite order $K$, e.g., $K=32$ when $x_{0}$ is a uniform random variable generated in R software; as a result of this round-off error, by \eqref{eq:doublingBinary}, after $K$ iterations the orbit of the simulated doubling map will always end up at zero \cite{gora1988computers}. To overcome this problem, we let $x_{i} = g(x_{i-1}) + 2^{-20} \eps_{i}$, where $\{\eps_{i}\}$ are independent \textit{Bernoulli}$(1/2)$ random variables conditional on $x_{i}\in [0,1]$. By the shadowing lemma, this perturbed orbit of $\{x_{i}\}$ stays uniformly close to an unperturbed orbit; see \cite[p.~18]{ott_2002}. 

\subsubsection{Drill map}
The motion of a rotary drill induces a piecewise expanding map of the interval, $g: [0,1]\to [0,1]$, defined as follows:
\begin{align*}
    \alpha &= \frac{\Lambda}{(\Lambda-1)}, \,\,\, q_{[\Lambda]-k}= \max\left\{0, \frac{1}{2} \cdot \frac{\Lambda-1-k}{\Lambda-1}\right\}, \ \ k = 1,\dots, [\Lambda],\\ 
    d_{\Lambda}(x) &= \alpha\left( k- \sqrt{k^{2}-\frac{k}{\alpha}(k+1-2x)} \right), \ \ q_{[\Lambda]-k} < x \leq q_{[\Lambda]-k+1}, \ k = 1,2,\dots, [\Lambda], \\
    g^{1/2}(x)&= x+d_{\Lambda}(x) \ \ \text{(mod $1$)},\,\,\, \text{and}\,\,\,\, g(x) = g^{1/2}(g^{1/2}(x)),
\end{align*}
where $[\Lambda]$ denote the integer part of $\Lambda$ and $\Lambda$ is a parameter indicating the influence of gravity on fluid motion. We set $\Lambda=3$ in our simulation and include the corresponding transformation function $g$ in \Cref{fig:drillTrans}. 
\begin{figure}[ht]
    \centering
    \includegraphics[scale = 0.5]{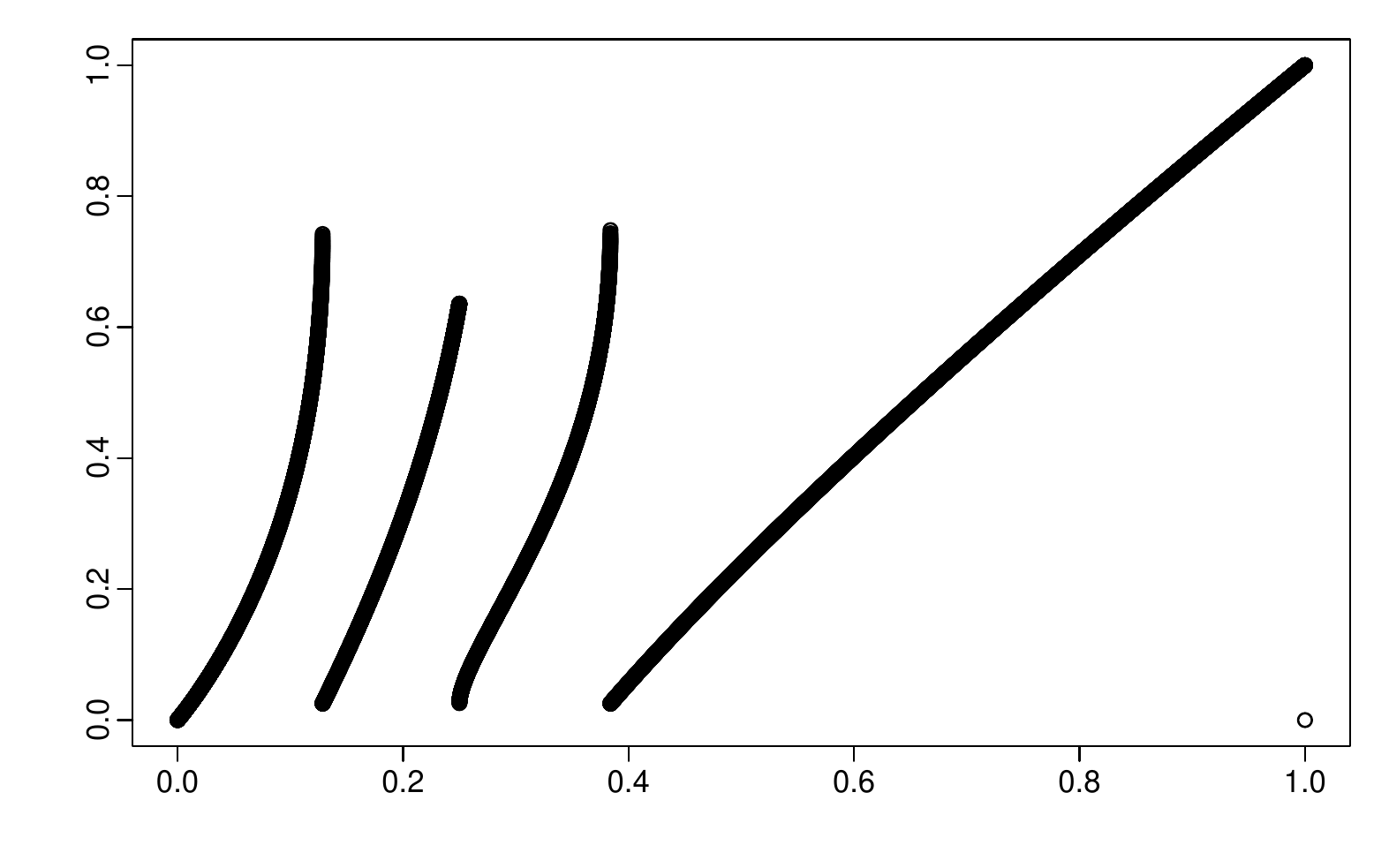}
    \vspace{-15pt}
    \caption{Drill map when $\Lambda=3$.}
    \vspace{-5pt}
    \label{fig:drillTrans}
\end{figure}

This map was first considered in \cite{lasota1974application} to model the movement of an oil drill with real world engineering applications in mind. This is a particular example of transformations discussed in \Cref{sec:PiecewiseExp}; see  also \cite[Section 1.2, Section 13.3]{Gora}.

\subsubsection{Logistic map} The logistic map is useful as a discrete-time population model of various biological species; see \cite{logistic} for an in-depth discussion. The logistic map with parameter $r\in (0,4]$ is defined by
$$g(x) = rx(1-x),\,\,\,\, x\in [0,1].$$
Here, we focus on the case $r=4$. It is well-known that $g$ with $r=4$ exhibits sensitive dependency to initial conditions and has a unique acip (despite not being hyperbolic); see p.~34--35 of \cite{ott_2002}. 

Even though $g$ does not belong to any of the examples in \Cref{sec:Exmp}, its simulations results are includes here because they were comparable to those of the  previous two examples. This is an indication that asymptotic accuracy of the bootstrap may hold even in the case of mostly hyperbolic maps considered in \cite{Young}. In fact, we believe that we can establish continuous Edgeworth expansions for such maps based on ideas in \cite{FernandoPene}. This will be pursued in the future.

Similar to the doubling map, the logistic map suffers from the round-off error of computer software. As a remedy, we first transform the logistic map to the tent map with $\mathcal{T}: [0,1] \to [0,1]$ defined by $\mathcal{T}(x) = 2\arcsin(\sqrt{x})/\pi$ and then add perturbation to the tent map; see p.~33 of \citep{ott_2002}. Specifically, we let
\begin{align*}
    y_{i-1} &= \mathcal{T}(x_{i-1}), &
    y_{i}   &= 
    \begin{cases}
    2y_{i-1}+2^{-20}\eps_{i}, & \text{for} \ x < 1/2, \\
    2(1-y_{i-1})+2^{-20}(1-\eps_{i}), & \text{for} \ x \geq 1/2,
    \end{cases} &
    x_{i} &= \mathcal{T}^{-1}(y_{i})
\end{align*}
where $\{\eps_{i}\}$ are independent \textit{Bernoulli}$(1/2)$ random variables conditional on $y_{i}\in [0,1]$. By the shadowing lemma, as in the doubling map context, the perturbed orbit $\{y_{i}\}$ stays uniformly close to some unperturbed orbit of the tent map, and in turn, $\{x_{i}\}$ shadows an orbit of the logistic map.

\subsection{Quantity of interest}\label{sub:quant_interest}
We aim to construct two-sided, upper-bounded, and lower-bounded 95\% confidence intervals for spatial average $A$. 
When constructing $A$, we let $h(x) = x$, $h(x)=x^{2}$, and $h(x)= x^{4}$; these choices of $h$ are closely related to the mean, variance, and kurtosis, which have wide applications in a variety of fields. 

\subsection{Bootstrap methods}

To construct the confidence intervals mentioned in \cref{sub:quant_interest}, we apply the pivoted bootstrap in Algorithm \ref{alg:pivot} and the non-pivoted bootstrap in Algorithm \ref{alg:nonpivot} with bootstrap iteration $B = 1000$. We include more details of these algorithms below.

\subsubsection{Estimation of the transformation $g$}
Throughout our simulation, we assume that the support and the location of discontinuities of transformation $g$ are known. Given this, we approximate the transformation function $g$ by piecewise cubic spline. When making extrapolations, we apply ``FMM'' \cite{forsythe1977computer} cubic splines in non-pivoted bootstrap and ``natural'' cubic splines in pivoted bootstrap. When the fitted value lies outside the given support, we take it to be the nearest boundary point of the support, and then, move it inside the support by adding or subtracting another small perturbation of $2^{-20}$. In our preliminary simulation, we used piecewise linear splines but found their performance to be inferior to that of the piecewise cubic splines. 

\subsubsection{
Generation of initial bootstrap data $x_{0}^{*}$
} In our simulation, we implement Line \ref{lin:bootInit_pivoted} of Algorithm \ref{alg:pivot} and Line \ref{lin:bootInit_non_pivoted} of Algorithm \ref{alg:nonpivot}, the generation of the initial bootstrap data $x_{0}^{*}$, as follows:
\vs

{\footnotesize
\normalem
\begin{algorithm}[H]
	\caption{Generation of initial bootstrap data}
	\label{alg:bootInitial}
	\DontPrintSemicolon
	\SetAlgoLined
    
	\KwIn{Data $x_{0}, \dots, x_{n-1}$}
    \BlankLine
    \KwOut{Initial bootstrap state $x_{0}^{*}$} 
	\BlankLine
	
	\While{$x_{0}^{*}\notin \text{support of} \ g$}{
	\BlankLine
	$ucv \leftarrow$ unbiased cross-validated bandwidth \cite{scott1987biased} \;
	\BlankLine
	$\fb \leftarrow ucv/4$ \;
	\BlankLine
	Sample $\eps_{0} \sim \text{Normal}(0, \fb^{2})$ \;
    \BlankLine
    Sample $x_{0}'$ from $\{x_{0},\dots,x_{n-1}\}$, independently of $\eps_{0}$\;
    \BlankLine
    $x_{0}^{*} \leftarrow x_{0}'+\eps_{0}$ \;
    \BlankLine
	}
	\BlankLine
	\Return{$x_{0}^{*}$}
\end{algorithm}
\ULforem 
}
\vs

\begin{rem}
Generating $x_{0}^{*}$ with Algorithm \ref{alg:bootInitial} is equivalent to generating $x_{0}^{*}$ from a kernel-estimated density function, where the kernel is Gaussian and the bandwidth is given by $\fb$; see \cite[p.~471]{SY}. On the other hand, Algorithm \ref{alg:bootInitial} is computationally more economical than directly generating data from the kernel-estimated density function; see the footnote on p.~1068 of \cite{HJ}.
\end{rem}

\begin{rem}
When the bandwidth is large, e.g., $\fb= ucv$, where $ucv$ is the \textit{unbiased cross-validated bandwidth} as in \cite{scott1987biased}, the bootstrap initial state could take a value that the true initial state rarely takes, e.g., 0.99. When starting with such a extreme bootstrap initial state, the bootstrap trajectory of $\{x_{i}^{*}\}$ could also be extreme, and consequently the bootstrap distribution may contain too many outliers. As a remedy, we let the bandwidth be rather small, i.e., $\fb= ucv/4$. 
\end{rem}

\subsubsection{Estimation of long-run variance $\sigma^{2}$}\label{sub:long_run_var}

Since we are not aware of any theoretically-justified estimator for $\sigma$ in the dynamical system setting, we first obtain the true $\sigma$ with Monte-Carlo simulation and then replace $\wh{\sigma}$ by this simulated true $\sigma$. Similarly, we replace $\wh{\sigma}^{*,b}$ by $\tilde{\sigma}^{*}$, where
$$\tilde{\sigma}^{*} \coloneqq \sqrt{\frac{1}{B}  \sum_{b=1}^{B}\Big(\frac{S_{n}^{*,b}(h)-\bar{S}_{n}^{*}(h)}{\sqrt{n}}\Big)^{2}}. $$
Indeed, in a similar way to Remark \ref{rem:truebootexp}, when both $B$ and $n$ are large, recalling $\sigma^{*}$ defined in \eqref{eq:bootQuantities},
\[
\tilde{\sigma}^{*}  \approx \sqrt{\EXP
^{*} \bigg(\Big(\frac{S_{n}^{*}(h)-\bar{S}_{n}^{*}(h)}{\sqrt{n}}\Big)^{2}\bigg)} \approx \sqrt{\EXP^{*} \bigg(\Big(\frac{S_{n}^{*}(h)-n A^*}{\sqrt{n}}\Big)^{2}\bigg)} \approx \sigma^{*}.
\]

\subsection{Non-bootstrap methods}

\subsubsection{Gaussian approximation}

To construct the confidence intervals in \cref{sub:quant_interest}, alternatively we can apply the Gaussian approximation, namely
\[
\frac{S_{n}(h)-nA}{\sqrt{n}\wh{\sigma}} \Rightarrow N(0,1).
\]
As in \Cref{sub:long_run_var}, since we do not know  any theoretically-justified estimator for $\sigma$ in the dynamical system setting, we replace $\wh{\sigma}$ by the simulated true $\sigma$ throughout our simulation.

\subsubsection{$t$-approximation}
To construct the confidence intervals, we can also apply a $t$-approximation, which relies on the presumption that
\[
\frac{S_{n}(h)-nA}{\sqrt{n}s} \Rightarrow t_{n-1},
\]
where $s$ is sample standard deviation of $\{x_{i}, i= 0,\dots, n-1\}$ and $t_{n-1}$ is a $t$-distribution with degrees of freedom $n-1$.

\subsection{Results}
To obtain the empirical coverage of the confidence intervals, we run 700 iterations. The results obtained are included in
\Cref{tab:two-sided_doubling,tab:two-sided_logistic,tab:two-sided_drill,tab:upper-bounded_doubling,tab:upper-bounded_logistic,tab:upper-bounded_drill,tab:lower-bounded_doubling,tab:lower-bounded_logistic,tab:lower-bounded_drill}.
When reporting the data, we use the following abbreviations
\begin{itemize}[leftmargin=25pt]
    \item[--] t ($t$-approximation), 
    \item[--] npboot (non-pivoted bootstrap), 
    \item[--] Gaussian (Gaussian approxiamtion), and
    \item[--] pboot (pivoted bootstrap)
\end{itemize}
to differentiate how the $95\%$ confidence intervals were generated. 

First, recall that the $t$-approximation and non-pivoted bootstrap do not require any prior knowledge of the long-run variance $\sigma^{2}$, while the Gaussian approximation and pivoted-bootstrap heavily depend on $\sigma^{2}$. From \Cref{tab:two-sided_doubling,tab:two-sided_logistic,tab:two-sided_drill,tab:upper-bounded_doubling,tab:upper-bounded_logistic,tab:upper-bounded_drill,tab:lower-bounded_doubling,tab:lower-bounded_logistic,tab:lower-bounded_drill}, we see that,  in general, the Gaussian approximation and pivoted-bootstrap prevail over the $t$-approximation and non-pivoted bootstrap, and hence, as expected, the knowledge of $\sigma^{2}$ improves the accuracy.

Second, the non-pivoted bootstrap significantly outperforms the $t$-approximation. In particular, the non-pivoted bootstrap does not suffer from under-coverage in case of  the doubling map and over-coverage in case of the logistic map whereas the $t$-approximation suffers from both. 
Indeed, the non-pivoted bootstrap has a performance that almost matches the Gaussian approximation and the pivoted bootstrap. Consequently, when $\sigma$ is unknown and cannot be easily estimated, the non-pivoted bootstrap may be preferable.

Third, the Gaussian approximation and pivoted-bootstrap give comparable results; none of them uniformly dominates the other. Since Gaussian approximation fails to capture the asymmetry of the finite-sample distribution, it performs slightly inferior to the pivoted-bootstrap on one-sided confidence intervals. However, since the under-coverage on one tail of the distribution is likely compensated by the over-coverage on the other tail, 
the Gaussian approximation has a slight advantage when two-sided confidence intervals are computed. In summary, when $\sigma$ is given, both the Gaussian approximation and pivoted-bootstrap can be considered.

\hypertarget{two-sided-confidence-interval}{%
\subsubsection{Two-sided confidence
interval}\label{two-sided-confidence-interval}}

\begin{center}
\begin{table}[H]
\begin{tabular}{cccccccccc}
\hline
         & \multicolumn{3}{c}{$h(x)=x$} & \multicolumn{3}{c}{$h(x)=x^{2}$} & \multicolumn{3}{c}{$h(x)=x^{4}$} \\ \cline{2-10} 
         & $n=25$   & $n=50$  & $n=100$  & $n=25$   & $n=50$  & $n=100$  & $n=25$   & $n=50$  & $n=100$  \\ \hline
t        & 0.753    & 0.740   & 0.749    & 0.729    & 0.709   & 0.760    & 0.717    & 0.729   & 0.747    \\ 
npboot   & 0.946         & 0.960        & 0.954         & 0.944         & 0.946        & 0.954         & 0.941         & 0.966        & 0.947         \\ 
Gaussian & 0.957         & 0.957       & 0.947         & 0.956         & 0.957        & 0.944         & 0.960         & 0.957        & 0.953         \\ 
pboot    & 0.940         &  0.940       & 0.941         & 0.964         & 0.959        & 0.949         & 0.940         & 0.960        & 0.936         \\ \hline
\end{tabular}
\caption{Empirical coverage of \textit{two-sided} $95\%$ confidence intervals generated 
when data is generated by the doubling map.}
\label{tab:two-sided_doubling}
\end{table}
\vspace{-20pt}
\end{center}

\begin{center}
\begin{table}[H]
\begin{tabular}{cccccccccc}
\hline
         & \multicolumn{3}{c}{$h(x)=x$} & \multicolumn{3}{c}{$h(x)=x^{2}$} & \multicolumn{3}{c}{$h(x)=x^{4}$} \\ \cline{2-10} 
         & $n=25$   & $n=50$  & $n=100$  & $n=25$   & $n=50$  & $n=100$  & $n=25$   & $n=50$  & $n=100$  \\ \hline
t        & 0.943    & 0.934   & 0.923    & 0.913    & 0.926   & 0.924    & 0.856    & 0.907   & 0.910    \\ 
npboot   & 0.937         & 0.951        & 0.947         & 0.940         & 0.941        & 0.946         & 0.893         & 0.924        & 0.941         \\ 
Gaussian & 0.956         & 0.954       & 0.953         & 0.947         & 0.959        & 0.960         & 0.951         & 0.963        & 0.953         \\ 
pboot    & 0.904         &  0.924       & 0.933         & 0.945         & 0.950        & 0.963         & 0.960         & 0.944        & 0.964         \\ \hline
\end{tabular}
\caption{Empirical coverage of \textit{two-sided} $95\%$ confidence intervals 
when data is generated by the drill map.}
\label{tab:two-sided_drill}
\end{table}
\vspace{-20pt}
\end{center}

\begin{center}
\begin{table}[H]
\begin{tabular}{cccccccccc}
\hline
         & \multicolumn{3}{c}{$h(x)=x$} & \multicolumn{3}{c}{$h(x)=x^{2}$} & \multicolumn{3}{c}{$h(x)=x^{4}$} \\ \cline{2-10} 
         & $n=25$   & $n=50$  & $n=100$  & $n=25$   & $n=50$  & $n=100$  & $n=25$   & $n=50$  & $n=100$  \\ \hline
t        & 0.946    & 0.940   & 0.957    & 0.990    & 0.996   & 0.994    & 1.000    & 1.000   & 1.000    \\ 
npboot   & 0.923         & 0.930        & 0.949         & 0.934         & 0.950        & 0.949         & 0.939         & 0.953        & 0.929         \\ 
Gaussian & 0.957         & 0.941       & 0.953         & 0.959         & 0.957        & 0.947         & 0.957         & 0.950        & 0.949         \\ 
pboot    & 0.960         &  0.939       & 0.957         & 0.949         & 0.940        & 0.943         & 0.963         & 0.946        & 0.964         \\ \hline
\end{tabular}
\caption{Empirical coverage of \textit{two-sided} $95\%$ confidence intervals 
when data is generated by the logistic map.}
\label{tab:two-sided_logistic}
\end{table}
\vspace{-20pt}
\end{center}

\hypertarget{upper-bounded-confidence-interval}{%
\subsubsection{Upper-bounded confidence
interval}\label{upper-bounded-confidence-interval}}

\begin{center}
\begin{table}[H]
\begin{tabular}{cccccccccc}
\hline
         & \multicolumn{3}{c}{$h(x)=x$} & \multicolumn{3}{c}{$h(x)=x^{2}$} & \multicolumn{3}{c}{$h(x)=x^{4}$} \\ \cline{2-10} 
         & $n=25$   & $n=50$  & $n=100$  & $n=25$   & $n=50$  & $n=100$  & $n=25$   & $n=50$  & $n=100$  \\ \hline
t        & 0.850    & 0.814   & 0.834    & 0.784    & 0.781   & 0.829    & 0.741    & 0.791   & 0.793    \\ 
npboot   & 0.936         & 0.947        & 0.957         & 0.944         & 0.947        & 0.961         & 0.930         & 0.966        & 0.944         \\ 
Gaussian & 0.949         & 0.963       & 0.953         & 0.970         & 0.969        & 0.949         & 0.980         & 0.976        & 0.966         \\ 
pboot    & 0.944         &  0.941       & 0.937         & 0.959         & 0.956        & 0.944         & 0.946         & 0.960        & 0.941         \\ \hline
\end{tabular}
\caption{Empirical coverage of \textit{upper-bounded} $95\%$ confidence intervals  
when data is generated by the doubling map.}
\label{tab:upper-bounded_doubling}
\end{table}
\vspace{-20pt}
\end{center}

\begin{center}
\begin{table}[H]
\begin{tabular}{cccccccccc}
\hline
         & \multicolumn{3}{c}{$h(x)=x$} & \multicolumn{3}{c}{$h(x)=x^{2}$} & \multicolumn{3}{c}{$h(x)=x^{4}$} \\ \cline{2-10} 
         & $n=25$   & $n=50$  & $n=100$  & $n=25$   & $n=50$  & $n=100$  & $n=25$   & $n=50$  & $n=100$  \\ \hline
t        & 0.939    & 0.930   & 0.924    & 0.903    & 0.903   & 0.910    & 0.820    & 0.871   & 0.871    \\ 
npboot   & 0.923         & 0.944        & 0.947         & 0.930         & 0.939        & 0.956         & 0.884         & 0.930        & 0.934         \\ 
Gaussian & 0.963         & 0.961       & 0.963         & 0.956         & 0.963        & 0.961         & 0.987         & 0.974        & 0.974         \\ 
pboot    & 0.931         &  0.937       & 0.930         & 0.958         & 0.936        & 0.940         & 0.956         & 0.954        & 0.966         \\ \hline
\end{tabular}
\caption{Empirical coverage of \textit{upper-bounded} $95\%$ confidence intervals 
when data is generated by the drill map.}
\label{tab:upper-bounded_drill}
\end{table}
\vspace{-20pt}
\end{center}

\begin{center}
\begin{table}[H]
\begin{tabular}{cccccccccc}
\hline
         & \multicolumn{3}{c}{$h(x)=x$} & \multicolumn{3}{c}{$h(x)=x^{2}$} & \multicolumn{3}{c}{$h(x)=x^{4}$} \\ \cline{2-10} 
         & $n=25$   & $n=50$  & $n=100$  & $n=25$   & $n=50$  & $n=100$  & $n=25$   & $n=50$  & $n=100$  \\ \hline
t        & 0.967    & 0.953   & 0.954    & 0.989    & 0.984   & 0.989    & 0.997    & 0.999   & 0.997    \\ 
npboot   & 0.939         & 0.926        & 0.966         & 0.941         & 0.963        & 0.950         & 0.953         & 0.954        & 0.941         \\ 
Gaussian & 0.946         & 0.933       & 0.944         & 0.946         & 0.934        & 0.950         & 0.929         & 0.924        & 0.936         \\ 
pboot    & 0.946         &  0.944       & 0.943         & 0.937         & 0.946        & 0.950         & 0.936         & 0.937        & 0.960         \\ \hline
\end{tabular}
\caption{Empirical coverage of \textit{upper-bounded} $95\%$ confidence intervals 
when data is generated by the logistic map.}
\label{tab:upper-bounded_logistic}
\end{table}
\vspace{-20pt}
\end{center}

\hypertarget{lower-bounded-confidence-interval}{%
\subsubsection{Lower-bounded confidence
interval}\label{lower-bounded-confidence-interval}}

\begin{center}
\begin{table}[H]
\begin{tabular}{cccccccccc}
\hline
         & \multicolumn{3}{c}{$h(x)=x$} & \multicolumn{3}{c}{$h(x)=x^{2}$} & \multicolumn{3}{c}{$h(x)=x^{4}$} \\ \cline{2-10} 
         & $n=25$   & $n=50$  & $n=100$  & $n=25$   & $n=50$  & $n=100$  & $n=25$   & $n=50$  & $n=100$  \\ \hline
t        & 0.813    & 0.826   & 0.830    & 0.861    & 0.843   & 0.840    & 0.899    & 0.857   & 0.874    \\ 
npboot   & 0.947         & 0.954        & 0.959         & 0.934         & 0.947        & 0.947         & 0.950         & 0.961        & 0.963         \\ 
Gaussian & 0.963         & 0.950       & 0.954         & 0.939         & 0.949        & 0.943         & 0.936         & 0.933        & 0.944         \\ 
pboot    & 0.951         &  0.941       & 0.954         & 0.957         & 0.959        & 0.957         & 0.951         & 0.954        & 0.946         \\ \hline
\end{tabular}
\caption{Empirical coverage of \textit{lower-bounded} $95\%$ confidence intervals 
when data is generated by the doubling map.}
\label{tab:lower-bounded_doubling}
\end{table}
\vspace{-20pt}
\end{center}

\begin{center}
\begin{table}[H]
\begin{tabular}{cccccccccc}
\hline
         & \multicolumn{3}{c}{$h(x)=x$} & \multicolumn{3}{c}{$h(x)=x^{2}$} & \multicolumn{3}{c}{$h(x)=x^{4}$} \\ \cline{2-10} 
         & $n=25$   & $n=50$  & $n=100$  & $n=25$   & $n=50$  & $n=100$  & $n=25$   & $n=50$  & $n=100$  \\ \hline
t        & 0.959    & 0.946   & 0.937    & 0.966    & 0.964   & 0.941    & 0.986    & 0.977   & 0.971    \\ 
npboot   & 0.984         & 0.960        & 0.947         & 0.960         & 0.944        & 0.946         & 0.957         & 0.944        & 0.947         \\ 
Gaussian & 0.951         & 0.957       & 0.939         & 0.936         & 0.947        & 0.946         & 0.931         & 0.943        & 0.924         \\ 
pboot    & 0.914         &  0.940       & 0.951         & 0.921         & 0.940        & 0.950         & 0.964         & 0.930        & 0.957         \\ \hline
\end{tabular}
\caption{Empirical coverage of \textit{lower-bounded} $95\%$ confidence intervals 
when the data is generated by the drill map.}
\label{tab:lower-bounded_drill}
\end{table}
\vspace{-20pt}
\end{center}

\begin{center}
\begin{table}[H]
\begin{tabular}{cccccccccc}
\hline
         & \multicolumn{3}{c}{$h(x)=x$} & \multicolumn{3}{c}{$h(x)=x^{2}$} & \multicolumn{3}{c}{$h(x)=x^{4}$} \\ \cline{2-10} 
         & $n=25$   & $n=50$  & $n=100$  & $n=25$   & $n=50$  & $n=100$  & $n=25$   & $n=50$  & $n=100$  \\ \hline
t        & 0.941    & 0.939   & 0.949    & 0.987    & 0.996   & 0.990    & 1.000    & 1.000   & 1.000    \\ 
npboot   & 0.934         & 0.951        & 0.939         & 0.954         & 0.950        & 0.953         & 0.933         & 0.950        & 0.941         \\ 
Gaussian & 0.961         & 0.949       & 0.956         & 0.971         & 0.970        & 0.960         & 0.973         & 0.973        & 0.969         \\ 
pboot    & 0.946         &  0.944       & 0.956         & 0.959         & 0.950        & 0.947         & 0.971         & 0.950        & 0.956         \\ \hline
\end{tabular}
\caption{Empirical coverage of \textit{lower-bounded} $95\%$ confidence intervals 
when data is generated by the logistic map.}
\label{tab:lower-bounded_logistic}
\end{table}
\vspace{-20pt}
\end{center}

\printbibliography

\end{document}